\renewcommand{\bi}[2]{\ensuremath{\cA\qp{#1,#2}}}
\renewcommand{\bih}[2]{\ensuremath{\cA_h\qp{#1,#2}}}
\newcommand{\ah}[2]{\ensuremath{\cA_h\qp{#1,#2}}}
\renewcommand{\vec}[1]{\geovec{#1}}
\renewcommand{\leb}[1]{\ensuremath{\LL^{#1}}}
\newcommand{\lift}{r_h^*}
\newcommand{\uhs}{u^*}
\newcommand{\uhss}{u^{**}}
\renewcommand{\D}{\vec{\mathfrak D}}
\numberwithin{equation}{section}
\renewcommand{\hoz}{\sobh1_0(\W)}
\crefname{hypothesis}{Hypothesis}{Hypotheses}
\author{
  Andreas Dedner
  \thanks{
        Mathematics Institute, University of Warwick, Coventry CV4 7AL UK.
    (\email{A.S.Dedner@warwick.ac.uk}).}
  \and
  Jan Giesselmann
  \thanks{  Department of Mathematics,  TU Darmstadt, 64293 Darmstadt, Germany.
  (\email{giesselmann@mathematik.tu-darmstadt.de}).}
  \and
  Tristan Pryer
  \thanks{
    Mathematical Sciences, University of Bath, Bath BS2 7AY, UK.
  Corresponding author. (\email{tmp38@bath.ac.uk}).}
  \and
  Jennifer K Ryan
  \thanks{  
  Applied Mathematics \& Statistics, Colorado School of Mines, Golden, Colorado 80401, USA.
  (\email{jkryan@mines.edu}).}
}
\title{Residual estimates for post-processors in elliptic problems}
\date{\today}
\begin{document}
\maketitle
\begin{abstract}
  In this work we examine {\it a posteriori} error control for post-processed
  approximations to elliptic boundary value problems.
  
  We introduce a class of post-processing operator that ``tweaks'' a
  wide variety of existing post-processing techniques to enable
  efficient and reliable {\it a posteriori} bounds to be proven. This
  ultimately results in optimal error control for all manner of
  reconstruction operators, including those that superconverge.

  We showcase our results by applying them to two classes of very
  popular reconstruction operators, the Smoothness-Increasing
  Accuracy-Conserving filter and Superconvergent Patch
  Recovery. Extensive numerical tests are conducted that confirm our
  analytic findings.
\end{abstract}

\section{Introduction}

Post-Processing techniques are often used in numerical simulations for
a variety of reasons from visualisation purposes \cite{benzley1995pre}
to designing superconvergent approximations \cite{BrambleSchatz:1977}
through to becoming fundamental building blocks in constructing
numerical schemes \cite{georgoulis2018recovered,GuoZhangZou:2018, ChenGuoZhangZou:2017}.  Another application
of these operators is that they are a very useful component in the
{\it a posteriori} analysis for approximations of partial differential
equations (PDEs) \cite{Ainsworth:2000,Verfurth:1996}. The goal of an
{\it a posteriori} error bound is to computationally control the error
committed in approximating the solution to a PDE.  In order to
illustrate the ideas, let $u$ denote the solution to some PDE and let
$u_h$ denote a numerical approximation.  Then, the simplest possible
use of post-processing in {\it a posteriori estimates} is to compute
some $\uhs$ from $u_h$ and to use
 \[ \Norm{u-u_h} \approx  \Norm{\uhs-u_h} \]
 as an error estimator.

However, a key observation here (and in several more sophisticated
approaches) is that $u^*$ must be at least as good of an approximation
of the solution $u$ as $u_h$ is. In fact, in many cases, $u^*$ is
actually expected to be a better approximation. This raises a natural
question: If an adaptive algorithm computes (on any given mesh) not
only $u_h$ but also $u^*$ and $u^*$ is a better approximation of $u$
than $u_h$ is, why is $u_h$ and not $u^*$ considered as the
``primary'' approximation of $u$?  Indeed, the focus of this paper is
to consider $u^*$ as the primary approximation of $u$.  We are
therefore interested in control of the error $\Norm{u-u^*}$ and in
adaptivity based on an a posteriori estimator for $\Norm{u-u^*}$.
Specifically, {\it we aim to provide reliable and efficient error
  control for $\Norm{u-u^*}$.}

Note that our goal is not to try to construct ``optimal'' superconvergent
post-processors.  Rather we try to determine, from the {\it a posteriori}
viewpoint, the accuracy of some given post-processed solution and to 
determine how this is useful for the construction of adaptive numerical schemes
based on an error tolerance for $u - u^*$.

There are several examples of superconvergent post-processors, 
includig SIAC and
superconvergent patch recovery. More details on the history, properties and implementation of these methods will be provided in Sections \ref{sec:siac} and \ref{sec:patchrecovery}. However, our a posteriori analysis,
aims at being applicable for a wide variety of 
post-processors and, therefore, we avoid special assumptions that are
only valid for specific post-processors. 
Indeed, our analysis makes only very mild assumptions on the
post-processing operator. Specifically, we only require that:
\begin{enumerate}
\item The post-processed solution $u^*$ belongs to a finite dimensional space
  that contains piecewise polynomials, although it does not necessarily
  need to be piecewise polynomial itself.
\item The post-processed solution should be piecewise smooth over the
  same triangulation, or a sub-triangulation, of the finite element
  approximation.
\end{enumerate}
Given a post-processor, $u^*$ that satisfies these rather mild
assumptions, we perturb it slightly and call the result $u^{**}$. This
is to ensure an orthogonality condition holds which then allows us to
show various desirable properties including:
\begin{enumerate}
\item The orthogonal post-processor provides a better approximation
  than the original post-processor, i.e. $\Norm{u-u^{**}}_{\cA_h} \leq
  \Norm{u-u^*}_{\cA_h}$ in the energy norm, see Lemma \ref{lem:uhssimpro}.
\item The orthogonal post-processor has an increased convergence order in the
  $\leb{2}$ norm. Practically, this is not always the case for the
  original post-processor, see Lemma \ref{lem:dual}.
\item Efficient and reliable {\it a posteriori} bounds are available for the
  error committed by the orthogonal post-processor.
\end{enumerate}

This, motivates us to consider $\uhss$ (and not $u^*$ or $u_h$) 
as the primary approximation. Since the improved accuracy of $u^{**}$, 
compared to $u_h$, stems from superconvergence it is much more sensitive 
with respect to smoothness of the exact solution, i.e. in regions where the
exact solution is $C^\infty$ we expect $u^{**}$ to be much more accurate than
$u_h$ whereas in places where the exact solution is less regular, e.g. has kinks, 
$u_h$ and $u^{**}$ are expected to have similar accuracy. Therefore, meshes constructed 
based on error estimators for $u-u_h$ will usually not be optimal when used for computing $u^{**}$
in the sense that the ratio of degrees of freedom to error $\| u- u_h\|$ would be much better for other meshes, this is elaborated upon in Section \ref{sec:numerics}.

We will demonstrate the good approximation properties of our modification strategy for post-processors and the benefits of basing mesh adaptation on an estimator for $\| u- u_h\|$
in  a series of numerical experiments. In order to highlight the versatility of our approach, we conduct experiments based on two popular post-processing techniques: The Smoothness Increasing Accuracy Conserving (SIAC) filter and superconvergent patch recovery (SPR). Background on these methods is provided in Sections \ref{sec:siac} and \ref{sec:patchrecovery} respectively.

The rest of the paper is structured as follows: In \S \ref{sec:setup} we
introduce the model elliptic problem and its dG approximation. We also recall
 some standard results for this method. In \S \ref{sec:recon}, for a
given reconstruction, we perturb it so it satisfies Galerkin
orthogonality  and show some {\it a priori} type results. We then
study {\it a posteriori} results and give upper and lower bounds for a
residual type estimator. In \S\ref{sec:recovery} we describe the two
families of post-processor that we consider in this work. Finally, in
\S\ref{sec:numerics} we perform extensive numerical tests on the SIAC
and SPR post-processors to show the performance of the {\it a
  posteriori} bounds, the effect of smoothness of the solution on the
post-processors and to study adaptive methods driven by these
estimators.

\section{Problem setup and notation}
\label{sec:setup}

Let $\Omega \subset \rR^d$, $d=1,2,3$ be bounded with Lipschitz
boundary $\partial \Omega$. We denote by $\leb{p}(\Omega)$, $p\in
[1,\infty]$, the standard Lebesgue spaces and $\sobh{s}(\Omega)$, the
Sobolev spaces of real-valued functions defined over $\Omega$. Further
we denote $\sobh1_0(\Omega)$ the space of functions in
$\sobh1(\Omega)$ with vanishing trace on $\partial \Omega$.

For $f \in \leb{2}(\Omega)$ we consider the problem
\begin{equation}\label{ellipt}
\begin{split}
  - \div\qp{\D \nabla u} &=f \text{ in } \Omega\\
  u&=0 \text{ on } \partial \Omega,
\end{split}
\end{equation}
where $\D:\W \to \reals^{d\times d}$ is a uniformly positive definite
diffusion tensor and $\D \in \qb{\sobh1(\W)\cap
  \leb{\infty}(\W)}^{d\times d}$. Weakly, the problem reads: find
$u\in \hoz$ such that
\begin{equation}
  \label{eq:abstract-prob}
  \bi{u}{v}
  :=
  \int_\Omega \D \nabla u \cdot \nabla v \d \vec x
  =
  \int_\Omega f v \d \vec x  \Foreach v\in \hoz.
\end{equation}

Let $\T{}$ be a triangulation of $\Omega$ into disjoint simplicial or
box-type (quadrilateral/hexahedral) elements $K\in \T{}$ such that
$\closure{\Omega}=\union{K\in\T{}} \closure{K}$. Let $\E$ be the set
of edges which we split into the set of interior edges $\cE_i$ and the
set of boundary edges $\cE_b$.

We introduce the standard broken Sobolev spaces. For $s \in \rN_0$ we
define
\begin{equation}
  \sobh{s}(\T{})
  :=
  \{ v \in \leb{2}(\Omega) \, : \, v|_K \in \sobh{s}(K) \, \Foreach K \in \T{}\},
\end{equation}
and we will use the notation
\begin{equation}
  \Norm{v}_{\sobh{s}(\T{})} = \qp{\sum_{K\in\T{}} \Norm{v}_{\sobh{s}(K)}^2}^{1/2}
\end{equation}
as an elementwise norm for the broken space.

For $p\in\naturals$ we denote the set of all polynomials over $K$ of
total degree at most $p$ by $\poly{p}(K)$. For $p \geq 1$, we consider
the finite element space
\begin{equation}
  \fes_h^p := \{ \phi \in \leb{2}(\Omega) : v|_K \in \poly{p}(K), K\in\T{} \}.
\end{equation}

Let $v\in\sobh{1}(\T{})$ be an arbitrary scalar function.  For any
interior edge $e \in \cE_i$ there are two adjacent triangles $K^-,
K^+$ and we can consider the traces $v^\pm$ of $v$ from $K^\pm$
respectively. We denote the outward normal of $K^\pm$ by $\vec
n^\pm$ and define average and jump operators for one $\cE_i$ by
\begin{equation}
  \begin{split}
    \avg{v}
    &:=
        {\frac12\qp{ v^+ + v^-}}:=  \frac12\qp{\lim_{s \searrow 0} v(\cdot + s \vec n^+) + \lim_{s \searrow 0} v(\cdot + s \vec n^-)}
        \\
        \jump{v}
        &:= {\qp{v^- \vec n^- + v^+\vec n^+}}:=  \lim_{s \searrow 0} v(\cdot + s \vec n^-) \vec n^- +\lim_{s \searrow 0} v(\cdot + s\vec n^+) \vec n^+.
  \end{split}
\end{equation}
For boundary edges there is only one trace of $v$ and one outward pointing normal vector $\vec n$ and we define
\begin{equation}
  \avg{v}:= v
  \qquad 
  \jump{v} := v \vec n.
\end{equation}
For vector valued functions $\vec v\in[\sobh{1}(\T{})]^d$ we define 
jumps and averages on interior edges by
\begin{gather}
  \avg{\vec v} :=   \frac12 \vec v^+ + \frac12 \vec v^-
  ,
 \quad
  \jump{\vec v} := {\qp{\vec v^- \cdot \vec n^- +  \vec v^+ \cdot \vec n^+}}
\end{gather}
As before, for boundary edges, we define jumps and averages using
traces from the interior only. Note that $\jump{\vec v},\avg{v} \in \leb{2}(\E)$ and $\jump{v}, \avg{\vec v}\in [\leb{2}(\E)]^d$.

For any triangle $K \in \T{}$ we define $h_K:= \diam{K}$ and collect
these values into an element-wise constant function $h:\Omega\to
\reals$ with $h|_K = h_K$. We denote the radius of the largest ball
inscribed in $K$ by $\rho_K$. For every edge $e$ we denote by $h_e =
\avg{h}$, i.e., the mean of diameters of adjacent triangles. For our
analysis we will assume that $\T{}$ belongs to a family of
triangulations which is quasi-uniform and shape-regular. Let us
briefly recall the definitions of these two notions: The triangulation
$\T{}$ is called
\begin{itemize}
\item shape-regular if there exists $C>0$ so that 
  \begin{equation}
    h_K < C \rho_K \Foreach K \in \T{}
\end{equation}
\item quasi-uniform if there exists $C>0$ so that 
  \begin{equation}
    \max_{K \in \T{}} h_K < C h_K    \Foreach K \in \T{}.
\end{equation}
\end{itemize}
Note that for shape-regular triangulations we have inverse and trace
inequalities \cite[Lemmas 1.44, 1.46]{Di-PietroErn:2012}. Note that
the quasi-uniformity assumption is only required for the first part of
our analysis, in \S \ref{sec:apriori} and can be relaxed in \S
\ref{sec:apost}.


In this work we will consider a standard interior penalty method to
approximate solutions of \eqref{eq:abstract-prob}. We consider the
Galerkin method to seek $u_h \in \fes_h^p$ such that
\begin{equation}
  \label{eq:galerkin-method}
  \bih{u_h}{v_h} = \int_\W f v_h\d \vec x \Foreach v_h\in \fes_h^p,
\end{equation}
where $\cA_h: \sobh{2}(\T{}) \times \sobh{2}(\T{}) \rightarrow \rR$ is given by 
\begin{equation}\label{eq:ipdg}
  \bih{u}{v}= \int_{\T{}} \D\nabla u \cdot \nabla v
  -
  \int_\cE \jump{v} \cdot \avg{\D\nabla u}
  -
  \int_\cE \jump{u} \cdot \avg{\D\nabla v}
  +
  \int_\cE  \sigma h_e^{-1} \jump{u} \cdot \jump{v} 
\end{equation}
Note that the bilinear form \eqref{eq:ipdg} is stable provided $\sigma
= \sigma(\D)$ is large enough, see
\cite{ArnoldBrezziCockburnMarini:2001}.

\begin{remark}[Continuous Galerkin methods]\label{rem:vh}
  Note that if we restrict test and trial functions to $\fes_h^p \cap
  \sobh{1}(\W)$ then all jumps on interior edges vanish and
  \eqref{eq:galerkin-method}, \eqref{eq:ipdg} reduces to a
  (continuous) finite element method with weakly enforced boundary
  data. Our analysis is equally valid in this case.
\end{remark}
We introduce two dG norms
\begin{equation}
  \begin{split}
    \Norm{v}_{dG}^2
    &:=
    \Norm{\nabla v}_{\leb{2}(\T{})}^2 + \Norm{h_e^{-\frac{1}{2}} \jump{v}}_{\leb{2}(\cE)}^2
    \\
    \Norm{v}_{\cA_h}^2
    &:=
    \bih{v}{v},
  \end{split}
\end{equation}
which are equivalent provided $\sigma >0$ is sufficiently large and
conclude this section by stating a-priori estimates for the Galerkin
method as is standard in the literature
\cite{ArnoldBrezziCockburnMarini:2001,KarakashianPascal:2003}.

\begin{theorem}[Error bounds for the dG approximation]
  \label{the:error-est}
  Let $u\in\sobh{s}(\W)$ for $s\geq 2$ be the solution of
  (\ref{ellipt}) and $u_h\in\fes_h^p$ be the unique solution to the
  problem (\ref{eq:galerkin-method}). Then,
  \begin{equation}
    \Norm{u-u_h}_{\leb{2}(\W)} + h\Norm{u-u_h}_{dG}
    \leq
    C_1 h^{\min\,\qp{p+1,s}} \norm{u}_{\sobh{s}(\W)}.
  \end{equation}
  Further, for $u\in\sobh{1}(\W)$, we have the {\it a posteriori} error
  bound
  \begin{equation}\label{eq:primalerror_dg}
    \Norm{ u - u_h}_{dG} \leq 
    C_2 R_h
    :=
    C_2
    \qp{
      \sum_{K\in\T{}}
      \qp{
        \eta_K^2
        +
        \frac{1}{2} \sum_{e \in \partial K} \eta_e^2
      }
    }^{\frac{1}{2}},
  \end{equation}
  where
  \begin{equation}
    \begin{split}
      \eta_K^2 &:= \Norm{h_K ( f + \div\qp{\D\nabla u_h})}_{\leb{2}(K)}^2
      \\
      \eta_e^2
      &:=
      \Norm{h_e^{\frac{1}{2}} \jump{\D\nabla u_h}}_{\leb{2}(e)}^2
      +
      \Norm{h_e^{-\frac{1}{2}} \jump{ u_h}  }_{\leb{2}(e)}^2.
    \end{split}
  \end{equation}
  and $C_1$ is a constant depending on the shape-regularity and
  quasi-uniformity constants of $\T{}$ and $C_2$ depends only upon the
  shape-regularity. Here $R_h$ is a computable residual that we refer
  to during our numerical simulations.
\end{theorem}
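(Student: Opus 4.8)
The plan is to prove the two assertions separately, since they rest on different machinery: the a priori bound follows from coercivity, consistency and interpolation, whereas the a posteriori bound requires a conforming/nonconforming splitting of the error. For the a priori estimate I would first record the two structural properties of $\cA_h$ that underpin every interior penalty analysis. Taking $\sigma=\sigma(\D)$ large enough, the form is coercive on $\fes_h^p$, so that $\Norm{v_h}_{dG}^2 \leq C\,\bih{v_h}{v_h}$ for all $v_h\in\fes_h^p$, and it is bounded on $\sobh{2}(\T{})+\fes_h^p$ with respect to a slightly stronger norm $\Norm{\cdot}_{\widetilde{dG}}$ that additionally controls the normal fluxes on edges. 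Because $u\in\sobh{s}(\W)$ with $s\geq 2$ has no interior jumps and a single-valued flux, the scheme is consistent, which gives Galerkin orthogonality $\bih{u-u_h}{v_h}=0$ for all $v_h\in\fes_h^p$. A standard Str\"ang/C\'ea argument then yields $\Norm{u-u_h}_{dG}\leq C\inf_{v_h\in\fes_h^p}\Norm{u-v_h}_{\widetilde{dG}}$, and inserting a quasi-interpolant of $u$ together with elementwise polynomial approximation and trace inequalities bounds the right-hand side by $C h^{\min(p+1,s)-1}\norm{u}_{\sobh{s}(\W)}$, which is exactly the $h\Norm{u-u_h}_{dG}$ contribution.

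For the $\leb{2}$ part I would run an Aubin--Nitsche duality argument: let $z$ solve the adjoint problem with data $u-u_h$, invoke elliptic regularity to obtain $z\in\sobh{2}(\W)$, and exploit orthogonality against a quasi-interpolant of $z$ to harvest the additional factor of $h$. The main subtlety here is that optimal $\leb{2}$ convergence needs $\sobh{2}$ elliptic regularity of the dual problem; with only a Lipschitz boundary this is where the tacit smoothness assumptions on $\W$ and on $\D$ enter, and the stated order should be read under the hypothesis that makes the dual solution $\sobh{2}$. This also explains why $C_1$ depends on the quasi-uniformity constant, whereas the a posteriori constant $C_2$ will not.

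For the a posteriori bound the essential step is to separate the nonconformity of $u_h$ from its residual. I would introduce a conforming companion $u_h^c\in\fes_h^p\cap\hoz$ built by an averaging (Oswald-type) operator and split $u-u_h=(u-u_h^c)+(u_h^c-u_h)$. The nonconforming remainder is controlled directly by the jump seminorm, $\Norm{u_h^c-u_h}_{dG}^2\leq C\sum_{e\in\cE}\Norm{h_e^{-\frac12}\jump{u_h}}_{\leb{2}(e)}^2$, which is precisely the jump contribution appearing in $\eta_e$. For the conforming error, writing $e^c=u-u_h^c\in\hoz$, I would test the exact weak form \eqref{eq:abstract-prob} with $e^c$, subtract the discrete equation, integrate by parts elementwise, and use Galerkin orthogonality against a Cl\'ement/Scott--Zhang quasi-interpolant of $e^c$ to localise the residual. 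This produces the interior residual $f+\div\qp{\D\nabla u_h}$ on each $K$ and the flux jump $\jump{\D\nabla u_h}$ on each $e$; Cauchy--Schwarz together with scaled approximation and trace inequalities then reproduces $\eta_K$ and the flux part of $\eta_e$, with constants depending only on shape regularity, so that no quasi-uniformity is required.

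The hardest part is the nonconforming bound and the averaging operator itself: one must construct $u_h^c$ so that it both lies in $\hoz$ and satisfies the jump estimate above with a shape-regularity-only constant, and then carefully balance the element- against edge-scaled weights so that the final estimator groups cleanly into the stated $\eta_K$ and $\eta_e$. Once this splitting and the averaging estimate are in place, the remainder of the argument is routine bookkeeping with local inverse and trace inequalities.
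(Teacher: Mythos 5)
Your proposal is correct and follows exactly the standard route: the paper does not prove this theorem itself but cites the classical literature (Arnold--Brezzi--Cockburn--Marini for the coercivity/consistency/duality a priori argument, Karakashian--Pascal for the a posteriori bound via a conforming companion), and your sketch --- C\'ea-type bound in an augmented $\widetilde{dG}$ norm plus Aubin--Nitsche, then the Oswald-averaging splitting with Cl\'ement interpolation yielding $\eta_K$ and $\eta_e$ with shape-regularity-only constants --- is precisely those proofs. Your remarks on where quasi-uniformity and dual $\sobh{2}$ regularity enter (only in $C_1$, not $C_2$) are also consistent with the theorem's stated constant dependencies.
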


\section{The orthogonal reconstruction, {\it a priori} and {\it a posteriori} error estimates}
\label{sec:recon}

In this section, we derive robust and efficient error estimates. We
make the assumption that we have access to a computable
reconstruction, $\uhs \in \fes_h^* \subset \sobh{2}(\T{})$ generated
from our numerical solution $u_h$, where
 $\fes_h^*$ is required to contain the original finite element space,
 that is $\fes_h^p \subset \fes_h^*$. We are
unable to provide reliable {\it a posteriori} error estimates for $\uhs$ directly, but we can
modify, and, as we shall demonstrate, improve any such reconstruction
such that a robust and efficient error estimate can be obtained for
the modified reconstruction.

We split this section into two parts, the first subsection contains
the definition of the improved reconstruction and some of its
properties. In particular, we study this from an {\it a priori}
viewpoint, show that it satisfies Galerkin orthogonality as well as
some desirable {\it a priori} bounds. Throughout this subsection we
assume that $u\in\sobh2(\W)$ and the underlying mesh is
quasi-uniform. In the second part we derive reliable and efficient
{\it a posteriori} estimates under the weaker assumption that
$u\in\sobh1(\W)$ and the mesh is shape regular.

\subsection{Improved reconstruction}
\label{sec:apriori}

In the following assume that $u\in\sobh2(\W)$ solves
\eqref{eq:abstract-prob} and
let $\uhs\in\fes_h^* \subset \sobh2(\T{})$ be a reconstruction of the
discrete solution $u_h$, e.g. a SIAC reconstruction as described in Section
\ref{sec:siac} or obtained by some patch recovery operator as described in
Section \ref{sec:patchrecovery}.

\begin{definition}\label{def:nrec}
  Let $R: \sobh{2}(\T{}) \rightarrow \fes_h^p$ denote the Ritz
  projection with respect to $\ah{\cdot}{\cdot}$, i.e.,
  \begin{equation}
    \ah{Rv}{\phi_h} = \ah{v}{\phi_h} \Foreach \phi_h \in \fes_h^p.
  \end{equation}

  We define the improved reconstruction as
  \begin{equation}\label{eq:nrec}
    \uhss := \uhs - R\uhs + u_h \in \fes_h^*
  \end{equation}
\end{definition}

\begin{remark}
  We make the following remarks:
 \begin{enumerate}
 \item
   The finite element approximation from \eqref{eq:galerkin-method}
   satisfies $u_h = R u$.
 \item
   {We work under the assumption that a post-processor
     $\uhs$ is already being computed. To realise $\uhss$ we are
     required to solve the original elliptic problem a second time
     with a different forcing term. This means the improved
     reconstruction $\uhss$ is computable at a small additional cost
     to $\uhs$. Once $\uhs$ has been computed, $\uhss$ can be computed
     by solving a discrete elliptic problem over $\fes_h^p$.  A
     typical scenario is that the user already has a good scheme for
     computing $u_h$, and that the cost of computing $u^{**}$ (after
     the post-processing to obtain $u^*$) is just that of solving the
     same system as that for $u_h$ with a different right hand
     side. This means the assembly and preconditioning, perhaps ILU or
     AMG, can be reused without change.
   }

{
  Estimating the cost of computing $u^*$ is more complicated and will
  depend on the method used and the implementation. While our
  implementation for solving $u_h$ and the correction are optimized (and
  implemented in C++) the computation of $u^{**}$ is a proof of concept
  implementation in Python and is therefore not competitive. 
}

   The improved reconstruction $\uhss$ is computable at a small
   additional cost to $\uhs$. Once $\uhs$ has been computed, $\uhss$
   can be computed by solving a discrete elliptic problem over
   $\fes_h^p$.  A typical scenario is that the user already has a good
   scheme for computing $u_h$, and that the cost of computing $u^{**}$
   (after the post-processing to obtain $u^*$) is just that of solving
   the same system as that for $u_h$ with a different righthand side,
   e.g if an $LU$ decomposition of the system matrix was determined
   for computing $u_h$ this $LU$ decomposition can be reused.
 \item
   Note that
   \begin{equation} \label{eq:id_R}
     u - \uhss =  u - u_h - \uhs + R \uhs = (id - R)( u - \uhs),
   \end{equation}
   {where $id$ is the identity mapping,}
   i.e., the error of $ \uhss $ is the Ritz-projection of the error of
   $\uhs $ onto the orthogonal complement of $\fes_h^p$.
 \item Even if $\uhs$ is continuous, this does not  necessarily hold for $\uhss$
   as $\fes_h^p$ may contain discontinuous functions.
   \end{enumerate}
  \end{remark}
     
One of the key properties of the improved reconstruction is that it
satisfies a Galerkin orthogonality result.
  \begin{lemma}[Galerkin orthogonality]\label{lem:uhssortho}
 The reconstruction $\uhss$ from \eqref{eq:nrec} satisfies Galerkin orthogonality, i.e.,
 \begin{equation}\label{eq:uhssortho}
 \ah{u - \uhss}{ v_h} =0 \quad \forall v_h \in \fes_h^p. 
 \end{equation}

\end{lemma}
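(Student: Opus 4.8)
The plan is to verify the Galerkin orthogonality directly from the definition of $\uhss$ and the defining property of the Ritz projection $R$. The key algebraic observation, already recorded in equation \eqref{eq:id_R}, is that $u - \uhss = (\mathrm{id} - R)(u - \uhs)$. Since the orthogonality is to be tested against arbitrary $v_h \in \fes_h^p$, I would compute $\ah{u - \uhss}{v_h}$ and show every contribution cancels.

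First I would write
\begin{equation*}
  \ah{u - \uhss}{v_h} = \ah{(\mathrm{id} - R)(u - \uhs)}{v_h} = \ah{u - \uhs}{v_h} - \ah{R(u - \uhs)}{v_h}.
\end{equation*}
The defining property of the Ritz projection in Definition \ref{def:nrec} states that $\ah{Rw}{\phi_h} = \ah{w}{\phi_h}$ for all $\phi_h \in \fes_h^p$; applying this with $w = u - \uhs$ and $\phi_h = v_h$ shows the two terms on the right are equal, so their difference vanishes. This immediately yields $\ah{u - \uhss}{v_h} = 0$ for all $v_h \in \fes_h^p$, which is exactly \eqref{eq:uhssortho}.

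Alternatively, and perhaps more transparently, I would expand $\uhss$ using its definition \eqref{eq:nrec} as $\uhss = \uhs - R\uhs + u_h$ together with the first remark that $u_h = Ru$. Then $u - \uhss = (u - \uhs) - R(u - \uhs)$, and testing against $v_h$ and invoking the Ritz property gives the same cancellation. A subtle point worth noting is that for the expression $\ah{u}{v_h}$ to make sense one needs $u \in \sobh{2}(\T{})$, which is guaranteed under the standing assumption $u \in \sobh{2}(\W)$ of this subsection, so that the bilinear form $\cA_h$ is well defined on the relevant arguments.

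I do not anticipate a genuine obstacle here: the statement is essentially a restatement of the defining property of $R$ once equation \eqref{eq:id_R} is in hand. The only thing to be careful about is ensuring the two formulations of $R$ (as acting on the error $u - \uhs$ versus on $u$ and $\uhs$ separately by linearity) are consistent, which follows from the linearity of the Ritz projection and of the bilinear form.
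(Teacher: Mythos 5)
Your proposal is correct and follows the paper's own argument exactly: the paper likewise invokes the identity $u - \uhss = (id - R)(u - \uhs)$ from \eqref{eq:id_R} and concludes by the defining property of the Ritz projection. Your additional remarks on linearity and the $\sobh{2}(\W)$ regularity needed for $\cA_h$ to be well defined are sound elaborations of what the paper leaves implicit.
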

\begin{proof}
For any $v_h \in \fes_h^p$, we have
using \eqref{eq:id_R}
  \begin{equation}
    \begin{split}
      \ah{u - \uhss}{ v_h}
      &= \ah{ (id - R)( u - \uhs) }{v_h}
      =0
    \end{split}
  \end{equation}
 by definition of the Ritz projection, as required.
\end{proof}

Now, we show that with respect to $\Norm{\cdot}_{\cA_h}$ the new
reconstruction $\uhss$ indeed improves upon $\uhs$:
\begin{lemma}[Better approximation of the improved reconstruction]\label{lem:uhssimpro}
  Let $\uhss$ be defined by \eqref{eq:nrec}, then the following holds:
  \begin{equation}\label{better}
    \Norm{ u - \uhss}_{\cA_h}
    \leq \Norm{ u - \uhs}_{\cA_h}.
  \end{equation}
  In \eqref{better} the inequality is an equality if and only if
  $\uhss=\uhs$, i.e., if the original
  reconstruction $\uhs$ itself satisfies Galerkin
  orthogonality.
\end{lemma}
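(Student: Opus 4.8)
The plan is to read off \eqref{better} from an orthogonal decomposition of the error, exploiting that $\ah{\cdot}{\cdot}$ is a genuine inner product on $\sobh2(\T{})$. Its symmetry is immediate from \eqref{eq:ipdg}, and its positive definiteness follows from the coercivity guaranteed by taking $\sigma$ large enough, so that $\Norm{\cdot}_{\cA_h}$ is precisely the norm induced by $\ah{\cdot}{\cdot}$ and a Pythagorean identity is at our disposal. All three quantities $u$, $\uhs$ and $\uhss$ lie in $\sobh2(\T{})$, so the form is defined on the relevant differences.

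First I would split the error of the original reconstruction as $u - \uhs = (u - \uhss) + (\uhss - \uhs)$ and observe that the correction term is discrete: since $u_h = Ru$, definition \eqref{eq:nrec} gives $\uhss - \uhs = u_h - R\uhs = R(u - \uhs) \in \fes_h^p$. By the Galerkin orthogonality of Lemma \ref{lem:uhssortho}, the first summand $u - \uhss$ is $\ah{\cdot}{\cdot}$-orthogonal to every element of $\fes_h^p$, and in particular to $\uhss - \uhs$. Hence the cross term vanishes and Pythagoras yields
\begin{equation*}
  \Norm{u - \uhs}_{\cA_h}^2 = \Norm{u - \uhss}_{\cA_h}^2 + \Norm{\uhss - \uhs}_{\cA_h}^2,
\end{equation*}
from which \eqref{better} follows at once by discarding the nonnegative final term.

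For the equality characterisation I would note that equality in \eqref{better} forces $\Norm{\uhss - \uhs}_{\cA_h} = 0$, and since $\Norm{\cdot}_{\cA_h}$ is a genuine norm this is equivalent to $\uhss = \uhs$. As $\uhss - \uhs = R(u - \uhs)$, this is in turn equivalent to $R(u - \uhs) = 0$, which, by coercivity and the definition of the Ritz projection $R$, holds precisely when $\ah{u - \uhs}{v_h} = 0$ for all $v_h \in \fes_h^p$, i.e. exactly when $\uhs$ already satisfies Galerkin orthogonality. The computations are entirely routine; the only point that genuinely needs care, and which I regard as the crux, is confirming that $\ah{\cdot}{\cdot}$ really is a symmetric positive definite form on the space containing $u$, $\uhs$ and $\uhss$, so that both the Pythagorean identity and the definiteness invoked in the equality case are legitimate. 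This rests on the symmetry of \eqref{eq:ipdg} together with the standing assumption that $\sigma$ is large enough to render the form coercive.
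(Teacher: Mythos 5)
Your proof is correct and takes essentially the same route as the paper: your splitting $u-\uhs = (u-\uhss) + (\uhss-\uhs)$ is exactly the paper's decomposition $(id-R)(u-\uhs) + R(u-\uhs)$ via \eqref{eq:id_R}, and both arguments conclude with Pythagoras in the $\ah{\cdot}{\cdot}$ inner product, the cross term vanishing by the Ritz-projection orthogonality. Your treatment of the equality case (definiteness applied to the discrete correction $R(u-\uhs)\in\fes_h^p$) likewise matches the paper's observation that $\Norm{R(u-\uhs)}_{\cA_h}>0$ whenever $\uhs$ fails Galerkin orthogonality.
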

\begin{proof}
  Since the images of $R$ and $(id - R)$ are
  orthogonal with respect to $\ah{\cdot}{\cdot}$, Pythagoras'
  theorem implies
  \begin{equation}
      \label{ortho}
      \Norm{u - \uhs}_{\cA_h}^2
      = \Norm{(id-R)(u-\uhs)+R(u-\uhs)}_{\cA_h}^2
      \geq \Norm{(id-R)(u-\uhs)}_{\cA_h}^2
      = \Norm{u-\uhss}_{\cA_h}^2
    \end{equation}
  We have used  \eqref{eq:id_R} in the third step.
  Note that if $\uhs$ is not Galerkin orthogonal then
  $\Norm{R(u-\uhs)}_{\cA_h} > 0$ leading to a strict inequality in the
  first step. This completes the proof.
\end{proof}

\begin{remark}
  One appealing feature of the new reconstruction that results from
  Galerkin orthogonality is that if the reconstruction $\uhs$ has some
  superconvergence properties in the energy norm this is inherited by
  $\uhss$ and also immediately implies an additional order of accuracy
  in $\leb{2}$. This results from an Aubin-Nitsche trick being
  available.
\end{remark}

\begin{lemma}[Dual bounds]\label{lem:dual}
 Let $\Omega$ be a convex polygonal domain and let $\uhss$ be defined
 by \eqref{eq:nrec}, then there exists a constant $C>0$ (only
 depending on the shape regularity of the mesh) such that
 \begin{equation}
  \Norm{u - \uhss}_{\leb{2}(\Omega)} \leq Ch \Norm{u - \uhss}_{\cA_h}.
 \end{equation}
\end{lemma}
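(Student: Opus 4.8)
The plan is to execute an Aubin--Nitsche duality argument, exploiting the Galerkin orthogonality from Lemma~\ref{lem:uhssortho}. Set $e := u - \uhss$ and introduce the dual problem: find $w \in \hoz$ such that $\bi{v}{w} = \int_\Omega e\, v \d \vec x$ for all $v \in \hoz$. Testing with $v = e$ would immediately give $\Norm{e}_{\leb2(\W)}^2 = \bi{e}{w}$ if $e$ were conforming, so the first genuine issue to confront is that $\uhss \in \fes_h^*$ need not lie in $\sobh1(\W)$: the reconstruction may be discontinuous across edges. This means I cannot work with the continuous form $\cA$ alone; I must pass to the dG form $\cA_h$ and carry the extra jump terms along. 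Concretely, I expect to write $\Norm{e}_{\leb2(\W)}^2 = \ah{e}{w} + (\text{consistency terms})$, where the consistency terms collect the edge integrals that arise because $w$ is smooth but $e$ is only broken-$H^2$.

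Next I would use elliptic regularity. Since $\Omega$ is a convex polygonal domain, the dual solution satisfies $w \in \sobh2(\W) \cap \hoz$ with the a priori bound $\Norm{w}_{\sobh2(\W)} \leq C \Norm{e}_{\leb2(\W)}$; this is the point where convexity is used. With this regularity in hand, $w$ admits a well-defined Ritz projection $R w \in \fes_h^p$, and Galerkin orthogonality \eqref{eq:uhssortho} gives $\ah{e}{R w} = 0$. I would therefore subtract $R w$ for free and work with $\ah{e}{w - R w}$, so the whole estimate rests on the interpolation/approximation error of the dual variable rather than on $w$ itself. For the leading term this yields a factor $\Norm{w - R w}_{dG} \lesssim h \Norm{w}_{\sobh2(\W)}$, by the a priori dG estimate of Theorem~\ref{the:error-est} applied to the dual problem (equivalently, standard interpolation estimates in the broken norm). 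Combined with $\ah{e}{w-Rw} \leq \Norm{e}_{\cA_h}\Norm{w-Rw}_{\cA_h}$ and the equivalence of $\Norm{\cdot}_{\cA_h}$ with $\Norm{\cdot}_{dG}$, this produces exactly the desired $C h \Norm{e}_{\cA_h}\Norm{e}_{\leb2(\W)}$, and dividing through by $\Norm{e}_{\leb2(\W)}$ closes the argument.

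The main obstacle I anticipate is the careful bookkeeping of the consistency/jump terms generated by the nonconformity of $\uhss$. One must verify that each such edge term is also controlled by $h \Norm{e}_{\cA_h}\Norm{w}_{\sobh2(\W)}$. The natural route is to show that the bilinear form $\cA_h$ is adjoint-consistent, meaning that for the smooth dual solution $w$ the extra edge integrals either vanish (because $\jump{w}=0$ and $\jump{\D\nabla w}=0$ across interior edges, $w$ being $\sobh2$-regular) or can be absorbed using the scaled trace inequality $\Norm{h_e^{1/2}\jump{\D\nabla e}}_{\leb2(\cE)} \lesssim \Norm{e}_{\cA_h}$ together with $\Norm{h_e^{-1/2}\jump{w-Rw}}_{\leb2(\cE)} \lesssim h\Norm{w}_{\sobh2(\W)}$. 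Provided the symmetric interior penalty form is indeed adjoint consistent, which it is by symmetry of \eqref{eq:ipdg}, all error contributions reduce to the single product of a dG-norm of $e$ and an $O(h)$ approximation factor for $w$, and no stray terms survive. I would verify adjoint consistency explicitly as the one nonroutine check before assembling the final chain of inequalities.
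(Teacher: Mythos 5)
Your proposal is correct and follows essentially the same Aubin--Nitsche duality argument as the paper's proof: both rely on adjoint consistency of the symmetric form (so that $\Norm{u-\uhss}_{\leb{2}(\W)}^2 = \ah{\psi}{u-\uhss}$ exactly, the extra edge terms vanishing because the dual solution lies in $\sobh{2}(\W)\cap\hoz$), then subtract a discrete function using Galerkin orthogonality \eqref{eq:uhssortho}, and close with Cauchy--Schwarz, an $O(h)$ approximation bound for the dual solution, and convexity-based elliptic regularity. The only cosmetic difference is that you subtract the Ritz projection $Rw$ and invoke the a priori estimate of Theorem \ref{the:error-est} for the dual problem, whereas the paper subtracts the best approximation of $\psi$ in the piecewise linear subspace of $\fes_h^p$; both yield the same factor $h\Norm{\nabla^2 \psi}_{\leb{2}(\W)}$.
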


\begin{proof}
  Let $\psi \in \sobh{2}(\Omega) \cap \hoz$ solve
  $$- \div\qp{\D\nabla \psi} = u - \uhss$$ which implies
  \begin{equation}\label{eq:dual1}
    \int_{\T{}} \D \nabla \psi \cdot \nabla v
   -
   \int_\cE \jump{v} \cdot \avg{\D\nabla \psi}
   =
   \int_\Omega (u - \uhss) v \quad \forall v \in \sobh{1}(\T{}).
 \end{equation}
Thus, by choosing $v = u - \uhss$ in \eqref{eq:dual1} we obtain
\begin{equation}
 \begin{split}
   \Norm{u - \uhss}_{\leb{2}(\Omega)}^2
   &=
   \int_{\T{}}  \D\nabla \psi \cdot \nabla (u - \uhss)
   -
   \int_\cE \jump{u - \uhss} \cdot \avg{\D\nabla \psi}
   \\
   &=
   \ah{\psi}{u - \uhss}
   \\
   &=
   \ah{\psi - \psi_h}{u - \uhss}
 \end{split}
\end{equation}
for any $\psi_h \in \fes_h^p$ where the last equality follows from
Galerkin orthogonality \eqref{eq:uhssortho}.  Thus, choosing $\psi_h$ as the best
approximation of $\psi$ in the piecewise linear subspace of $\fes_h^p$,
we obtain
\begin{equation}
 \begin{split}
   \Norm{u - \uhss}_{\leb{2}(\Omega)}^2
   &\leq
   \Norm{\psi- \psi_h}_{\cA_h} \Norm{u - \uhss}_{\cA_h}
   \\
   &\leq
   Ch \Norm{u - \uhss}_{\cA_h} \Norm{\nabla^2 \psi}_{\leb{2}(\Omega)}
   \\
   & \leq C h \Norm{u - \uhss}_{\leb{2}(\Omega)} \Norm{u - \uhss}_{\cA_h},
 \end{split}
\end{equation}
by elliptic regularity of the dual problem, concluding the proof.
\end{proof}

\subsection{{\it A posteriori} error estimates}
\label{sec:apost}
Now that we have shown some fundamental results on the improved
reconstruction, we relax the regularity requirements on $u$ in this
subsection allowing for weak solutions to (\ref{ellipt}), that is,
$u\in\sobh1(\W)$. With that in mind we modify the definition of
$\ah{\cdot}{\cdot}$ such that it is a suitable extension over
$\sobh{1}(\T{})\times \sobh{1}(\T{})$ to
\begin{equation}
  \label{eq:ipdg_e}
  \ah{u}{v}
  :=
  \int_{\T{}} \D\nabla u \cdot \nabla v
  -
  \lift(\jump{v}) \cdot \D\nabla u
  -
  \lift( \jump{u} ) \cdot \D\nabla v
  +
  \int_\cE \sigma h_e^{-1} \jump{u} \cdot \jump{v},
\end{equation}
for $u,v \in \sobh{1}(\T{})$ and where $\lift : [\leb{2}(\cE)]^d
\rightarrow \qb{\fes_h^*}^d$ is the lifting operator that we recall from
\cite[Section 4.3.1]{Di-PietroErn:2012}
\begin{equation}
  \label{eq:liftdef}
  \int_\Omega \lift (\varphi) \cdot \D \vec \psi_h
  =
  \int_\cE \varphi \cdot \avg{\D \vec \psi_h} \quad \Foreach \vec \psi_h \in \qb{\fes_h^*}^d.
\end{equation}
The lifting operators satisfy the stability estimate, \cite[Lemma
  4.34]{Di-PietroErn:2012},
\begin{equation}\label{eq:liftstab}
  \Norm{\lift(\varphi)}_{\leb{2}(\Omega)}
  \leq  C \Norm{ h_e^{-\frac{1}{2}} \varphi}_{\leb{2}(\cE)},
\end{equation}
For test and trial functions in $\fes_h^*$ (which contains $\fes_h^p$
by assumption) the new definition of $\ah{\cdot}{\cdot}$ is equivalent
to the one given in \eqref{eq:ipdg}. Therefore for any function
$v^*\in\fes_h^*$ the Ritz projection given in Definition
\ref{def:nrec} remains the same still satisfying $\ah{Rv^*}{\phi_h} =
\ah{v^*}{\phi_h}$ for all $\phi_h \in \fes_h^p$.  But note that we no
longer have $u_h=Ru$ and Galerkin orthogonality for $\uhss$ no longer
holds in general, it only holds for a $\sobh1(\W)$ conforming
subspace of $\fes_h^p$:

\begin{lemma}\label{lem:condortho}
  For $u\in\sobh{1}(\W)$ and $z_h\in\fes_h^p\cap\sobh1_0(\W)$ it holds that
  \begin{equation}
    \ah{\uhss - u}{z_h} = 0.
  \end{equation}
\end{lemma}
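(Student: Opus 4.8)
The plan is to reduce the conditional orthogonality to the standard Galerkin consistency of the dG method on the conforming subspace. First I would split
\[
\uhss - u = (\uhs - R\uhs) + (u_h - u).
\]
Since $z_h \in \fes_h^p \cap \hoz \subset \fes_h^p$, the defining property of the Ritz projection $R$ (which, as noted after \eqref{eq:liftstab}, still holds for the extended form) gives $\ah{\uhs - R\uhs}{z_h} = \ah{\uhs}{z_h} - \ah{R\uhs}{z_h} = 0$. Hence the claim collapses to proving $\ah{u_h - u}{z_h} = 0$, i.e. a consistency statement comparing the discrete and exact solutions against conforming test functions.

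Next I would invoke the Galerkin method \eqref{eq:galerkin-method}: because $z_h \in \fes_h^p$, we have $\ah{u_h}{z_h} = \int_\W f z_h \d \vec x$. It therefore remains only to show $\ah{u}{z_h} = \int_\W f z_h \d \vec x$, after which subtraction closes the argument.

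The heart of the proof is evaluating the extended form \eqref{eq:ipdg_e} at the exact solution. The key observation is that \emph{both} arguments have vanishing jumps: $\jump{u} = 0$ because $u \in \hoz$ is single-valued across interior edges and vanishes on $\partial\W$, and $\jump{z_h} = 0$ precisely because $z_h$ lies in the conforming subspace $\fes_h^p \cap \hoz$. Consequently the two lifting terms $\lift(\jump{z_h}) \cdot \D\nabla u$ and $\lift(\jump{u}) \cdot \D\nabla z_h$ together with the penalty term all drop out, leaving $\ah{u}{z_h} = \int_{\T{}} \D\nabla u \cdot \nabla z_h = \bi{u}{z_h}$ (the elementwise gradient integral coincides with the global one since $z_h$ is $H^1$-conforming). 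The weak formulation \eqref{eq:abstract-prob}, valid for the test function $z_h \in \hoz$, then yields $\bi{u}{z_h} = \int_\W f z_h \d \vec x$, as needed.

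I do not anticipate a serious technical obstacle; the argument is essentially careful bookkeeping of which dG terms survive. The one point requiring attention — and the reason the orthogonality is only conditional — is the vanishing of $\jump{z_h}$. For a general $z_h \in \fes_h^p$ this jump is nonzero, the term $\lift(\jump{z_h}) \cdot \D\nabla u$ would not drop out, and the identity $\ah{u}{z_h} = \bi{u}{z_h}$ would fail; it is exactly the restriction to $\fes_h^p \cap \hoz$ that removes this obstruction.
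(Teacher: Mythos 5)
Your proposal is correct and follows essentially the same route as the paper: the same splitting $\uhss - u = (\uhs - R\uhs) + (u_h - u)$, killing the first term via the Ritz projection property of the extended form $\ah{\cdot}{\cdot}$, and reducing the second to the consistency identity $\ah{u}{z_h} = \int_\W f z_h \d \vec x$, which holds because $\jump{u}$ and $\jump{z_h}$ both vanish so the lifting and penalty terms drop out. Your write-up is merely a more explicit version of the paper's one-line computation, and your closing remark correctly identifies the continuity of $z_h$ as the reason the orthogonality is only conditional.
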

\begin{proof}
  By definition of $\uhss$ we have that
  \begin{equation}
    \begin{split}
      \ah{\uhss - u}{z_h}
      &=
      \ah{\uhs - R \uhs + u_h - u}{z_h}
      \\
      &=
      \ah{\uhs - R\uhs}{z_h}
      +
      \ah{u_h - u}{z_h}
      \\
      &=
      \ah{u_h - u}{z_h},
    \end{split}
  \end{equation}
  since $\uhs \in \fes_h^*$. 
  Now, notice that by definition
  \begin{equation}
    \ah{u_h - u}{z_h} = \ltwop{f}{z_h} - \int_{\T{}} \D\nabla u \cdot \nabla z_h - \lift(\jump{z_h}) \cdot \D\nabla u = 0
  \end{equation}
  as $z_h $ is an element of $ \fes_h^p\cap \sobh1_0(\W)$ and, hence, continuous, as required.
\end{proof}

Let a quantity of interest be given by the linear functional
$\cJ\in\sobh{-1}(\T{})$, the dual space of $\sobh{1}_0(\T{})$. Note that $\sobh{-1}(\T{}) \subset \sobh{-1}(\W)$ where the latter is the dual space of $\sobh{1}_0(\W)$. We begin
by deriving an error representation formula. Following
\cite{HoustonSchoetzauWihler:2005}, we split $\uhss$
into a continuous part $\uhss_C \in \fes_h^{*} \cap \sobh{1}_0(\Omega)$
and a discontinuous part $\uhss_\perp \in \fes_h^{*}$ so that
\begin{equation}
  \uhss= \uhss_C + \uhss_\perp
  \quad
  \text{ and } 
  \quad
  \bih{ \uhss_\perp }{\psi_h} = 0
  \Foreach \psi \in \fes_h^{*}\cap \sobh{1}_0(\Omega).
\end{equation}
\begin{theorem}[Dual error representation]
  \label{the:err-rep}
  Let $u\in\sobh1_0(\W)$ be the solution of \eqref{eq:abstract-prob} and
  let $\uhss$ be given by \eqref{eq:nrec}, then
  \begin{equation}
    \cJ(u - \uhss)
    =
    \langle f, z - z_h \rangle - \bih{\uhss}{z - z_h} + \bih{\uhss_\perp}{z} - \cJ(\uhss_\perp)
  \end{equation}
  where $\langle \cdot,\cdot \rangle$ denotes the $\leb{2}$ scalar product and $z \in \hoz$ is the solution of the dual problem
  \begin{equation}
    \label{eq:dual-prob}
    \bi{v}{z} = \cJ( v ) \Foreach v\in \hoz
  \end{equation}
  and  $z_h$ is an arbitrary function in $\fes_h^p\cap \sobh{1}_0(\W)$.
\end{theorem}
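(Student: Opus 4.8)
The plan is to start from $\cJ(u-\uhss)$, use linearity together with the orthogonal decomposition $\uhss = \uhss_C + \uhss_\perp$, and exploit that both $u$ and the continuous part $\uhss_C$ lie in $\hoz$. First I would write $\cJ(u-\uhss) = \cJ(u) - \cJ(\uhss_C) - \cJ(\uhss_\perp)$ and apply the dual problem \eqref{eq:dual-prob} to the two conforming arguments $u,\uhss_C \in \hoz$, which gives $\cJ(u) = \bi{u}{z}$ and $\cJ(\uhss_C) = \bi{\uhss_C}{z}$. Collecting terms yields $\cJ(u-\uhss) = \bi{u-\uhss_C}{z} - \cJ(\uhss_\perp)$, so the functional $\cJ$ has now been removed from the conforming part and only the genuinely nonconforming remainder $\cJ(\uhss_\perp)$ is left explicit.

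Next, since $u - \uhss_C \in \hoz$ all interior jumps and boundary traces vanish, so the extended form and the exact form agree, $\bi{u-\uhss_C}{z} = \bih{u-\uhss_C}{z}$. Substituting $\uhss_C = \uhss - \uhss_\perp$ splits this into $\bih{u-\uhss}{z} + \bih{\uhss_\perp}{z}$, which already produces the term $\bih{\uhss_\perp}{z}$ appearing in the claim. It then remains to show $\bih{u-\uhss}{z} = \langle f, z - z_h\rangle - \bih{\uhss}{z-z_h}$. For this I would first note that $\bih{u-\uhss}{z_h} = 0$ for every $z_h \in \fes_h^p \cap \sobh1_0(\W)$: indeed $\bih{\uhss}{z_h} = \bih{u}{z_h}$ by the conditional orthogonality of Lemma \ref{lem:condortho}. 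Hence $\bih{u-\uhss}{z} = \bih{u-\uhss}{z-z_h}$, and using $\bih{u}{z-z_h} = \bi{u}{z-z_h} = \langle f, z-z_h \rangle$ (again because $u, z-z_h \in \hoz$ and $u$ solves \eqref{eq:abstract-prob}) delivers exactly the required identity. Assembling the three pieces gives the stated representation.

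The computations are all routine; the only real care needed is bookkeeping between the two bilinear forms $\bi{\cdot}{\cdot}$ and $\bih{\cdot}{\cdot}$. The dual problem \eqref{eq:dual-prob} is posed with the exact form $\bi{\cdot}{\cdot}$, whereas the orthogonality statements are phrased with the extended form $\bih{\cdot}{\cdot}$; the bridge between them is precisely that the two coincide whenever both arguments are conforming, i.e. lie in $\hoz$. The main subtlety I expect is that the orthogonality available to us is only \emph{conditional} --- it holds against the conforming subspace $\fes_h^p \cap \sobh1_0(\W)$ and not against all of $\fes_h^p$ --- which is exactly why $z_h$ must be restricted to this subspace and why the nonconforming remainder $\uhss_\perp$ survives as the extra contributions $\bih{\uhss_\perp}{z} - \cJ(\uhss_\perp)$ in the final formula.
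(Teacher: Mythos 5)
Your proposal is correct and takes essentially the same route as the paper's own proof: decompose $\uhss = \uhss_C + \uhss_\perp$, apply the dual problem \eqref{eq:dual-prob} to the conforming arguments in $\hoz$, and use the conditional Galerkin orthogonality of Lemma \ref{lem:condortho} to pass from $z$ to $z - z_h$. The only difference is cosmetic bookkeeping: you apply orthogonality to $\bih{u-\uhss}{z_h}$ and then identify $\bih{u}{z-z_h} = \langle f, z-z_h\rangle$, whereas the paper first writes $\bi{u}{z} = \langle f, z\rangle$ and then splits $z = (z-z_h) + z_h$, which yields the identical chain of identities in a different order.
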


\begin{proof}
  By definition of $z$, we have, for any
  $z_h \in \fes_h^p\cap \sobh{1}_0(\W)$,
  \begin{equation}
    \begin{split}
      \cJ(u - \uhss) &= \cJ(u- \uhss_C - \uhss_\perp)
      \\
      &=
      \cJ(u- \uhss_C) - \cJ(\uhss_\perp)
      \\
      &=
      \bi{u-\uhss_C}{ z}- \cJ(\uhss_\perp)
      \\
      &=
      \langle f,z\rangle - \bih{\uhss_C}{z}- \cJ(\uhss_\perp)
      \\
      &=
      \langle f,z\rangle - \bih{\uhss }{z} + \bih{\uhss_\perp}{z}- \cJ(\uhss_\perp)
      \\
      &=
      \langle f,z\rangle - \bih{\uhss }{z-z_h}  -  \bih{\uhss }{z_h} + \bih{\uhss_\perp}{z} - \cJ(\uhss_\perp)
      \\
      &=
      \langle f,z-z_h\rangle - \bih{\uhss }{z-z_h}  + \bih{\uhss_\perp}{z}- \cJ(\uhss_\perp),
    \end{split}
  \end{equation}
  where we made use of Galerkin orthogonality, Lemma \ref{lem:condortho}, in the last step.
\end{proof}

\begin{theorem}[Primal error estimate]\label{the:primalerror2}
  There exists some constant $C_A >0$ depending on mesh geometry and
  polynomial degree such that
  \begin{equation}\label{eq:primalerror2}
    \Norm{ u - \uhss}_{dG} \leq 
    C_A R^{**}
    :=
    C_A \qp{
      \sum_{K\in\T{}}
      \qp{
        (\eta^{**}_K)^2
        +
        \frac{1}{2} \sum_{e \in \partial K} (\eta^{**}_e)^2
      }
    }^{\frac{1}{2}},
  \end{equation}
  where
  \begin{equation}
    \begin{split}
      (\eta^{**}_K)^2 &:= \Norm{h_K ( f + \div\qp{\D\nabla \uhss})}_{\leb{2}(K)}^2
      \\
      (\eta^{**}_e)^2
      &:=
      \Norm{h_e^{\frac{1}{2}} \jump{\D\nabla \uhss}}_{\leb{2}(e)}^2
      +
      \Norm{h_e^{-\frac{1}{2}} \jump{ \uhss}  }_{\leb{2}(e)}^2
    \end{split}
  \end{equation}
\end{theorem}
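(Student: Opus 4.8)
The plan is to re-run the classical residual-based \emph{a posteriori} argument for the interior penalty dG method (the one behind Theorem~\ref{the:error-est}), with $u_h$ replaced by $\uhss$ and with full Galerkin orthogonality replaced by the conditional orthogonality of Lemma~\ref{lem:condortho}. Observe first that $R^{**}$ has exactly the same structure as $R_h$ in Theorem~\ref{the:error-est}, only with $u_h$ exchanged for $\uhss$, so the target really is the same estimate for a different (merely piecewise smooth) function. I would begin by splitting the dG norm into its broken-gradient and jump parts. Since $u\in\hoz$ is single valued, $\jump{u-\uhss}=-\jump{\uhss}$, and hence the jump contribution $\Norm{h_e^{-\frac12}\jump{u-\uhss}}_{\leb{2}(\cE)}$ equals $\Norm{h_e^{-\frac12}\jump{\uhss}}_{\leb{2}(\cE)}$, which is already one of the terms in $(\eta^{**}_e)^2$. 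It thus remains to control the broken gradient $\Norm{\nabla(u-\uhss)}_{\leb{2}(\T{})}$.

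To handle the non-conformity I would introduce a conforming approximation $v_c\in\fes_h^*\cap\hoz$ of $\uhss$ built by an averaging (Oswald-type) operator, for which the standard decomposition result (in the spirit of \cite{HoustonSchoetzauWihler:2005,KarakashianPascal:2003}) gives $\Norm{\nabla(\uhss-v_c)}_{\leb{2}(\T{})}^2 \le C\sum_{e\in\cE}\Norm{h_e^{-\frac12}\jump{\uhss}}_{\leb{2}(e)}^2$, i.e. the conforming defect is again controlled by jumps. By the triangle inequality this reduces the problem to bounding $\Norm{\nabla(u-v_c)}_{\leb{2}(\T{})}$ for the conforming error $\phi:=u-v_c\in\hoz$.

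Using uniform positive-definiteness of $\D$ I would write $\Norm{\nabla\phi}_{\leb{2}(\T{})}^2 \lesssim \int_{\T{}}\D\nabla\phi\cdot\nabla\phi$ and split $\nabla\phi=\nabla(u-\uhss)+\nabla(\uhss-v_c)$, the $\uhss-v_c$ term being jump-controlled as above. For the remaining $\int_{\T{}}\D\nabla(u-\uhss)\cdot\nabla\phi$ I would insert a Scott--Zhang quasi-interpolant $z_h=I\phi\in\fes_h^p\cap\hoz$. For the $z_h$ part, Lemma~\ref{lem:condortho} together with the lifting identity \eqref{eq:liftdef} gives $\int_{\T{}}\D\nabla(u-\uhss)\cdot\nabla z_h = -\int_\cE\jump{\uhss}\cdot\avg{\D\nabla z_h}$, bounded by $\Norm{h_e^{-\frac12}\jump{\uhss}}_{\leb{2}(\cE)}\Norm{\nabla\phi}_{\leb{2}(\T{})}$ after a discrete trace inequality and the $\sobh1$-stability of $I$. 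For the $\phi-z_h$ part, elementwise integration by parts (using that $u$ solves \eqref{eq:abstract-prob} and $\phi-z_h\in\hoz$) produces $\int_{\T{}}\qp{f+\div\qp{\D\nabla\uhss}}\qp{\phi-z_h}-\sum_{e\in\cE_i}\int_e\jump{\D\nabla\uhss}\qp{\phi-z_h}$, and Cauchy--Schwarz together with the interpolation estimates $\Norm{\phi-z_h}_{\leb{2}(K)}\lesssim h_K\Norm{\nabla\phi}_{\leb{2}(\omega_K)}$ and $\Norm{\phi-z_h}_{\leb{2}(e)}\lesssim h_e^{\frac12}\Norm{\nabla\phi}_{\leb{2}(\omega_e)}$ then yield precisely $\eta^{**}_K$ and the flux-jump part of $\eta^{**}_e$, each multiplied by $\Norm{\nabla\phi}_{\leb{2}(\T{})}$.

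Collecting these bounds and dividing by $\Norm{\nabla\phi}_{\leb{2}(\T{})}$ gives $\Norm{\nabla(u-v_c)}_{\leb{2}(\T{})}\lesssim R^{**}$, hence $\Norm{\nabla(u-\uhss)}_{\leb{2}(\T{})}\lesssim R^{**}$, and adding the jump part from the first step yields $\Norm{u-\uhss}_{dG}\le C_A R^{**}$. I expect the main obstacle to be the conforming-approximation step: the averaging operator and its jump-based error bound are classically stated for piecewise polynomials, whereas here $\uhss\in\fes_h^*$ is only piecewise smooth, so I would need to justify that estimate in this more general (but finite-dimensional) setting, leaning on the inverse and trace inequalities available on shape-regular meshes. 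A secondary point requiring care is matching the lifting form of $\ah{\cdot}{\cdot}$ in \eqref{eq:ipdg_e} to the classical edge terms when invoking Lemma~\ref{lem:condortho}.
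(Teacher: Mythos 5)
Your proposal is correct, but it reaches \eqref{eq:primalerror2} by a genuinely different route from the paper. The paper's proof is organised around the dual error representation of Theorem~\ref{the:err-rep}: it chooses the functional $\cJ(v)=\int_{\T{}}\D\qp{\nabla u-\nabla\uhss}\cdot\nabla v$, splits $\uhss=\uhss_C+\uhss_\perp$ following \cite{HoustonSchoetzauWihler:2005}, controls the nonconforming remainder via $\Norm{\uhss_\perp}_{dG}\le C_P\Norm{h_e^{-\frac12}\jump{\uhss}}_{\leb{2}(\cE)}$ cited from \cite{HoustonPerugiaSchoetzau:2004}, bounds $\Norm{\cJ}_{\sobh{-1}(\W)}$ by the broken energy error, and closes the estimate by taking $z_h$ to be the Cl\'ement interpolant of the dual solution $z$. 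You never form a dual problem: you pass to a conforming comparator $v_c$ (Oswald average of $\uhss$), test with $\phi=u-v_c\in\hoz$, invoke Lemma~\ref{lem:condortho} directly through a Scott--Zhang interpolant $z_h=I\phi$, and integrate by parts elementwise --- the classical Karakashian--Pascal-style direct argument. The essential ingredients coincide (conditional orthogonality, used by you directly and by the paper inside Theorem~\ref{the:err-rep}; a jump-controlled conforming approximation, your Oswald bound versus the $\uhss_\perp$ estimate \eqref{eq:uperp}; an $\sobh{1}$-stable quasi-interpolant), but your version is more elementary: it avoids the dual solution, the $\bih{\uhss_\perp}{z}-\cJ(\uhss_\perp)$ bookkeeping and the $\sobh{-1}$ bound \eqref{eq:stabcJ}, at the cost of one extra triangle inequality through $v_c$. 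Your key identity $\int_{\T{}}\D\nabla(u-\uhss)\cdot\nabla z_h=-\int_\cE\jump{\uhss}\cdot\avg{\D\nabla z_h}$ checks out: for continuous $z_h$ vanishing on $\partial\W$ all its jump terms drop from \eqref{eq:ipdg_e}, $\jump{u-\uhss}=-\jump{\uhss}$, and $\nabla z_h\in[\fes_h^p]^d\subset[\fes_h^*]^d$ legitimises unwinding the lift via \eqref{eq:liftdef}; alternatively \eqref{eq:liftstab} bounds that term without unwinding it. Finally, the caveat you flag --- that Oswald-type estimates are classically stated for piecewise polynomials while $\uhss\in\fes_h^*$ is only piecewise smooth --- is real, but it afflicts the paper equally, since \cite[Theorem 5.3]{HoustonPerugiaSchoetzau:2004} is likewise applied there to $\fes_h^*$; in both proofs it is resolved because $\fes_h^*$ is finite dimensional with a fixed local construction, so the scaling and norm-equivalence arguments carry over with constants absorbed into $C_A$, consistent with the theorem's statement that $C_A$ depends on mesh geometry and polynomial degree.
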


\begin{proof}
  Since 
  \begin{equation}
    \Norm{ u -  \uhss}_{dG}
    \leq C
    \left( \Norm{\D^{\tfrac{1}{2}}\qp{\nabla u - \nabla \uhss}}_{\leb{2}(\T{})}^2  + \sum_{e \in \cE} \Norm{h_e^{-\frac 12} \jump{\uhss}}_{\leb{2}(e)}^2 \right)^{\frac12},
  \end{equation}
  where $C$ is some constant depending on $\D$ only, it is sufficient to show that $\Norm{\D^{\tfrac{1}{2}}\qp{\nabla u - \nabla
      \uhss}}_{\leb{2}(\T{})}$ is bounded by the right hand side of
  \eqref{eq:primalerror2}.
  
  In Theorem \ref{the:err-rep} we may choose
  \begin{equation}
    \cJ(v):= \int_{\T{}}
    \D(\nabla u - \nabla \uhss) \nabla v.
  \end{equation}
  Note that, by definition, $z \in \hoz$ so that, if $\fes_h^p$ contains
  discontinuous functions, $z \not= u - \uhss$.  Nevertheless, $z$
  satisfies the stability estimate
  \begin{equation}
    \Norm{\D^{\tfrac{1}{2}}\nabla z}_{\leb{2}(\Omega)}
    \leq
    \Norm{\D^{\tfrac{1}{2}}\qp{\nabla u - \nabla \uhss}}_{\leb{2}(\T{})}.
  \end{equation}
  Then, for any $z_h \in \fes_h^p \cap \hoz$, Theorem \ref{the:err-rep}
  implies
  \begin{equation}
    \label{eq:rely1}
    \begin{split}
      \Norm{\D^{\tfrac{1}{2}}\nabla \qp{u - \uhss}}_{\leb{2}(\T{})}^2
      &=
      \cJ(u - \uhss)            
      \\
      &=
      \langle f,z-z_h\rangle - \bih{\uhss }{z-z_h}  + \bih{\uhss_\perp}{z}- \cJ(\uhss_\perp)
      \\
      &=
      \langle f,z-z_h\rangle - \int_{\T{}} (\nabla \uhss - \lift(\jump{\uhss})) \D\nabla (z - z_h)
      \\
      &\qquad + \bih{\uhss_\perp}{z}- \cJ(\uhss_\perp).
    \end{split}
  \end{equation}
  Integrating by parts in \eqref{eq:rely1} and using
  \eqref{eq:liftstab} we obtain
  \begin{equation}
    \label{eq:rely1b}
    \begin{split}
      \Norm{\D^{\tfrac{1}{2}}\nabla \qp{u - \uhss}}_{\leb{2}(\T{})}^2
      &=
      \int_{\T{}} (f + \div\qp{\D\nabla \uhss})(z - z_h )
      -
      \int_\cE \jump{\D\nabla \uhss} ( z - z_h )
      \\
      &\quad + \int_{\T{}}  \lift(\jump{\uhss}) \D\nabla (z - z_h) + \bih{\uhss_\perp}{z}- \cJ(\uhss_\perp)
      \\
      &\leq
      \sum_K h_K \Norm{f + \div\qp{\D\nabla \uhss}}_{\leb{2}(K)} h_K^{-1}\Norm{z - z_h }_{\leb{2}(K)}
      \\
      &\quad +
      \frac{1}{2} \sum_{e \in \partial K} h_e^{\frac{1}{2}} \Norm{\jump{\D\nabla \uhss}}_{\leb{2}(e)} h_e^{- \frac{1}{2}} \Norm{z - z_h }_{\leb{2}(e)}
      \\
      & \quad
      +
      C \Norm{h_e^{-\frac{1}{2}}\jump{\uhss} }_{\leb{2}(\cE)} \Norm{ \D^{\tfrac{1}{2}}\nabla (z - z_h)}_{\leb{2}(\Omega)}
      \\
      & \quad +
      C \Norm{\uhss_\perp}_{dG} \Norm{\D^{\tfrac{1}{2}}\nabla \qp{z - z_h}}_{\leb{2}(\Omega)}
      +
      \Norm{\cJ}_{\sobh{-1}(\W)} \Norm{\uhss_\perp}_{dG}
    \end{split}
 \end{equation}
 From \cite[Theorem 5.3]{HoustonPerugiaSchoetzau:2004} we obtain
 \begin{equation}\label{eq:uperp}
  \Norm{\uhss_\perp}_{dG} \leq C_P \Norm{h_e^{-\frac{1}{2}} \jump{\uhss}}_{\leb{2}(\cE)}
 \end{equation}
with a constant $C_P>0$ which is independent of $h$ but depends on the
shape regularity of the mesh and the polynomial degree and we also
note that
\begin{equation}\label{eq:stabcJ}
  \Norm{\cJ}_{\sobh{-1}(\Omega)}
  \leq
  C\Norm{\D^{\tfrac{1}{2}} \nabla \qp{u - \uhss}}_{\leb{2}(\T{})}.
\end{equation}

We insert \eqref{eq:uperp} and \eqref{eq:stabcJ} into
\eqref{eq:rely1b} and apply trace inequality and Cauchy-Schwarz
inequality and obtain
\begin{equation}
  \label{eq:rely2}
  \begin{split}
    \Norm{\D^{\tfrac{1}{2}}\nabla \qp{u - \uhss}}_{\leb{2}(\T{})}^2   
    &\leq
    \left( \sum_K \left(
    (\eta^{**}_K)^2
    +
    C \sum_{e \in \partial K} (\eta^{**}_e)^2 \right)\right)^{\frac{1}{2}}
    \Norm{h^{-1} (z - z_h) }_{\leb{2}(\Omega)}
    \\
    &\qquad +
    C \left( \sum_K      \sum_{e \in \partial K} (\eta^{**}_e)^2 \right)^{\frac{1}{2}}
    \Norm{\D^{\tfrac{1}{2}}\nabla (z - z_h) }_{\leb{2}(\Omega)}.
  \end{split}
\end{equation}
Now, we choose $z_h \in \fes_h^p \cap \hoz$
as the Cl\'ement interpolant of $z$ so that
\begin{equation}\label{eq:rely3}
\begin{split}
  \Norm{h^{-1} (z - z_h) }_{\leb{2}(\Omega)} + \Norm{\D^{\tfrac{1}{2}}\nabla (z - z_h) }_{\leb{2}(\Omega)}
  &\leq
  C \Norm{\nabla z }_{\leb{2}(\Omega)}
  \\
  &\leq C \Norm{\D^{\tfrac{1}{2}} \nabla \qp{u - \uhss}}_{\leb{2}(\T{})},
\end{split}
\end{equation}
and insert \eqref{eq:rely3} into \eqref{eq:rely2} to obtain the
assertion of the theorem.
\end{proof}

The error estimator, derived in Theorem \ref{the:primalerror2}, is
locally efficient in the following sense:
\begin{theorem}[Local efficiency]
  \label{the:efficiency}
  Assume $f$ and $\D$ are piecewise polynomial on $\T{}$. Then, there exists a
  constant $C>0$ independent of $h$ such that for any $K \in \T{}$ and
  any $e \in \cE$ the following estimates hold:
  \begin{equation}\label{eq:loceff}
    \eta^{**}_K \leq  C \Norm{\D^{\tfrac{1}{2}} \nabla \qp{u - \uhss}}_{\leb{2}(K)}
  \end{equation}
  and
  \begin{equation}
    \eta^{**}_e \leq C\Norm{\D^{\tfrac{1}{2}}\nabla \qp{u - \uhss}}_{\leb{2}(K_e)}
  \end{equation}
    where $K_e$ denotes the union of cells sharing common edge $e$.
\end{theorem}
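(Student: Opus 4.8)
The plan is to establish both bounds by the standard localized bubble-function technique of Verf\"urth, treating separately the three constituents of $\eta^{**}_K$ and $\eta^{**}_e$: the element residual $r_K := f + \div\qp{\D\nabla\uhss}$, the flux jump $\jump{\D\nabla\uhss}$, and the solution jump $\jump{\uhss}$. The starting point is a pair of local residual identities. Testing the weak form \eqref{eq:abstract-prob} with $v\in\hoz$ supported in a single element $K$ and integrating the $\uhss$-contribution by parts over $K$ gives, since $v$ vanishes on $\partial K$,
\begin{equation}
  \int_K \D\nabla\qp{u - \uhss}\cdot\nabla v = \int_K r_K\, v \qquad \Foreach v \in \sobh1_0(K).
\end{equation}
The analogous computation over the patch $K_e = K^+\cup K^-$, now retaining the interface contribution on $e$, yields
\begin{equation}
  \int_e \jump{\D\nabla\uhss}\, v = \int_{K_e} r_K\, v - \int_{K_e}\D\nabla\qp{u-\uhss}\cdot\nabla v \qquad \Foreach v \in \sobh1_0(K_e).
\end{equation}

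For the element estimate I would let $b_K$ be the interior bubble on $K$ and test the first identity with $v = b_K r_K$. Positivity of $b_K$ in the interior gives the lower bound $\int_K r_K v = \int_K b_K r_K^2 \geq c\Norm{r_K}_{\leb{2}(K)}^2$, while boundedness of $\D$ together with an inverse estimate gives $\Norm{\D^{\tfrac12}\nabla(b_K r_K)}_{\leb{2}(K)} \leq C h_K^{-1}\Norm{r_K}_{\leb{2}(K)}$. Combining these with the identity and Cauchy--Schwarz, then dividing by $\Norm{r_K}_{\leb{2}(K)}$, yields $h_K\Norm{r_K}_{\leb{2}(K)} = \eta^{**}_K \leq C\Norm{\D^{\tfrac12}\nabla(u-\uhss)}_{\leb{2}(K)}$, which is \eqref{eq:loceff}.

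For the edge estimate I would use the edge bubble $b_e$ supported in $K_e$ together with a bounded extension of $\jump{\D\nabla\uhss}$ off $e$, test the second identity with the resulting $v$, and use the edge-bubble norm equivalence on $e$ with the scalings $\Norm{v}_{\leb{2}(K_e)}\leq Ch_e^{1/2}\Norm{\jump{\D\nabla\uhss}}_{\leb{2}(e)}$ and $\Norm{\nabla v}_{\leb{2}(K_e)}\leq Ch_e^{-1/2}\Norm{\jump{\D\nabla\uhss}}_{\leb{2}(e)}$. This produces $h_e^{1/2}\Norm{\jump{\D\nabla\uhss}}_{\leb{2}(e)} \leq C\qp{h_e\Norm{r_K}_{\leb{2}(K_e)} + \Norm{\D^{\tfrac12}\nabla(u-\uhss)}_{\leb{2}(K_e)}}$, after which the volume term is absorbed using the element bound already proved together with $h_e\simeq h_K$ from shape-regularity (for boundary edges $K_e$ is the single adjacent element and the argument specializes). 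The solution-jump contribution needs no bubble argument: since $u\in\hoz$ is single-valued across interior edges and vanishes on $\partial\Omega$, we have $\jump{\uhss} = -\jump{u-\uhss}$, so $\Norm{h_e^{-\tfrac12}\jump{\uhss}}_{\leb{2}(e)}$ is exactly the jump seminorm of the error $u-\uhss$ across $e$, a constituent of $\Norm{u-\uhss}_{dG}$, and hence is trivially controlled by the local error being estimated.

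The main obstacle, and the point where the argument departs from the textbook case, is that $\uhss$ need not be piecewise polynomial, so neither $r_K$ nor $\jump{\D\nabla\uhss}$ is a polynomial and the classical bubble norm equivalences do not apply verbatim. I would resolve this using the finite-dimensionality of $\fes_h^*$: after scaling $K$ (respectively $K_e$) to a reference configuration, $r_K$ and the flux jump range over a \emph{fixed} finite-dimensional space of smooth functions, on which $\phi\mapsto\qp{\int b\,\phi^2}^{1/2}$ is a norm and is therefore equivalent to $\Norm{\cdot}_{\leb{2}}$ with a constant that is $h$-independent by shape-regularity. This is precisely where the hypothesis that $f$ and $\D$ are piecewise polynomial enters: it guarantees that $r_K$ belongs to such a fixed finite-dimensional space with no leftover data-oscillation term, so that the resulting estimates are genuine efficiency bounds rather than efficiency up to oscillation.
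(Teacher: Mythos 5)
The paper itself gives no written proof: Theorem~\ref{the:efficiency} is dispatched with the single line that both bounds ``are standard and follow'' Verf\"urth \cite{Verfurth:1996}. Your proposal is exactly the argument that citation points to --- local residual identities, interior bubble for $\eta^{**}_K$, edge bubble for the flux jump with the volume residual absorbed via the element bound --- and your one non-textbook ingredient, replacing polynomial norm equivalence by norm equivalence on the \emph{fixed} finite-dimensional pullback of $\fes_h^*$ (with the piecewise-polynomial hypothesis on $f$ and $\D$ guaranteeing no oscillation remainder), is precisely the right patch for a non-polynomial $\uhss$; it is consistent with the paper's own Remark~\ref{rem:icr}, which says that efficiency is shown ``along the same lines'' with bubble functions adapted to sub-triangulations when $\uhss$ has in-cell kinks. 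The element-residual and flux-jump parts of your argument are sound (note they use the standing assumption $\fes_h^*\subset\sobh{2}(\T{})$, so the integrations by parts are legitimate).

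There is, however, one step that does not deliver the inequality as literally stated. For the solution-jump constituent you correctly observe that $\Norm{h_e^{-1/2}\jump{\uhss}}_{\leb{2}(e)} = \Norm{h_e^{-1/2}\jump{u-\uhss}}_{\leb{2}(e)}$, a constituent of $\Norm{u-\uhss}_{dG}$; but the theorem's right-hand side is only the broken gradient seminorm $\Norm{\D^{1/2}\nabla\qp{u-\uhss}}_{\leb{2}(K_e)}$, and your closing clause ``hence is trivially controlled by the local error being estimated'' conflates the two. No inequality of the form $\Norm{h_e^{-1/2}\jump{w}}_{\leb{2}(e)} \leq C\Norm{\nabla w}_{\leb{2}(K_e)}$ can hold for piecewise-$\sobh{1}$ functions (take $w = c\,\chi_{K^+}$: the right side vanishes, the left does not), a trace inequality inevitably brings in $h_e^{-1}\Norm{w}_{\leb{2}(K_e)}$, which the gradient does not control, and your proposal invokes no special property of $w = u-\uhss$ that would rescue the bound. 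What your argument actually proves --- and what the standard dG efficiency statement asserts, cf.~\cite{KarakashianPascal:2003} --- is $\eta^{**}_e \leq C\qp{\Norm{\D^{1/2}\nabla\qp{u-\uhss}}_{\leb{2}(K_e)} + \Norm{h_e^{-1/2}\jump{u-\uhss}}_{\leb{2}(e)}}$, with the jump part efficient with constant one. This defect sits in the theorem's wording as much as in your proof, but as a blind proof of the stated inequality the final step is a non sequitur: either restate the bound with the local dG-type norm on the right, or supply an argument (none is standard) bounding $\jump{\uhss}$ by the gradient of the error alone. A smaller instance of the same issue hides in your parenthetical about boundary edges: with the paper's convention $\jump{\D\nabla\uhss} = \D\nabla\uhss\cdot\vec n$ on $e\subset\partial\Omega$, the boundary flux constituent is likewise not controlled by the error, so the flux-jump bound should be asserted for interior edges only --- a reading the reliability proof supports, since there $z-z_h$ vanishes on $\partial\Omega$.
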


\begin{proof}
 Both proofs are
  standard and follow \cite{Verfurth:1996}.
 \end{proof}

\begin{remark}[Data oscillation]
  In case $f$ or $\D$ are not polynomial the right hand side of
  \eqref{eq:loceff} contains additional data oscillation terms.
\end{remark}

\section{Post-Processors}
\label{sec:recovery}

In order to show the versatility of our results, we consider two
families of reconstruction operators. Namely, the
Smoothness-Increasing Accuracy-Conserving (SIAC) post-processing
\cite{Thomee1977,BrambleSchatz:1977,ryan2015} as well as
patch reconstruction via the Zienkiewicz and Zhu
\cite{ZienkiewiczZhu,ZHANG1998159} Superconvergent Patch
Recovery (SPR) technique. Below we outline the procedure for
performing these reconstructions as well as error estimates for the
ideal case.

\subsection{SIAC post-processors}
\label{sec:siac}
One example of a superconvergent post-processor that we examine is the
Smoothness Increasing Accuracy Conserving (SIAC) filter. The SIAC
filter has its roots in an accuracy-enhancing post-processor developed
by Bramble and Schatz \cite{BrambleSchatz:1977}. The original analysis
was done for finite element approximations for elliptic equations. 
This technique has desirable qualities including its locality,
allowing for efficient parallel implementations, and its effectiveness
in almost doubling the order of accuracy rather than increasing the
order of accuracy by one or two orders. This post-processor was also
explored from a Fourier perspective and for derivative filtering by
Thome\'{e} \cite{Thomee1977} and Ryan and Cockburn \cite{ryan2009}.

SIAC filters are an extension of the above ideas and have
traditionally been used to reduce the error oscillations and recover
smoothness in the solution and its derivatives for visualization
purposes
\cite{MirzaeeRyanKirby:2010,WalfischRyanKirbyHaimes:2009,Steffanetal}
or to extract accuracy out of existing code \cite{ryan2005}.
It has been extended to a variety of PDEs as well as meshes
\cite{convdiff}.  A quasi-interpolant perspective on SIAC can be found
in \cite{MirzargarRyanKirby:2016}.  The important property of these
filters is that, in addition to increasing the smoothness, for smooth
initial data and linear problems, the filtered solution is more
accurate than the DG solution.  To combat the high computational cost
of the tensor-product nature of the multi-dimensional kernel, a line
filter was introduced in
\cite{Docampo-SanchezRyanMirzargarKirby:2017}.


For ease of presentation the following discussion only details the
design of the filter and presents a-priori error estimates for the case of
a smooth solution. Although the discussion is limited to
one-dimension, it can be extended to Cartesian meshes in more than one
space dimension using a tensor product approach.  More advanced
applications of the multi-dimensional SIAC post-processor are the
Hexagonal SIAC \cite{Mahsa} or Line SIAC \cite{Docampo-SanchezRyanMirzargarKirby:2017}.

The basic idea is that the reconstruction is done via convolution post-processing:
\[
\uhs(\bar{x})=K^{2r+1,m+1}_{H} * u_h = \frac{1}{H}\int_{-\infty}^{\infty}\, K\left(\frac{\bar{x}-y}{H}\right)u_h(y)\, dy,
\]
where $h$ is the mesh size of the numerical scheme and $H$ is the scaling of the post-processor.  The convolution kernel, $K^{2r+1,m+1}(\cdot),$ is defined as
\[
K^{2r+1,m+1}(x) =
\sum_{\gamma=-r}^{r}c_{\gamma}^{2r+1,m+1}\psi^{(m+1)}\left(x-\gamma\right).
\]
This is a linear combination of $2r+1$ shifted copies of some function,  $\psi^{(m+1)}(x).$  The function weights are real scalars, $c_{\gamma}^{2r+1,m+1} \in \mathbb{R}.$
For the kernel, $r$ is chosen to satisfy consistency as well as $2r$ moment requirements, \ie polynomial reproduction conditions, which are necessary for preserving the accuracy of the Galerkin scheme  and $m$ is chosen for smoothness requirements. 
We focus on kernels built from B-splines which are defined via the B-Spline recurrence relation:
\begin{equation}\label{eq:spline-rec}
  \begin{split}
    \psi^{(1)}
    &=
    \chi_{[-1/2,1/2]}
    \\
    \psi^{(m+1)}
    &=
    \frac{1}{m}\left[\left(x+\frac{m+1}{2}\right)\psi^{(m)}\left(x+\frac{1}{2}\right)+\left(\frac{m+1}{2}-x\right)\psi^{(m)}\left(x-\frac{1}{2}\right)\right],
  \end{split}
 \end{equation}
for $m\geq1$.

%
%
%

\begin{remark}[Kernel scaling.]
For cartesian grids, the kernel scaling is typically chosen to be the element size, $H=h$.  In adaptive meshes or structured triangular meshes and tetrahedral meshes, $H$ is typically chosen to be the length of the mesh pattern \cite{StrucTri}. For Line SIAC, the kernel scaling is taken to be the element diagonal \cite{Docampo-SanchezRyanMirzargarKirby:2017}, for unstructured meshes, the kernel scaling is taken to be largest element side \cite{SIACunstrct,TetMesh}.
\end{remark}

It can be shown that when the solution is sufficiently smooth the
post-processed numerical solution $\uhs$ is a superconvergent
approximation.

In particular, if $u\in\cont{\infty}(\W)$, then the Galerkin solution
converges in Theorem \ref{the:error-est} as
\begin{equation}
  \Norm{ u - u_h}_{\leb{2}(\Omega)} = \Oh(h^{p+1}).
\end{equation}
If we choose
$r=p$ and $m = p-2$ then
\begin{equation}
 \Norm{ u - \uhs}_{\leb{2}(\Omega)} = \Oh(h^{2p}),
\end{equation}
see Theorem 1 in \cite{Thomee1977,BrambleSchatz:1977}, which for $p
\geq 2$ constitutes an improvement. It is possible to obtain the same
estimates in $\sobh{1}$ by taking higher order B-Splines.

In this paper, in order to apply the post-processor globally, we
mirror the underlying approximation as an odd function at the boundary as discussed in
\cite{BrambleSchatz:1977}.  

 \begin{remark}[Impact of in-cell regularity of $\uhss$]\label{rem:icr}
   If $\uhs,\uhss \not \in \sobh{2}(\T{})$ Theorem
   \ref{the:primalerror2} does not hold. Still, as long as $\uhs \in
   \sobh{1}(\T{})$ similar results can be obtained by slightly
   modifying the proof of Theorem \ref{the:primalerror2}.  One
   interesting example is SIAC reconstruction with $m=0$. In this
   case, for any $K \in \T{}$ the restriction $\uhss|_K$ contains
   several kinks, i.e. there are hypersurfaces (points for $d=1$, lines for $d=2$) across which $u$ is continuous but not differentiable.
   However, for any $K$ there exists a triangulation of  $\T{}_K$ of $K$, such that $\uhss|_K \in \sobh{2}(\T{}_K)$.
    If we follow the steps of the proof of Theorem
   \ref{the:primalerror2} we realise that integration by parts can
   only be carried out on elements of $\cup_{K \in \T{}} \T{}_K$ and
   each term $\Norm{h_K ( f + \Delta \uhss)}_{\leb{2}(K)}^2$ in the
   error bound needs to be replaced by
   \begin{equation}
     \sum_{T \in \T{}_K} \Norm{h_K ( f + \Delta \uhss)}_{\leb{2}(T)}^2  + \frac 1 2\sum_{e \in \cE_K}\Norm{h_K^{\frac{1}{2}} \jump{\nabla \uhss}  }_{\leb{2}(e)}^2,
   \end{equation}
   where  $\cE_K$ denotes the set of interior
   edges of $\T{}_K$.
  Efficiency of this modified estimator can be shown along the same
  lines as in Theorem \ref{the:efficiency} but bubble functions with
  respect to the elements and edges in the sub-triangulation $\T{}_K$
  need to be used.
 \end{remark}

\subsection{Superconvergent Patch Recovery}
\label{sec:patchrecovery}

The second post-processing operator we study is based on the
superconvergent patch recovery (SPR) technique. This was originally studied
numerically and showed a type of superconvergence for elliptic
equations using finite element approximations
\cite{Zienkiewicz:1987}. The mathematical theory behind this recovery
technique was addressed by Zhang and Zhu \cite{ZhangZhu:1995} for the
two-point boundary value problems and for two-dimensional problems and
extended to parabolic problems in
\cite{Leykekhman:2006,LakkisPryer:2011c}. The superconvergent patch
recovery method works by recovering the derivative approximation
values for one element from patches surrounding the nodes of that
element using a least squares fitting of the superconvergent values at
the nodes and edges.  In typical derivative recovery, the derivative
approximation is a continuous piecewise polynomial of some given degree. For
overlapping patches, the recovered derivative is just an average of
the approximations obtained on the surrounding patches.  Unlike SIAC
post-processing, this recovery technique does not rely on translation
invariance for the high-order recovery.  
The superconvergent patch recovery technique has been shown
to work well for elliptic equations that have a smooth solution, and
for less smooth solutions with a suitably refined mesh.

The usual application
of this technique is for gradient recovery.  However, in this article
we apply this technique to recover function values.

As mentioned, we suitably modify the algorithm
{to construct a function $\uhs \in \fes_h^*$ with
$\fes_h^*=\fes_h^{2p+1}\cap C^0(\Omega)$}.
The construction of $\uhs$, given some finite element function
$u_h$, is carried out in two steps:
\begin{enumerate}
  \item Construct a polynomial $q_i$ of order $2p$ at each
    node $v_i$ of the mesh using a least squares fitting of function
    values of $u_h$ evaluated at suitable points in elements surrounding
    $v_i$.
  \item Given an element $K$ we use linear interpolation of the
    values of $q_i$ for the three nodes of $K$ to compute
    $\uhs\in\fes_h^*$.
\end{enumerate}
{There are many approaches for constructing the polynomials $q_i$ in the first step at a given node $v_i$ with surrounding triangles $K'$. For our experiments, we use the following approach. For $p=1$, we construct a quadratic polynomial $q_i$ by fitting the values of $u_h$ at the nodes of all $K'$. For a piecewise quadratic
$u_h$ ($p=2$) we also use the midpoints of all edges of the $K'$.
Finally for our tests with
$p=3$ we evaluate $u_h$ at two points on each edge chosen
symmetrically around the midpoint of the edge
(we use the Lobatto points with local coordinates
$\frac{1}{2}\pm\frac{\sqrt{5}}{10}$) and also add the value of $u_h$ at the barycentre
of the $K'$. This is depicted in Figure~\ref{fig:sprsketch}.}
To guarantee that we have enough function values to compute the
least squares fits, we add a second layer of triangles around $v_i$ if necessary, e.g., at boundary nodes.

Note that this procedure is similar, although not the same,
as the approach investigated in \cite{ZhangNaga:2005}.
Another related procedure was proposed in \cite{Ovall:2007}.

\begin{figure}
\centering
\includegraphics[width=0.8\textwidth]{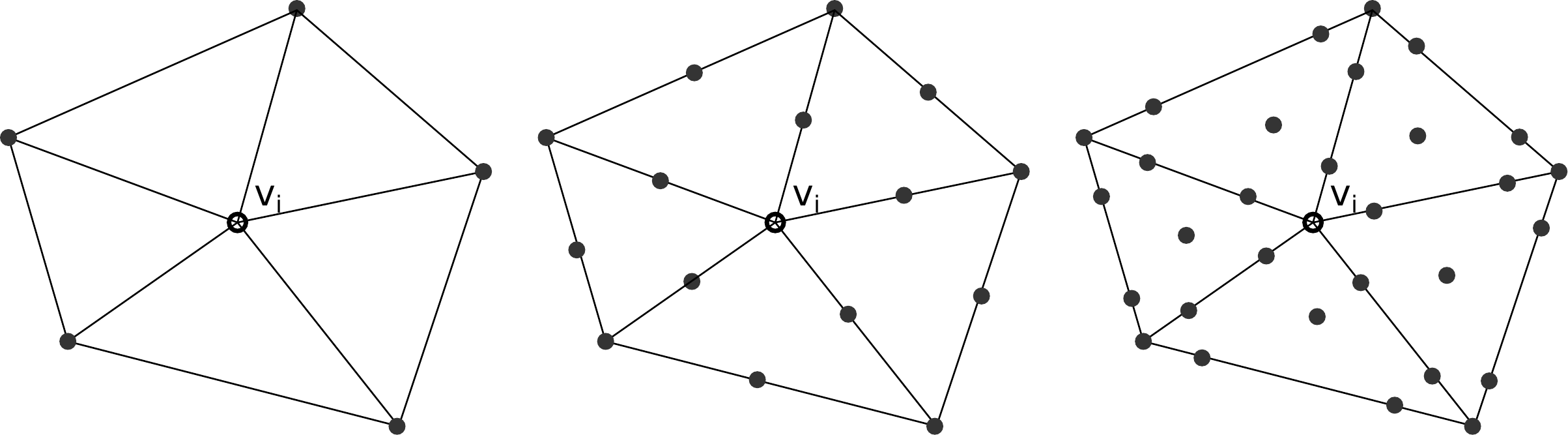}
\label{fig:sprsketch}
\caption{{Evaluation points of $u_h$ used for the least squares fit of a polynomial at node $v_i$ for $p=1,2,3$ (from left to right).}}
\end{figure}

\section{Numerical Results}
\label{sec:numerics}
In this section we study the numerical behaviour of the error
indicators proposed for the SIAC and SPR post-processing operators. We
compare this behaviour with the true error on some typical model problems. The
computational work was done in the DUNE package
\cite{BastianBlattDednerEngwerKlofkornKornhuberOhlbergerSander:2008}
based on the new Python frontend for the DUNE-FEM module
\cite{Dune-Fem,Dune-Py}.

\subsection{Smoothness-Increasing Accuracy-Conserving post-processors}

The implementation of the post-processor
is done through simple matrix-vector multiplication and is discussed
in \cite{mirzaee2012}.

We first investigate the behaviour of the error and the residual
estimator for the problem \eqref{ellipt} in one space dimension with $\D = 1$, i.e. the
Laplace problem
\begin{equation}
\begin{split}
- u'' &= f \text{ in } \W
\\
u&=0  \text{ on } \partial \W,
        \end{split}
\end{equation}
where the forcing function $f$ is chosen so that
the exact solution is
\begin{equation}
u(x)=\sin{6\pi x}^2\cos{\frac{9}{2}\pi x}
\end{equation}
on the interval $(0,1)$. We show both the $\leb{2}$ and $\sobh1$ errors
for the Galerkin approximation $u_h$, the SIAC postprocessed approximation, $\uhs$
and the orthogonal postprocessor, $\uhss$. We also show the two
residual indicators $R_h$ (from Theorem \ref{the:error-est}) and $R^{**}$ 
from (Theorem \ref{the:primalerror2}). For the basis we consider the continuous Lagrange
polynomials for $u_h$ and impose the boundary conditions weakly with a penalty
parameter $\frac{10p^2}{h}$, where $p$ is the polynomial degree and $h$
is the grid spacing. Additional experiments were conducted using a discontinuous Galerkin
approximation, but no significant differences in the outcome where found and therefore do not include
the results.  We solve the resulting
linear system using an exact solver \cite{Davis:2004} to avoid issues
with stopping tolerances.

We will mainly focus on $p=2$ but also show results for $p=1$ and
$p=3$.  The SIAC postprocessing is constructed using a continuous
B-spline, $m=1,$ as well as setting $r=\ceil{\frac{p+1}{2}}.$ %
This leads to an inner stencil of
$2\ceil{r+\frac{1}{2}-1}+1 = 2\ceil{\frac{p+1}{2}+3}$ elements. 
We also tested other choices of
$r,\, m$ for $p=2$ but the above choice provided the best results and 
these are the results shown.

In Figure~\ref{fig:p2errors} we show the errors for $p=2$ for a series
of grid refinement levels starting with $20$ intervals and doubling
that number on each level. In Figure~\ref{fig:p2eocs} we plot the
corresponding Experimental Orders of Convergence (EOCs).  As can clearly be
seen, SIAC postprocessing ($\uhs$) improves the convergence
rate in $\sobh 1$ from $2$ to $3$ and in $\leb 2$ from $3$ to $4$.  While in
$\sobh 1$ the Galerkin orthogonality trick only leads to a small
improvement in the error, in $\leb 2$ we see an improvement of a full
order leading to a convergence rate of $5$. As expected from the
theory the residual indicators follow the $\sobh 1$ errors of $u_h$ and
$\uhss$ closely.  The efficiency index is comparable between $R_h$ and
$R^{**}$.

For a better understanding of how the error is reduced by utilizing SIAC
postprocessing and  the Galerkin orthogonality treatment, we show the
pointwise errors of the approximations in Figure~\ref{fig:p2pointwise}. It is
evident that the function values are much smoother when applying SIAC.  The move
from $\uhs$ and $\uhss$ does reintroduce  small scale errors, but at a
far lower level compared to the original approximation, $u_h$. As expected
from the errors, the differences in $\sobh 1$ are less pronounced.

\begin{figure}
\centering
\includegraphics[width=0.45\textwidth]{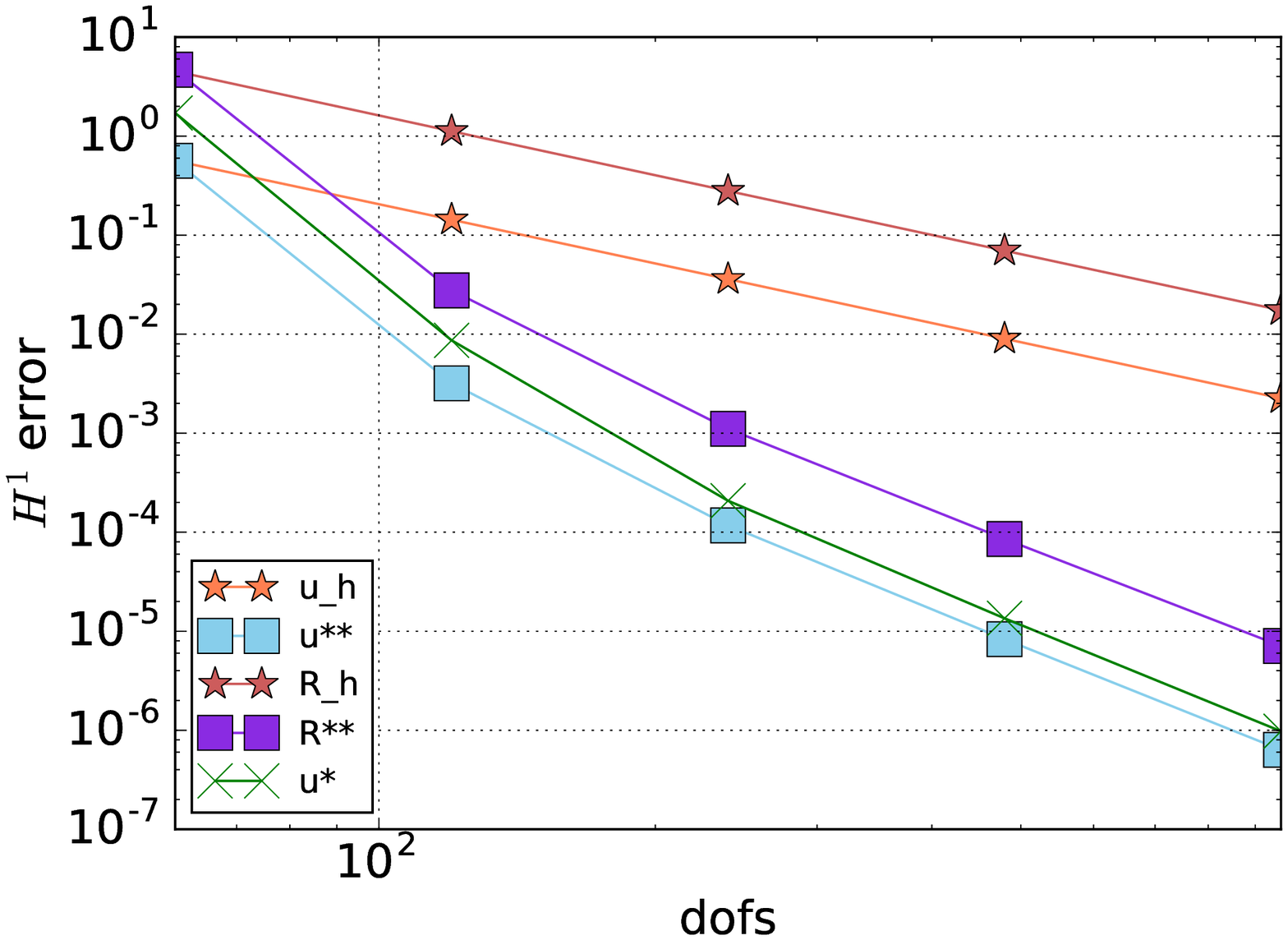}
\includegraphics[width=0.45\textwidth]{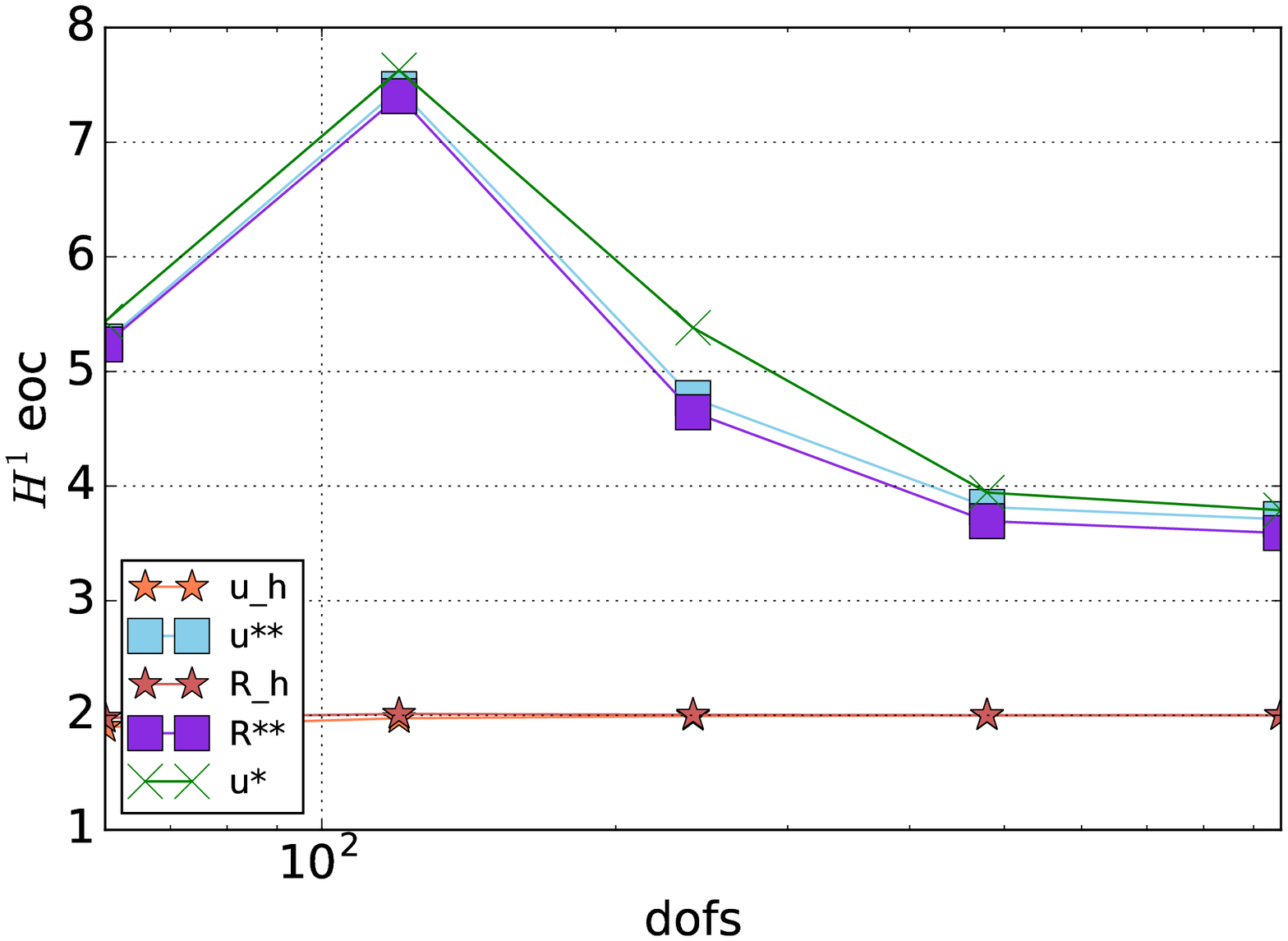}
\\
\includegraphics[width=0.45\textwidth]{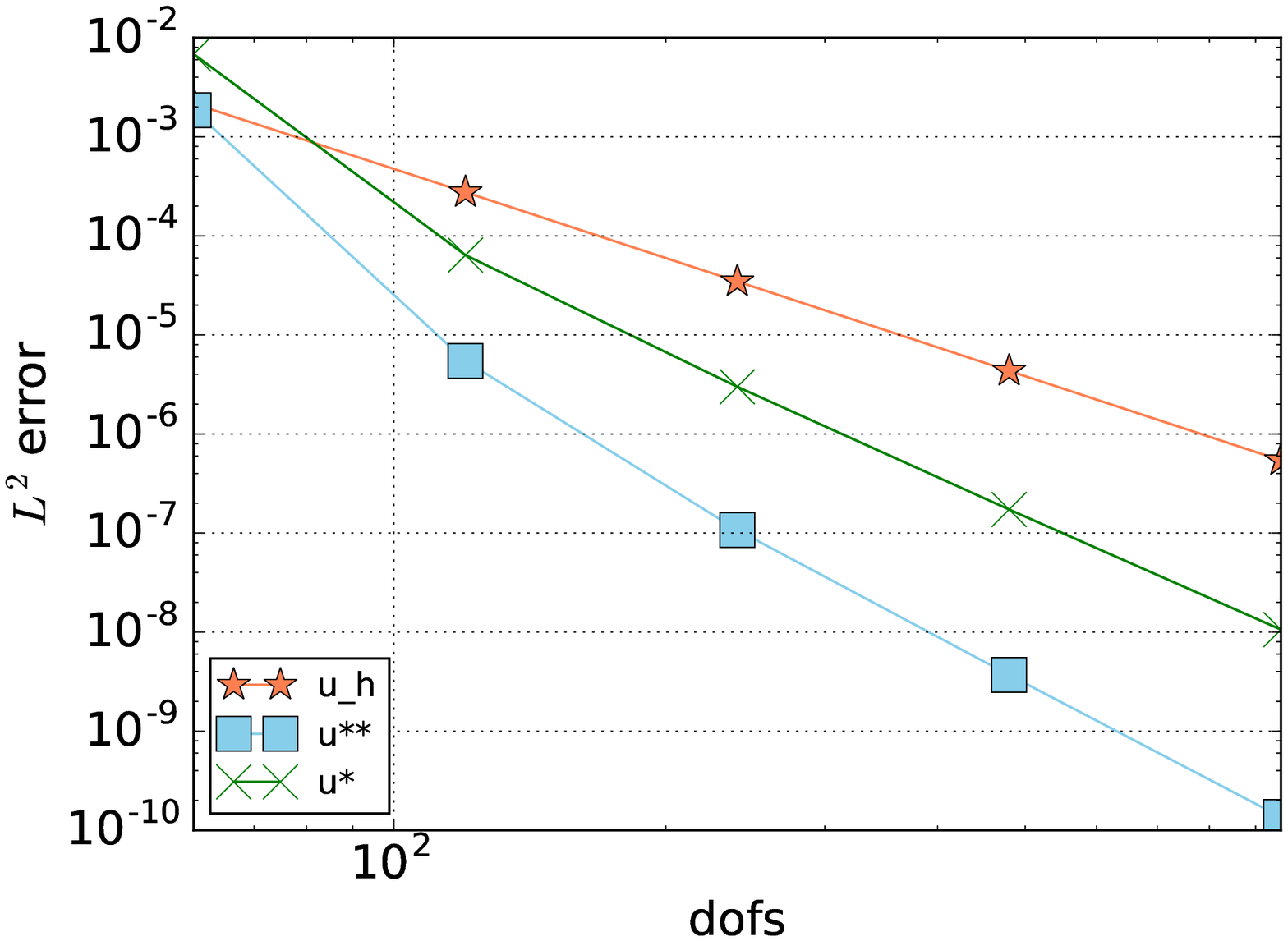}
\includegraphics[width=0.45\textwidth]{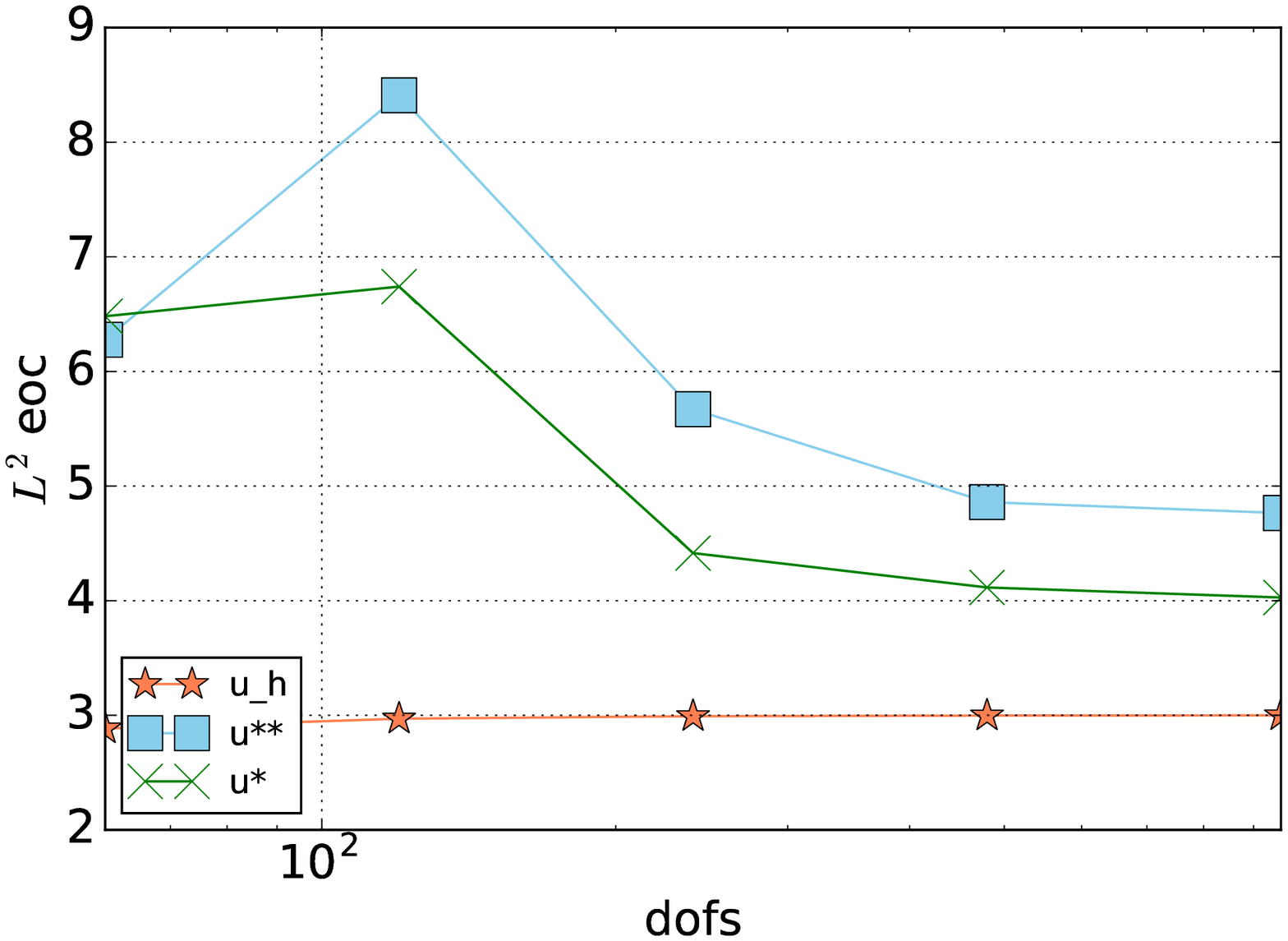}
\caption{Errors and convergence rates for $\sobh1$ (left two) and $\leb2$
  (right two) for polynomial degree $p=2$ using a SIAC reconstruction.}
\label{fig:p2errors}
\label{fig:p2eocs}
\end{figure}

\begin{figure}
\centering
\includegraphics[width=0.45\textwidth]{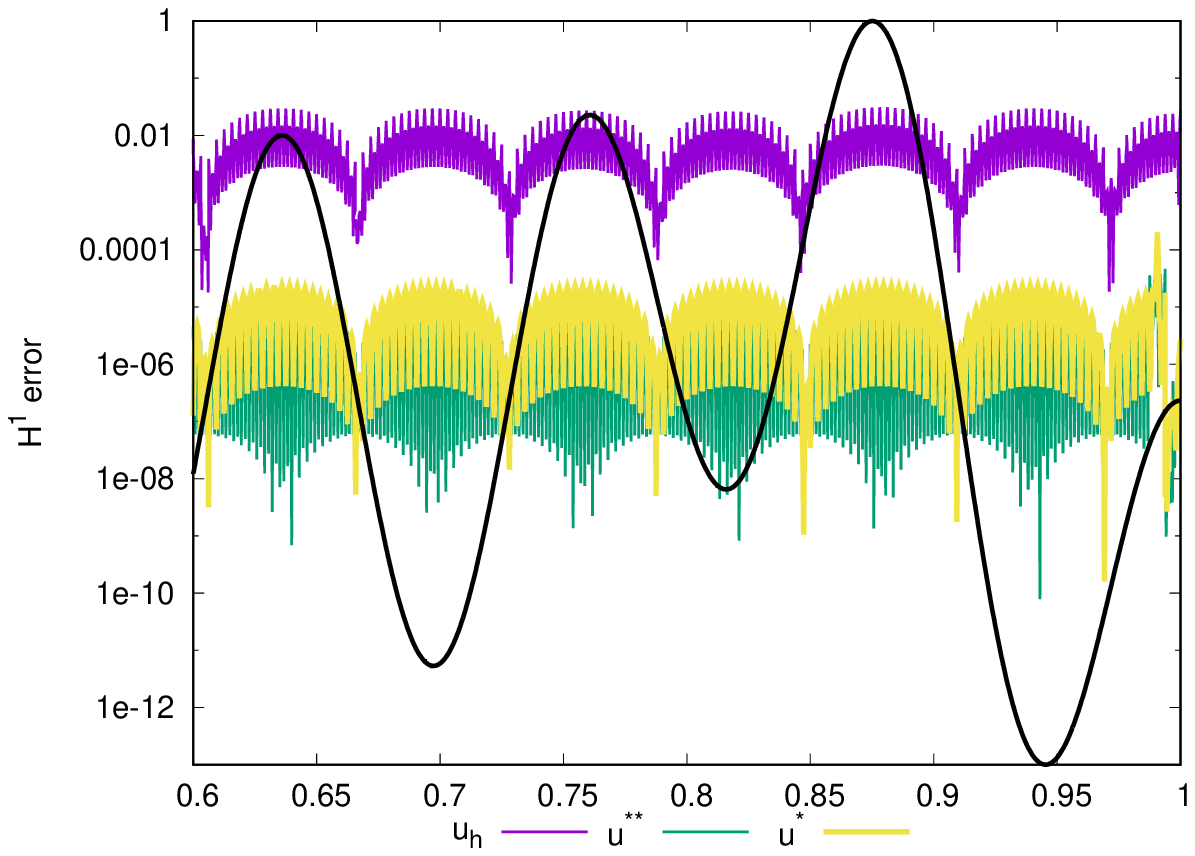}
\includegraphics[width=0.45\textwidth]{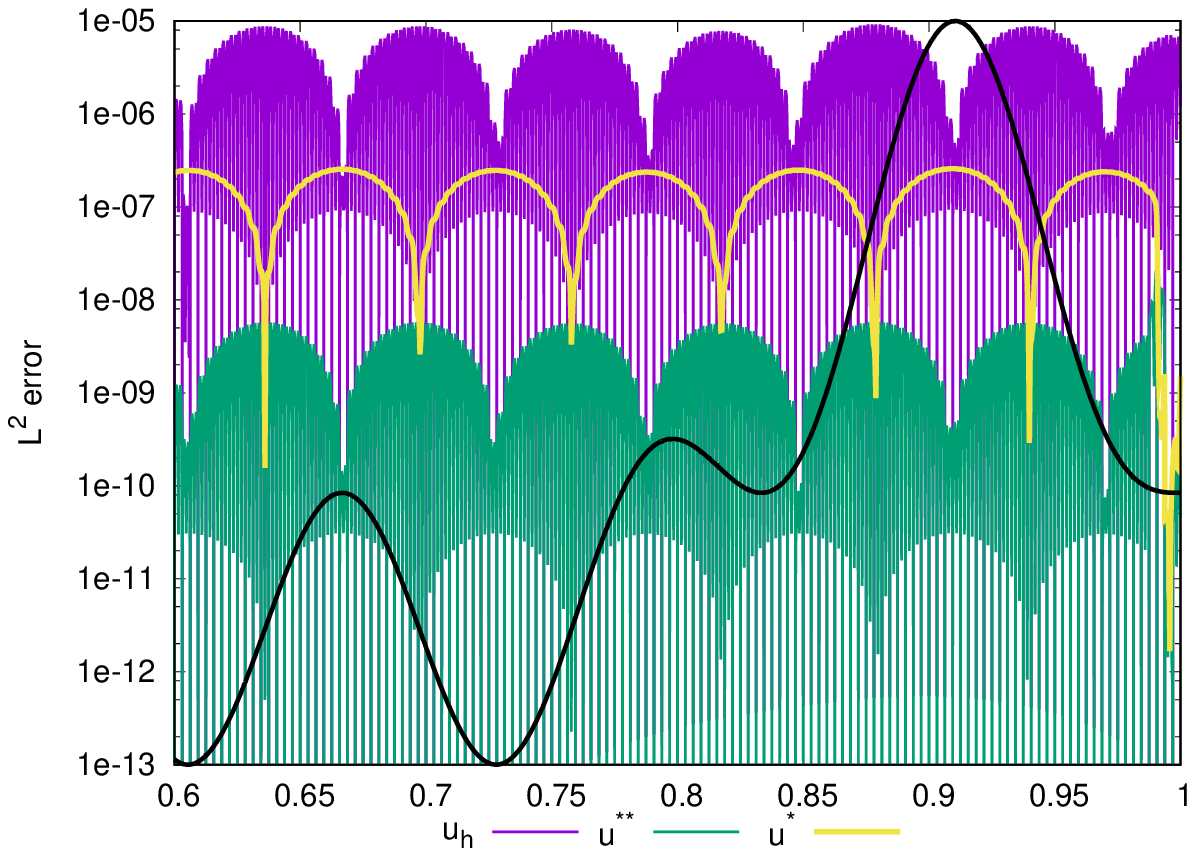}
\caption{Pointwise errors for the three different solutions on the right half
of the interval, evaluating the function in a number of points per
interval. These are results with $p=2$ and $h=1/320$ showing the errors in
the gradient (left) and in the function values (right).}
\label{fig:p2pointwise}
\end{figure}

In Figures \ref{fig:p3errors} we show errors and EOCs for $p=3$.
Due to the very low errors on the final grid the actual convergence rates
for $\uhs$ and $\uhss$ is not clear.  However, the improvement due to the Galerkin orthogonality trick, especially
in $\leb2$  is quite noticable as it reduced 
the error by two orders of magnitude.
\begin{figure}
\centering
\includegraphics[width=0.45\textwidth]{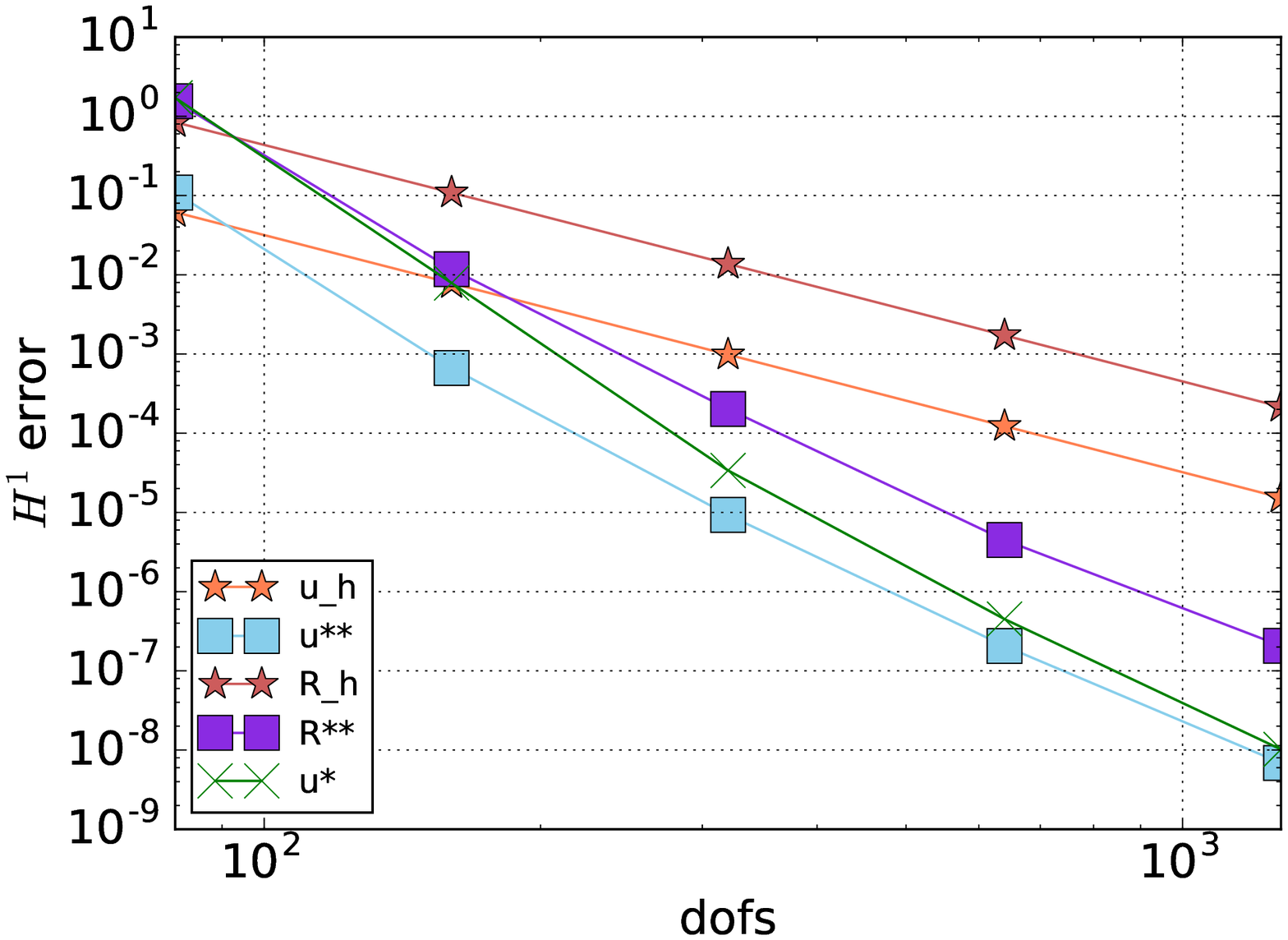}
\includegraphics[width=0.45\textwidth]{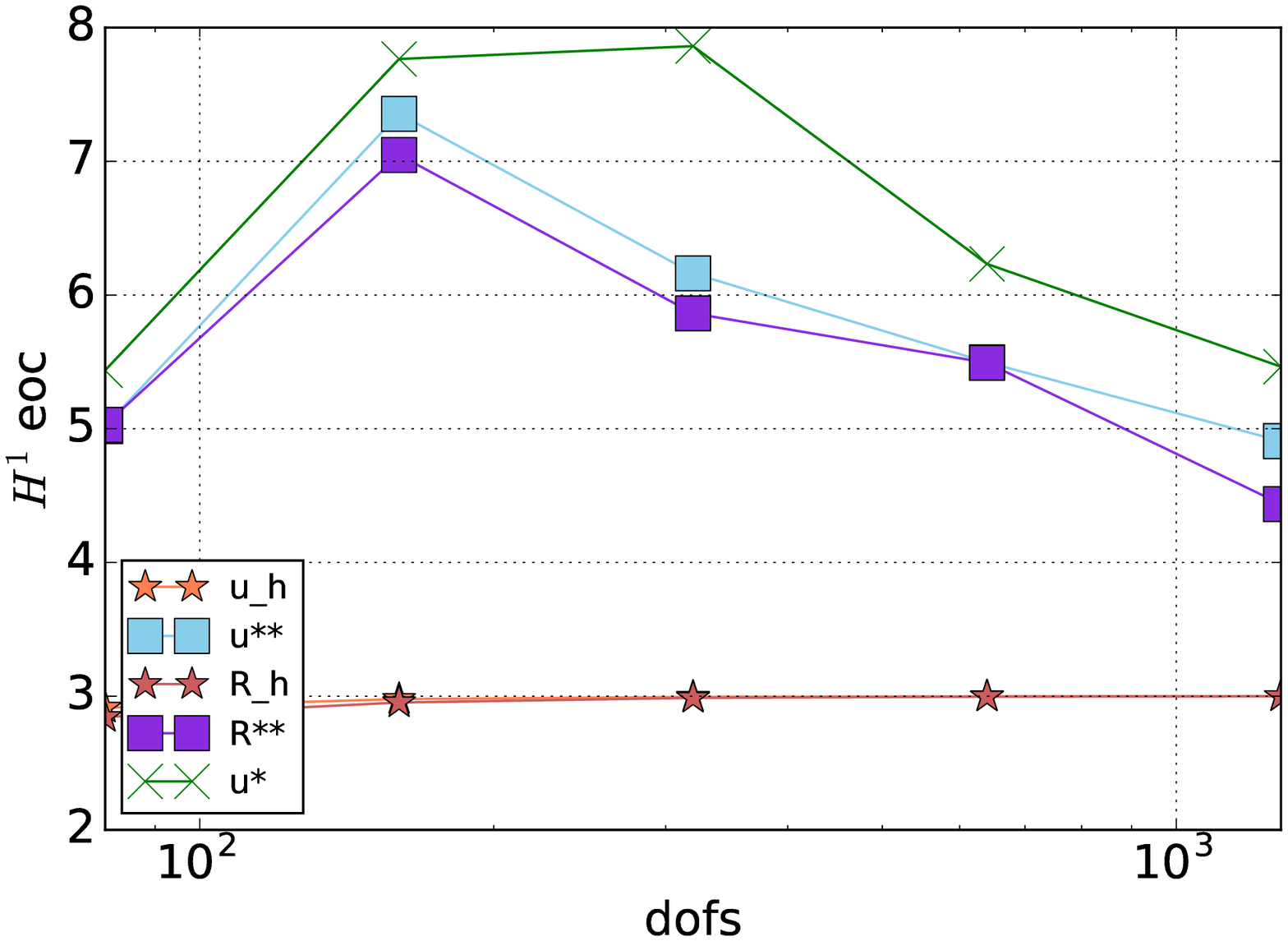}
\\
\includegraphics[width=0.45\textwidth]{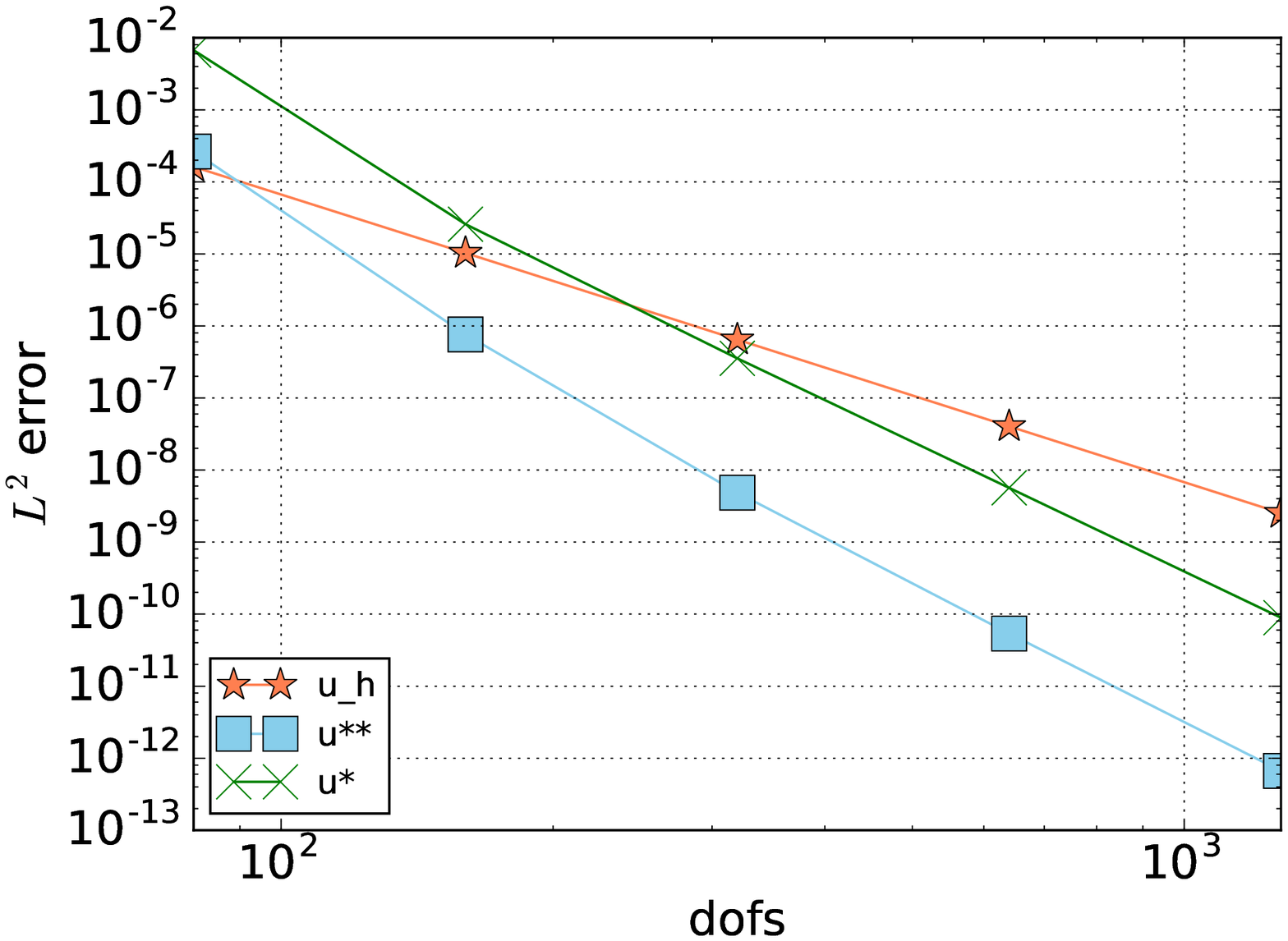}
\includegraphics[width=0.45\textwidth]{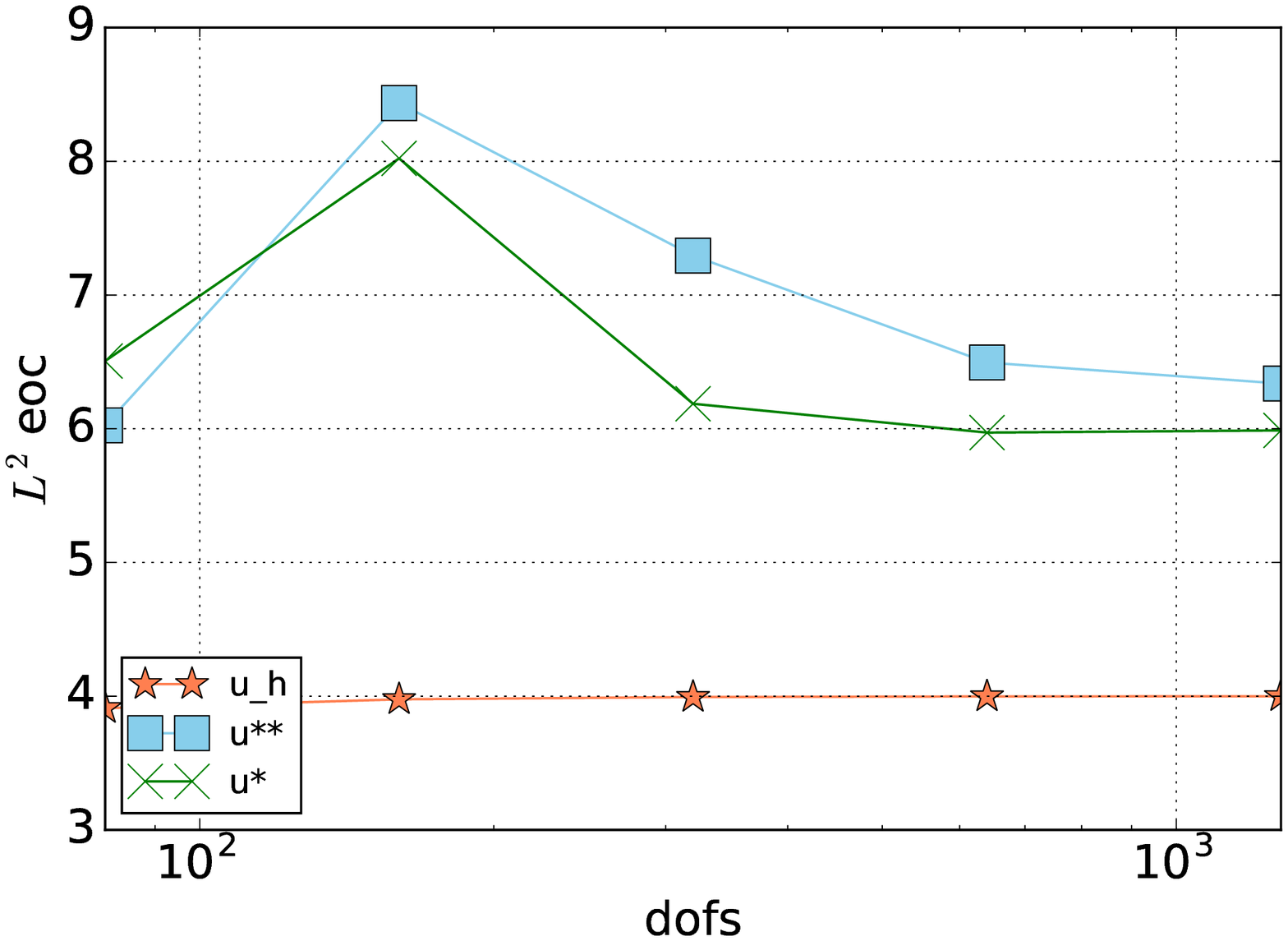}
\caption{Errors and convergence rates for $\sobh1$ (left two) and $\leb2$
  (right two) for polynomial degree $p=3$ using SIAC reconstruction.}
\label{fig:p3errors}
\label{fig:p3eocs}
\end{figure}


We next show results for $p=1$ in Figure~\ref{fig:p1errors}.  Again, there is 
a clear reduction in the values of the errors from $u_h$ to $\uhs$ to
$\uhss$ in $\sobh1$ together with an improvement in the convergence rate due to
the SIAC postprocessing.  This improvement in the convergence rate is about $1$ order.
While the convergence rate  going from $u_h$
to  $\uhss$ seems to only be half an order on the higher grid resolution, it is important to
note that the error 
using $\uhss$ is still significantly smaller
than the error between the exact solution and $u^{*}$ by at least a factor
of $2$.  Hence the results do not contradict the theory.
In $\leb2$, SIAC leads to no improvement while the convergence rate of the
error using $\uhss$ is at least half an order higher. Overall the
improvement in the convergence rate is not quite as good as for the higher
polynomial degrees.
The following tests summarized in
Figure~\ref{fig:p1superweakerrors}
show that the weak form of the boundary conditions is responsible for the
reduced order improvement. The figure shows results using a hyperpenalty of
the form $\frac{10p^2}{h^2}$. Applying this hyperpenalty term leads to 
 improvements that are again more in line with our observations for higher order
polynomials.  We note that strong enforcement of the boundary conditions also lead to similar
results.

\begin{figure}
  \centering
  \includegraphics[width=0.45\textwidth]{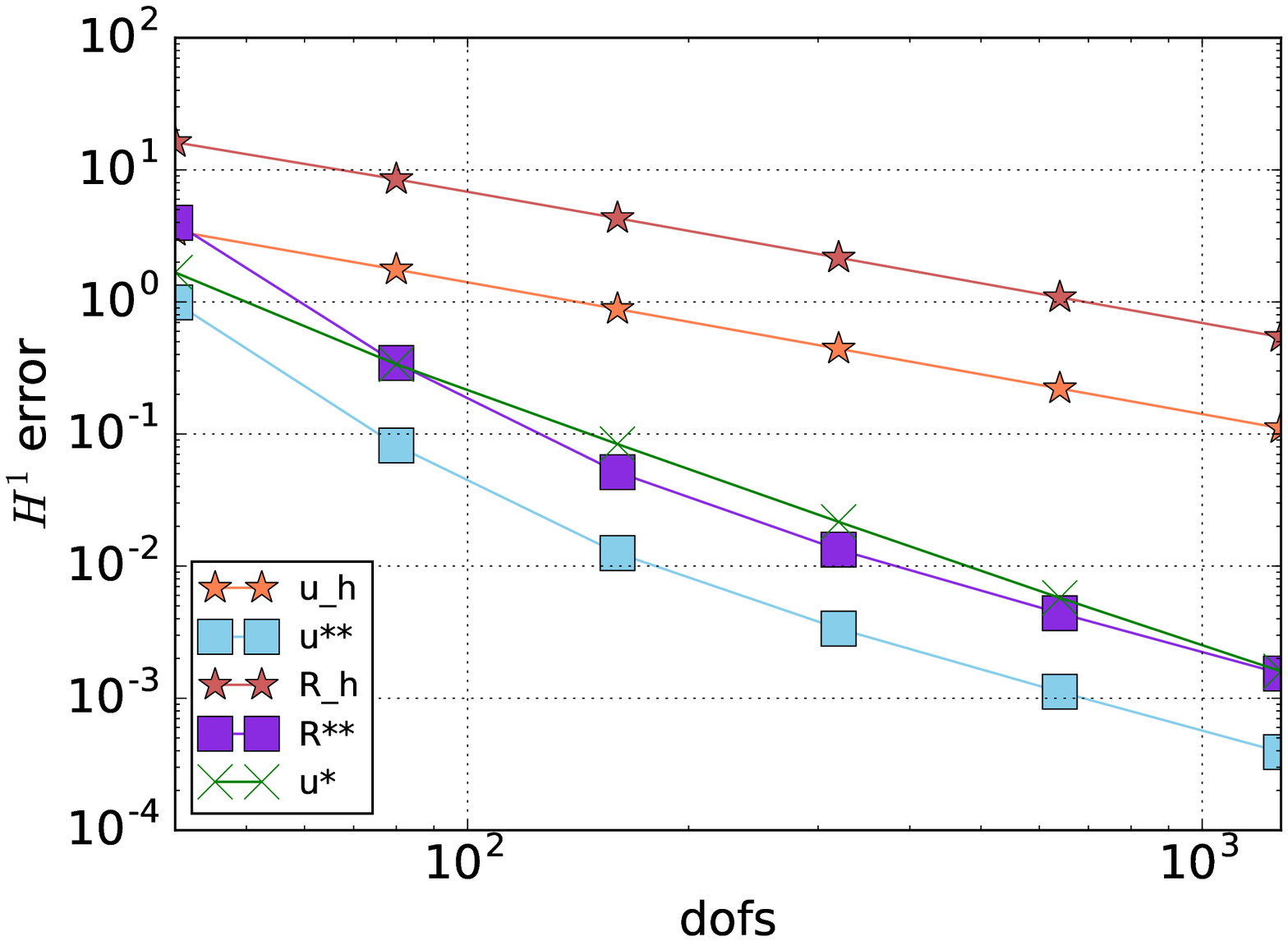}
  \includegraphics[width=0.45\textwidth]{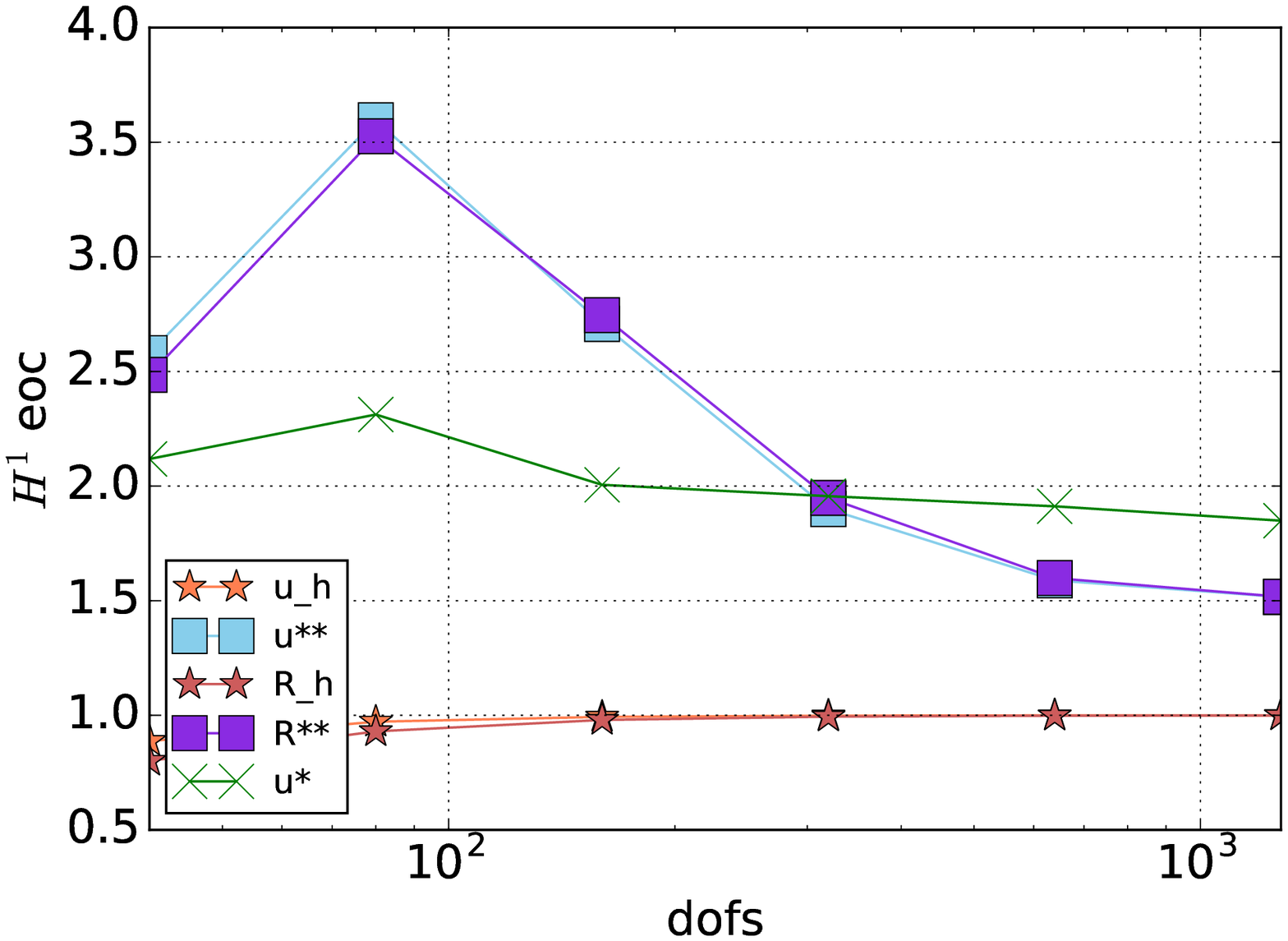}
  \\
  \includegraphics[width=0.45\textwidth]{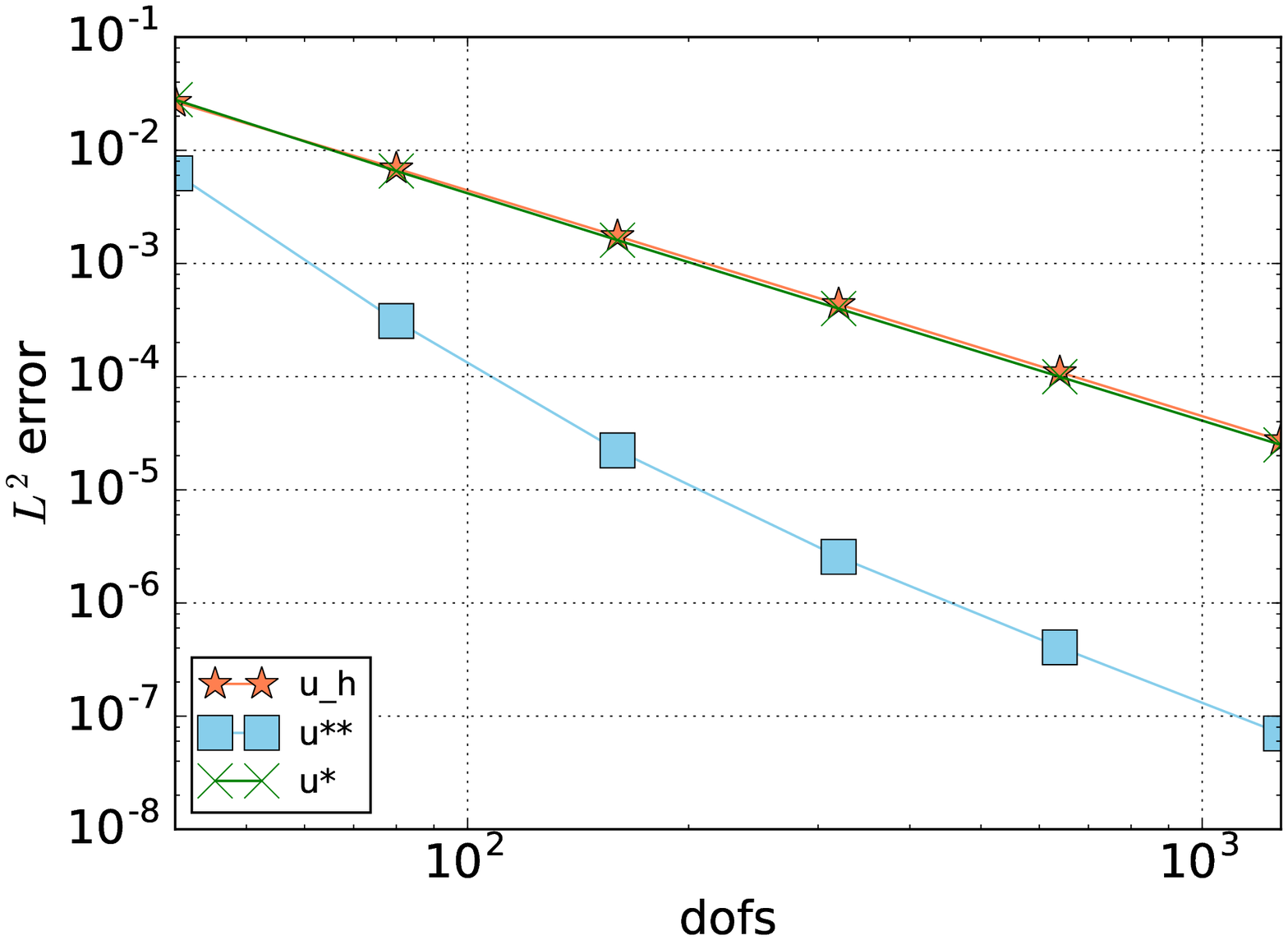}
  \includegraphics[width=0.45\textwidth]{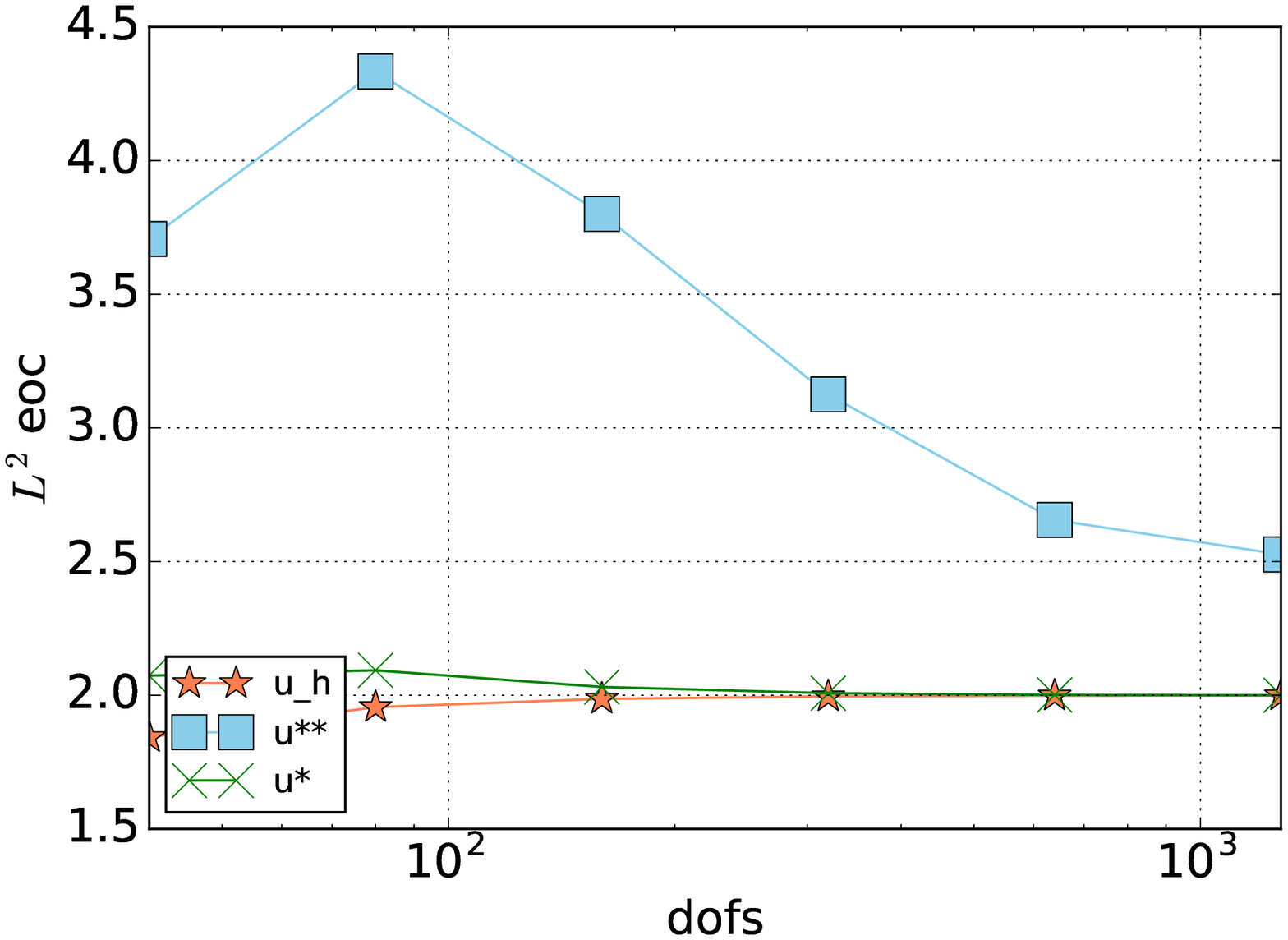}
\caption{Errors and convergence rates for $\sobh1$ (left two) and $\leb2$
  (right two) for polynomial degree $p=1$ using SIAC reconstruction.}
\label{fig:p1errors}
\label{fig:p1eocs}
\end{figure}


\begin{figure}
  \centering
  \includegraphics[width=0.45\textwidth]{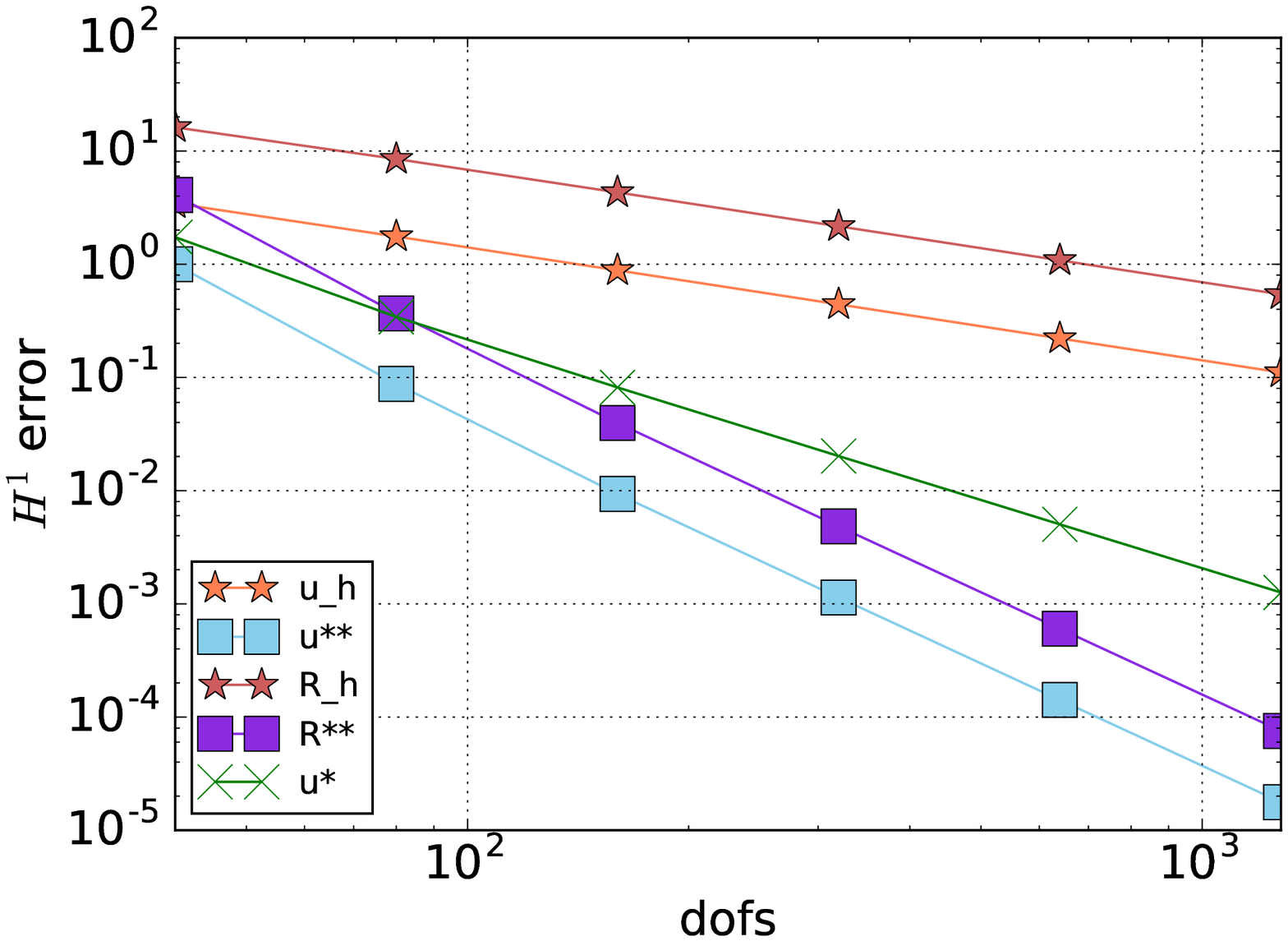}
  \includegraphics[width=0.45\textwidth]{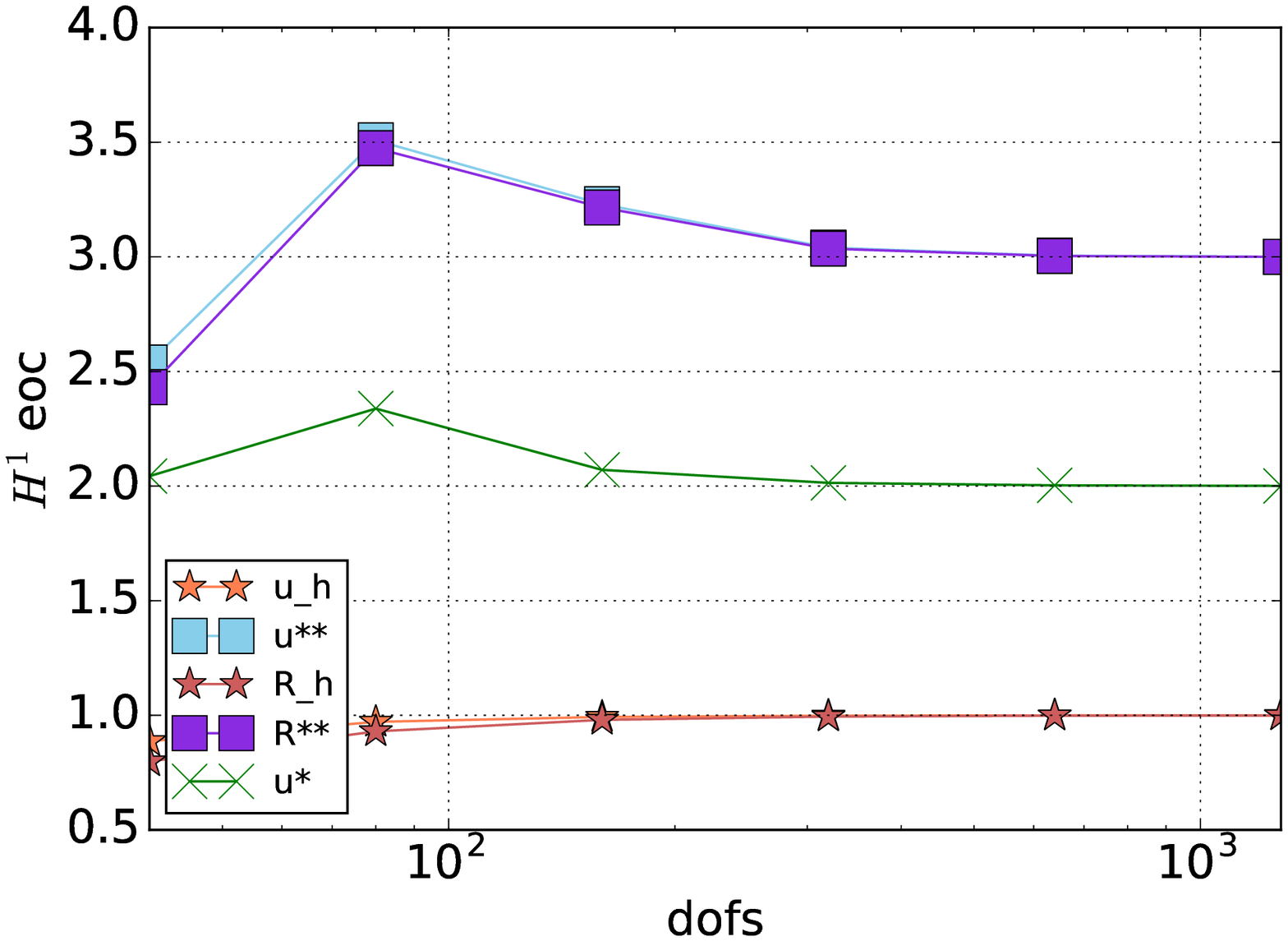}
  \\
  \includegraphics[width=0.45\textwidth]{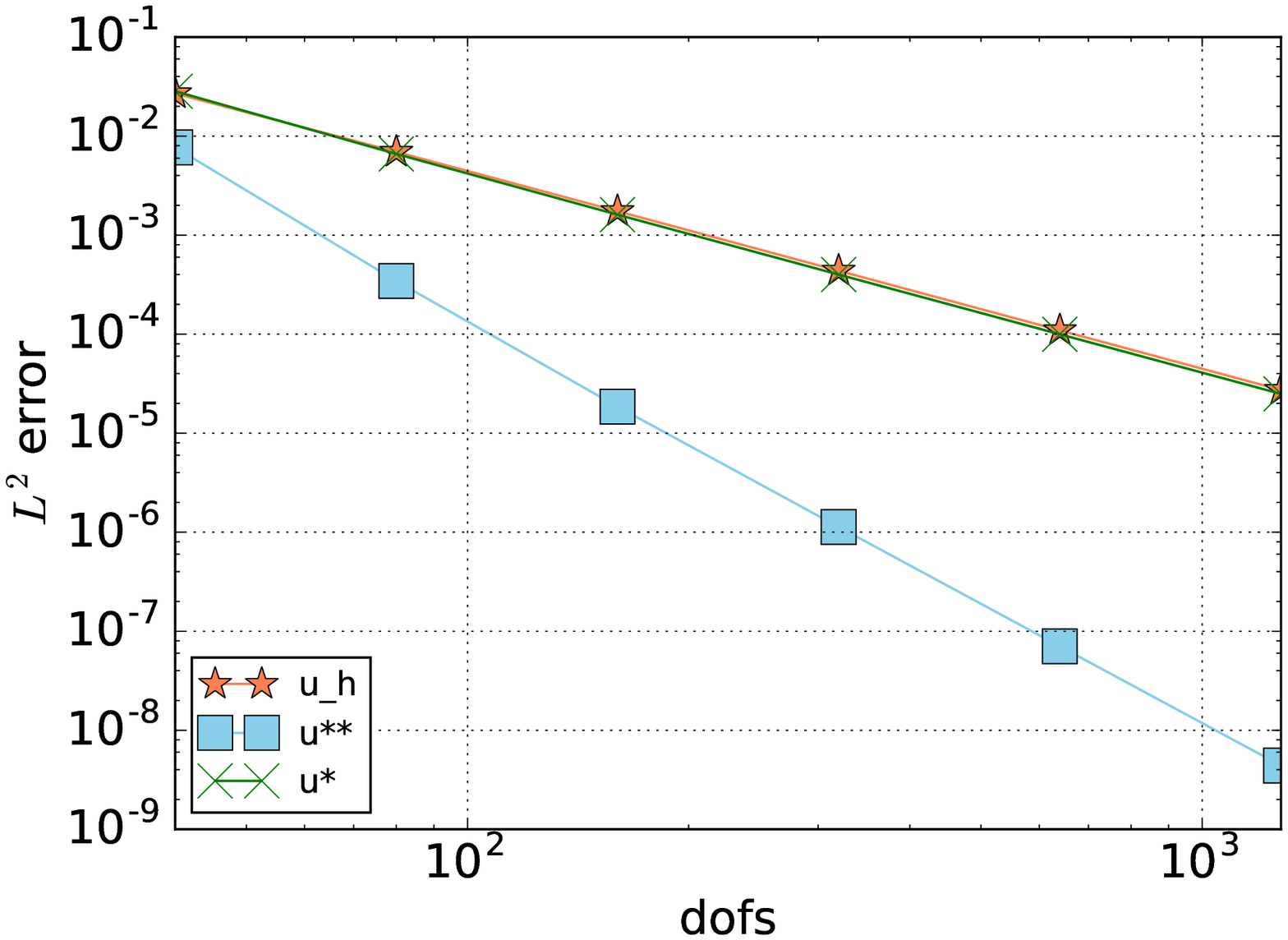}
  \includegraphics[width=0.45\textwidth]{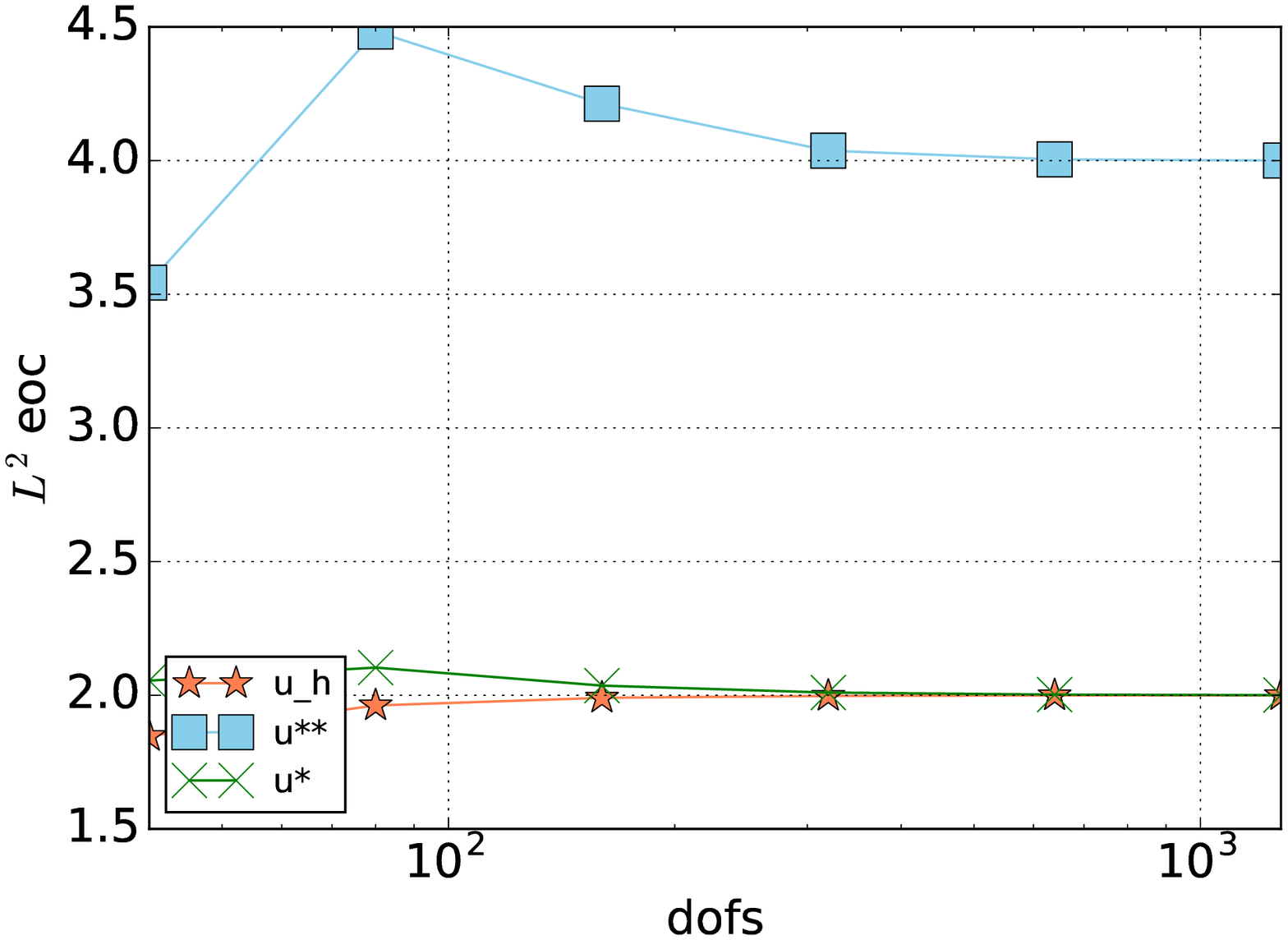}
\caption{Errors and convergence rates for $\sobh1$ (left two) and $\leb2$
  (right two) for polynomial degree $p=2$
  with hyperpenalty at the boundary.}
\label{fig:p1superweakerrors}
\label{fig:p1superweakeocs}
\end{figure}

We summarize our results for the smooth problem in Table~\ref{tab:eoc1d}.
It can be clearly seen that the step from $\uhs$ to $\uhss$, which requires
solving one additional low order problem, is quite advantageous and  increases the convergence
rate in the $\leb 2$ norm by at least one. In the linear case this improves by two
and by one in the $\sobh 1$ norm.  This makes it  highly efficient
in this case, at least when implementing the  hyperpenalization or strong constraints
to enforce the Dirichlet boundary conditions. The reason for this
restriction will be investigated further in future work.
For $p=3$ the actual EOCs of the postprocessed
solutions are difficult to determine and therefore we provide  approximate numbers. In
this case, SIAC shows a higher order in $\leb 2$ compared to the $\sobh 1$
norm. The Galerkin orthogonalty trick does not improve the rate
further, but note that the overall error is still a factor of $100$
smaller. Additionally note that in the other cases where there is no improvement in the rate,
the error is reduced by enforcing Galerkin orthogonality,
e.g., in the $\sobh 1$ norm with $p=2$ the error is still reduced by about a
factor of two. In addition, the orthogonality of $\uhss$ allows us to compute
a reliable and efficient error estimator with a comparable efficiency index
to the error estimator for $u_h$ using $R_h$. 

\begin{table}[htb]
  \centering
  \caption{Experimental rates of convergence for the smooth problem using
  different values for the polynomial degree $p$. The convergence rates are shown for
  the three approximations, i.e., $u_h,\uhs,\uhss$.}
  \label{tab:eoc1d}
  \resizebox{\columnwidth}{!}{%
  \begin{tabular}{cccccccccc}
    \toprule
    &
    \multicolumn{3}{c}{$p=1$}
    &
    \multicolumn{3}{c}{$p=2$}
    &
    \multicolumn{3}{c}{$p=3$}
    \\
    &
    $\EOC(u_h)$ & $\EOC(\uhs)$ & $\EOC(\uhss)$
    &
    $\EOC(u_h)$ & $\EOC(\uhs)$ & $\EOC(\uhss)$
    &
    $\EOC(u_h)$ & $\EOC(\uhs)$ & $\EOC(\uhss)$
    \\
    $\leb2$-error&
    2 & 2 & 4
    &
    3 & 4 & 5
    &
    4 & 6 & 6
    \\
    $\sobh1$-error&
    1 & 2 & 3
    &
    2 & 4 & 4
    &
    3 & 5 & 5
    \\
    \bottomrule
  \end{tabular}}
\end{table}

We conclude our investigations of the SIAC reconstruction and the residual
estimates by studying problems with less smooth solutions. We change the 
forcing function so that the exact solution is of the form
\begin{align*}
  u(x) &= \begin{cases}
    w\left(\frac{x-0.3}{0.4}\right) & x\in (0.3,0.7)~, \\
    0 & \text{otherwise}
  \end{cases}
\end{align*}
where
\begin{equation}
w(s)=\sin{6\pi s}^2\cos{\frac{9}{2}\pi s}
\end{equation}
 is the smooth function from previous studies.  We show results for
polynomial degree $p=2$.  We again iplement a simple $O(h^{-1})$
penalty term at the boundary. Note that the solution is in
$\cont{2}\setminus \cont 3$ at $x=0.7$ and only in $\cont
1\setminus \cont 2$ for $x=0.3$. {Overall the solution is an element of
$\sobh2(0,1)$ but not of $\sobh3(0,1)$, i.e., it is not smooth enough
to achieve optimal convergence rates for $p=2$ when the mesh is not
aligned. Even when the mesh is aligned, as in our experiments, we do
not expect an increase of the convergence rate using the SIAC
reconstruction as can be seen in Figure~\ref{fig:p2q2errors}.}  The
local loss of regularity at $x=0.3$ and $x=0.7$ is clearly visible for
the pointwise errors of the two reconstructions as shown in
Figure \ref{fig:p2q2pointwise}. Examining the errors in the original
approximation, $u_h$, the reduced smoothness is hardly visible.
However, in both of the reconstructions a jump in the error is clearly
visible. At $x=0.7$, where the solution is still in $\cont 2$ the
error in $\uhss$ increases approximately by two orders while at
$x=0.3$ it is close to four orders of magnitude larger since the
solution is only $\cont 1$ at this point. The lack of smoothness is
also identified by the residual indicator $R^{**}$, the spatial
distribution of which is shown in Figure~\ref{fig:p2q2pointwiseres}
together with the distribution of $R_h$.  It is worthwhile to note
that the region of the 'reduced smoothness' is better isolated by
$R^{**}$ than by $R_h$.  Hence it would be easier for an adaptive
algorithm to separate these different smoothness regions which would
lead to more optimal meshes.  The picture clearly shows that $R_h$
does not 'see' the kink so that an adaptive algorithm would either
refine the whole non-constant region or nothing at all depending on
the tolerance.  In contrast, with $R^{**}$ (and the right algorithm)
refinement could be isolated to the kinks.

\begin{figure}
\centering
\includegraphics[width=0.45\textwidth]{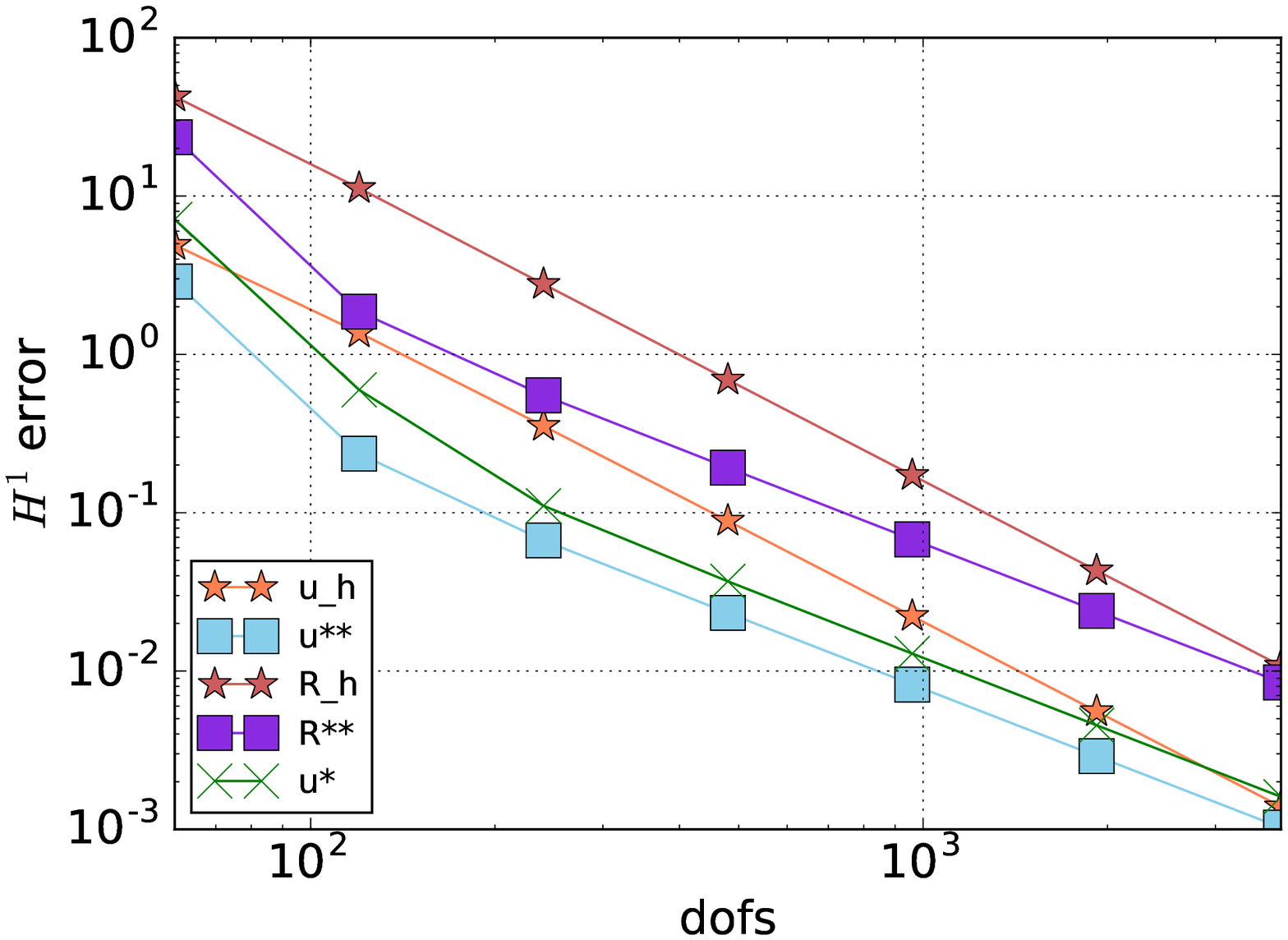}
\includegraphics[width=0.45\textwidth]{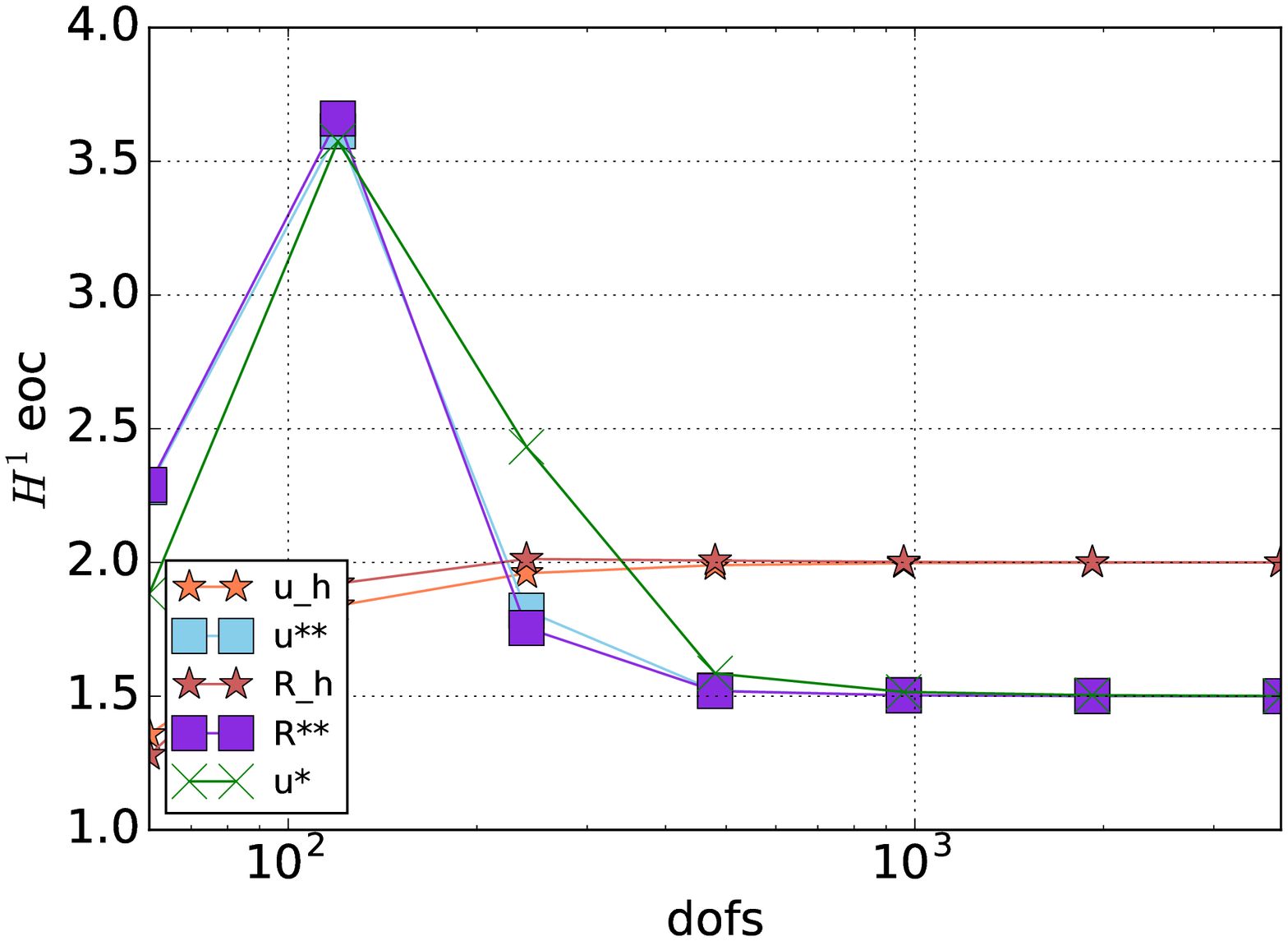}
\\
\includegraphics[width=0.45\textwidth]{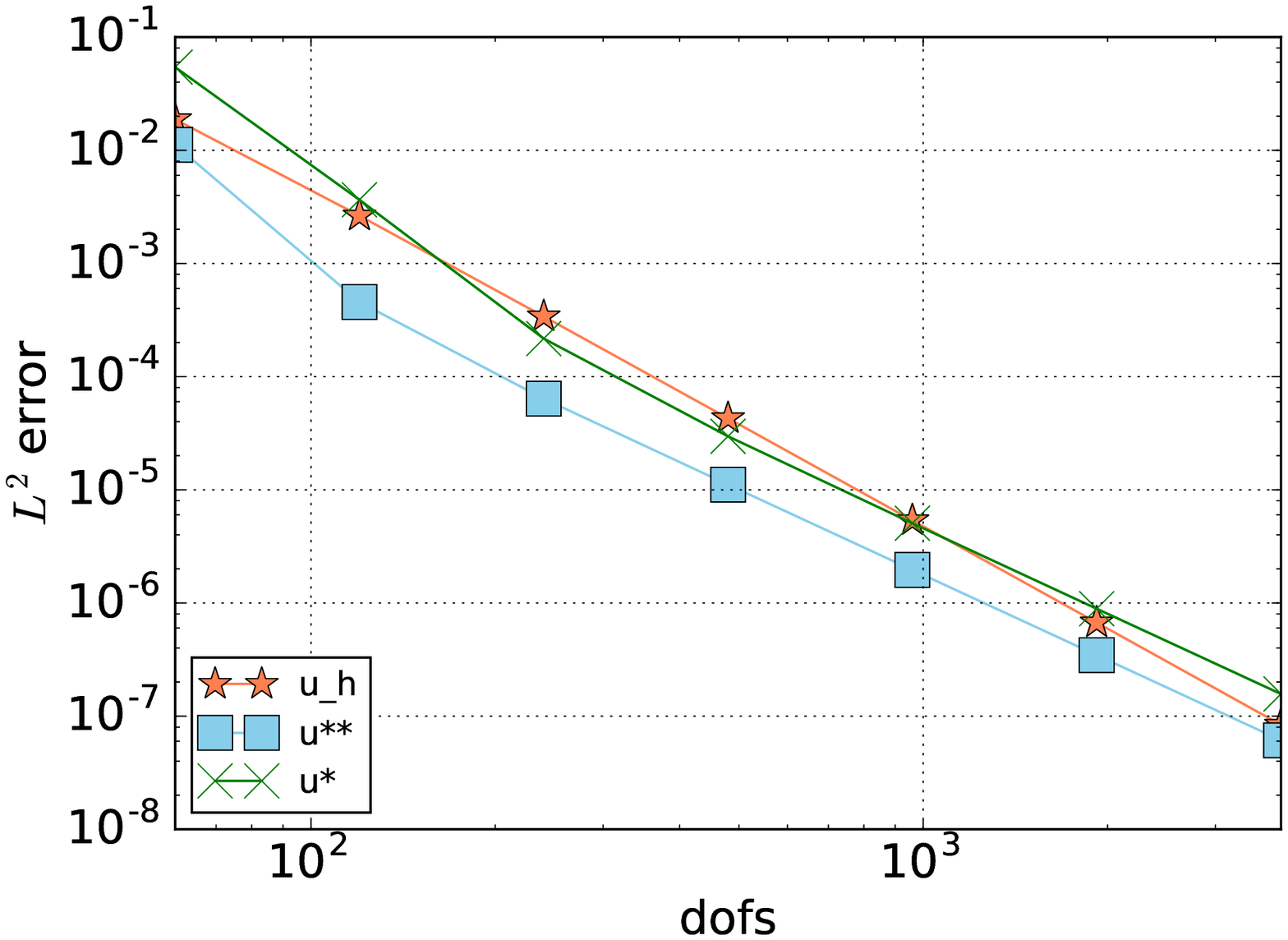}
\includegraphics[width=0.45\textwidth]{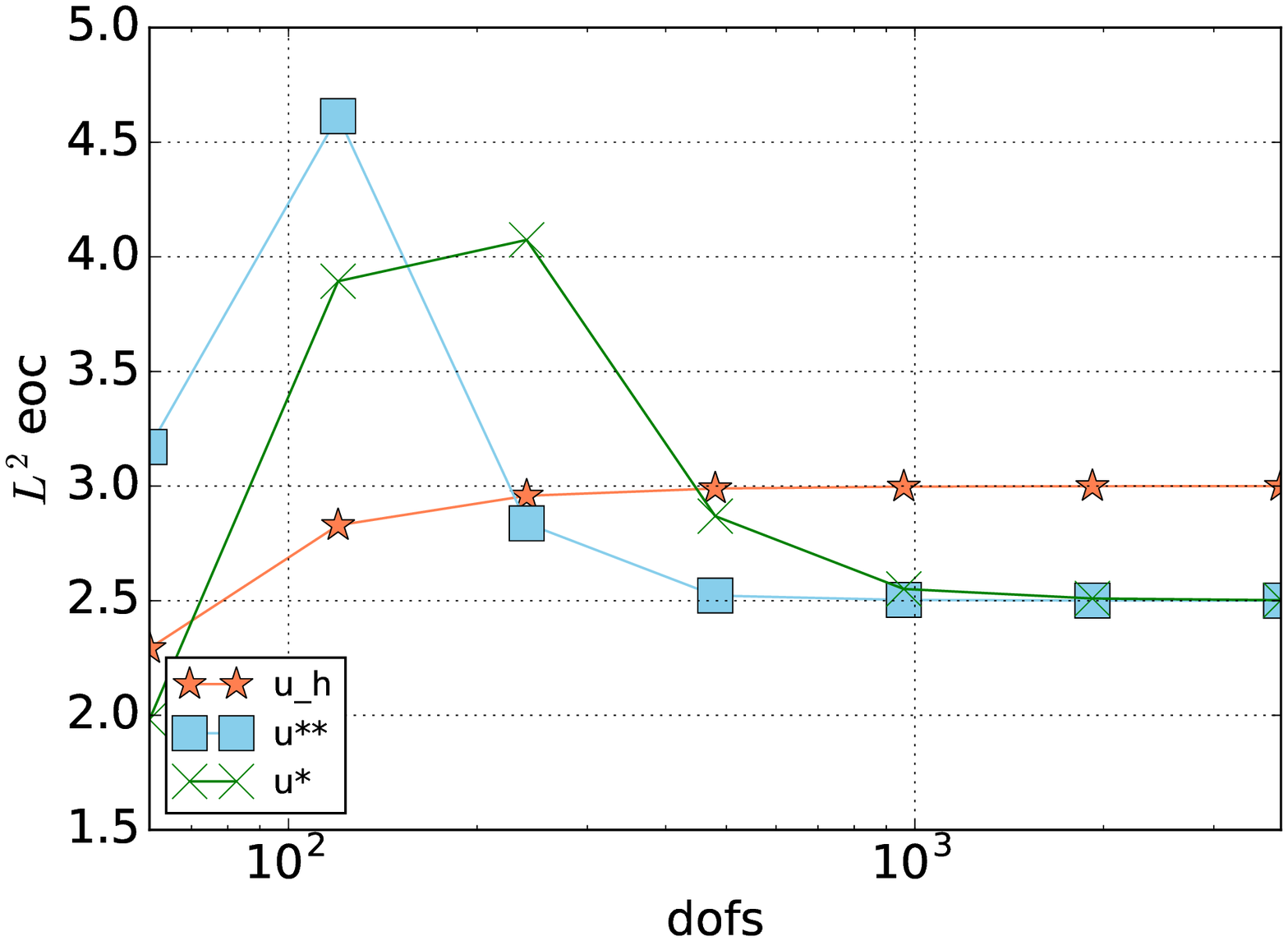}
\caption{Errors and convergence rates for $\sobh1$ (left two) and $\leb2$
  (right two) for polynomial degree $p=2$ using SIAC for solution with
  reduced smoothness}
\label{fig:p2q2errors}
\label{fig:p2q2eocs}
\end{figure}

\begin{figure}
\centering
  \includegraphics[width=0.45\textwidth]{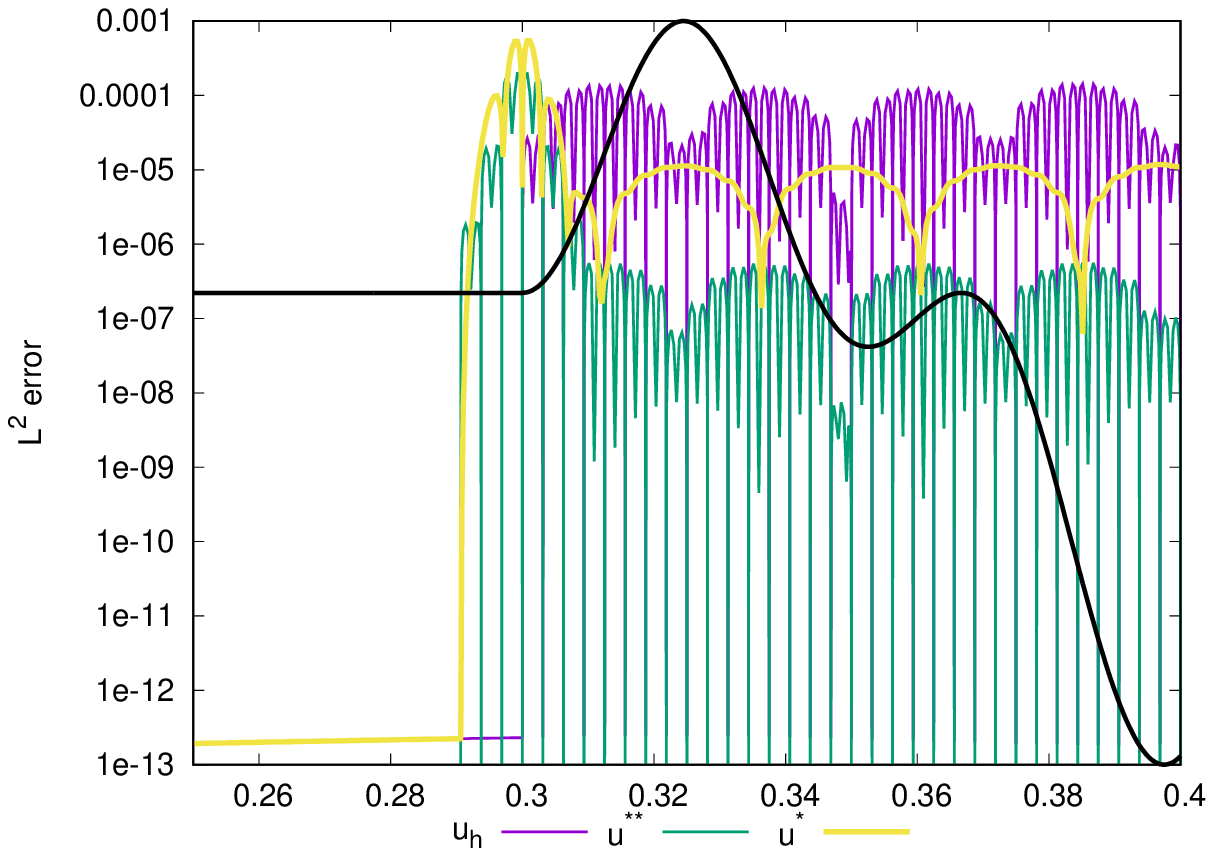}
  \includegraphics[width=0.45\textwidth]{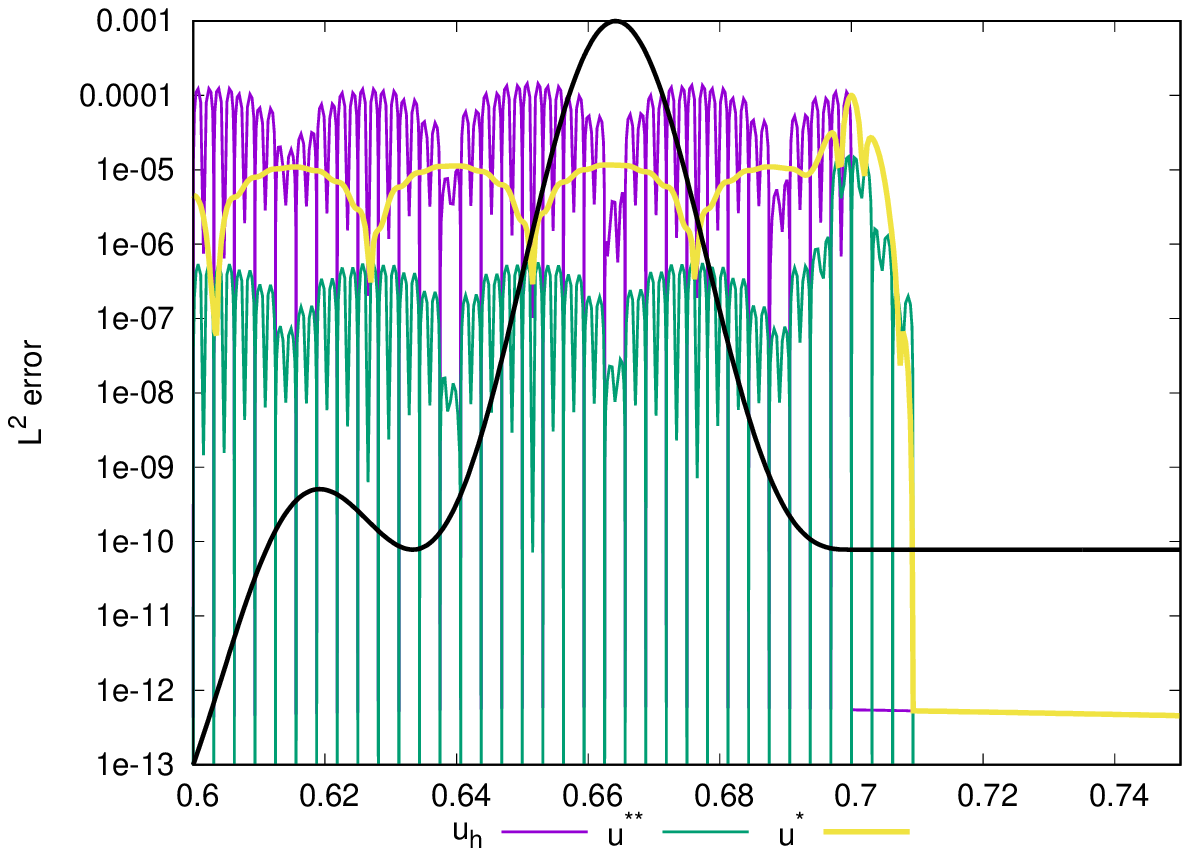}

\centering
  \includegraphics[width=0.45\textwidth]{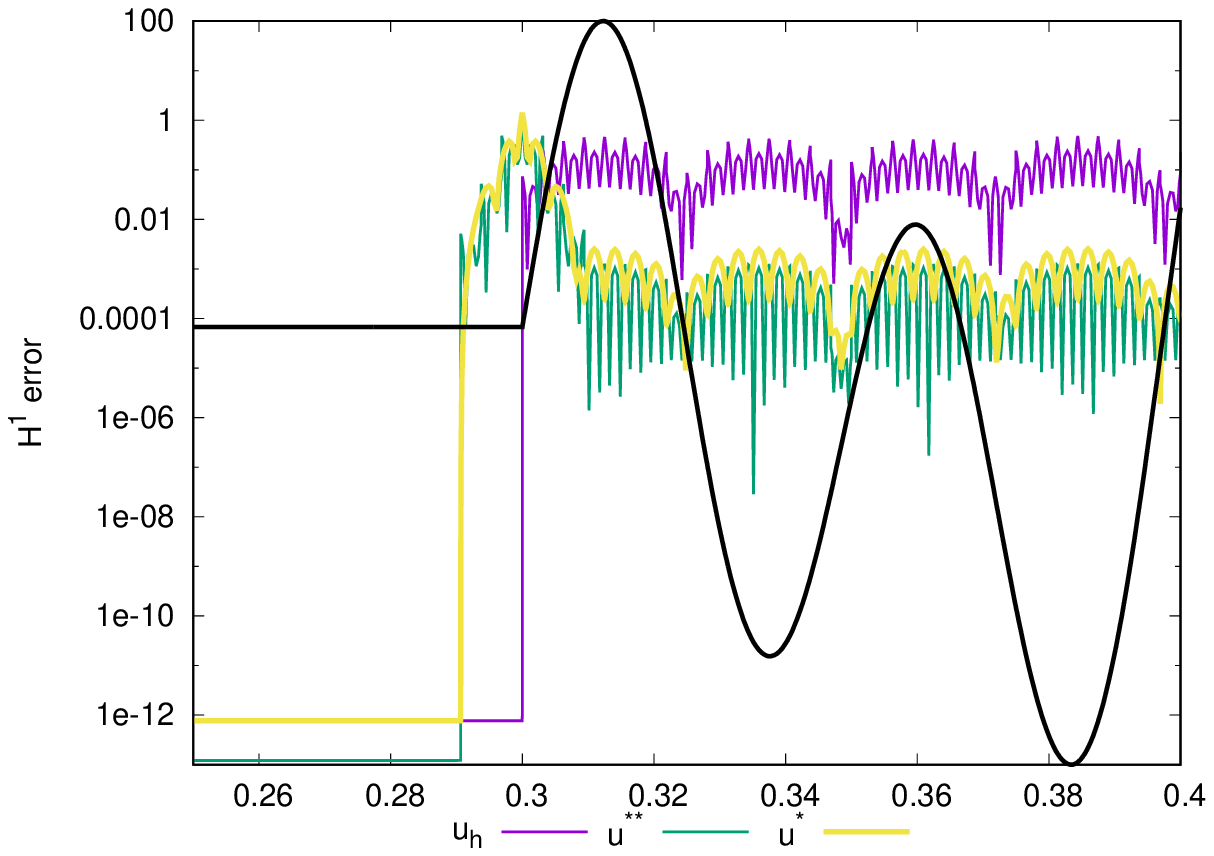}
  \includegraphics[width=0.45\textwidth]{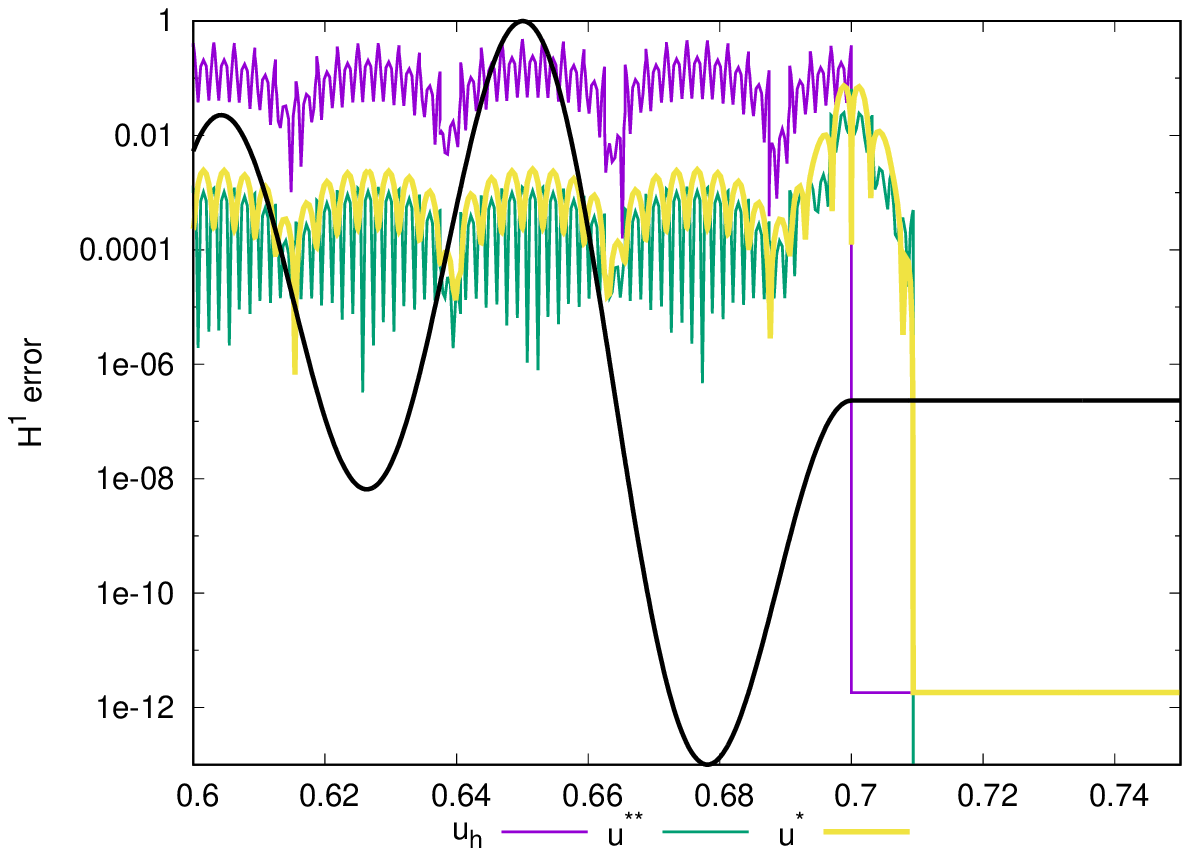}
\caption{Pointwise errors of the three different approximations 
  for the piecewise smooth problem. Top row shows the difference in the
  solution values around $x=0.3$ (left) and around $x=0.7$ (right). The
  bottom row shows gradient errors in the same two regions.
  These are results with $p=2$ and $h=1/320$.}
\label{fig:p2q2pointwise}
\end{figure}

\begin{figure}
  \includegraphics[width=0.45\textwidth]{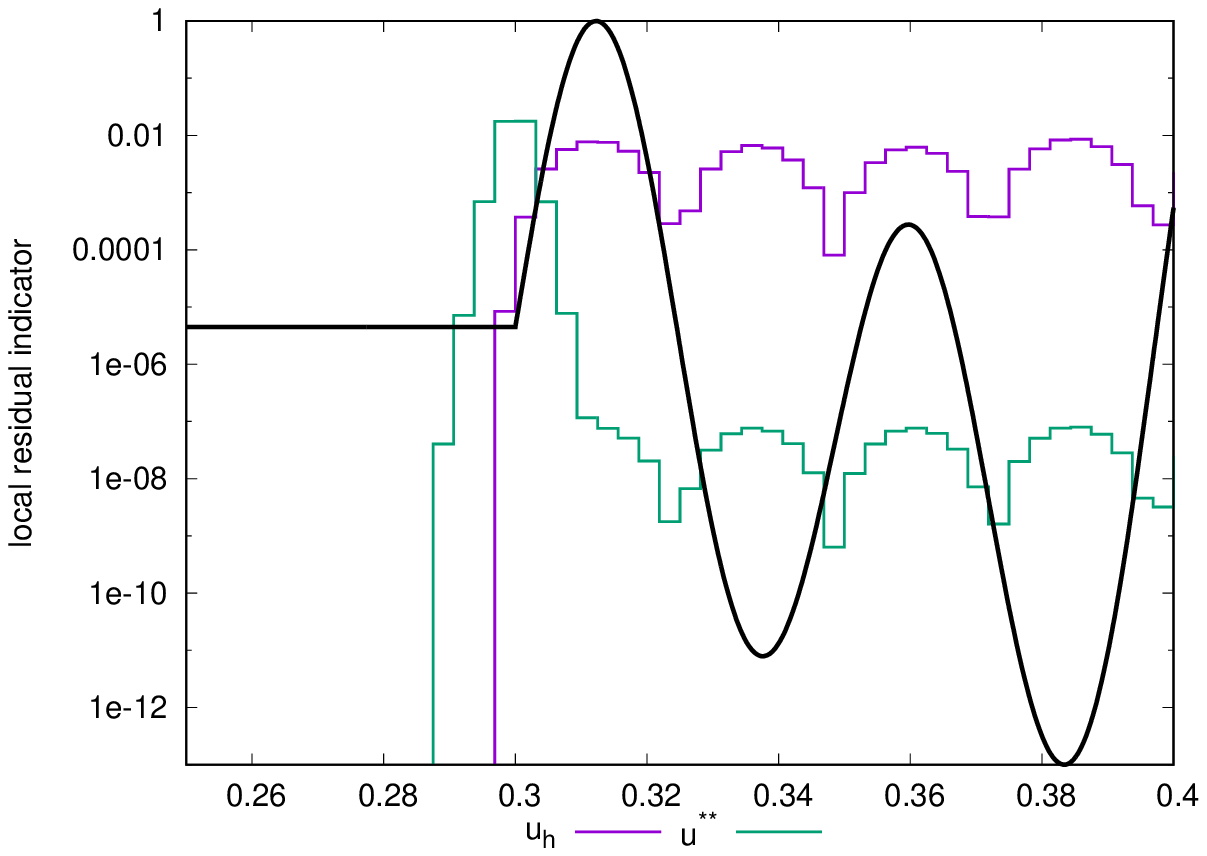}
  \includegraphics[width=0.45\textwidth]{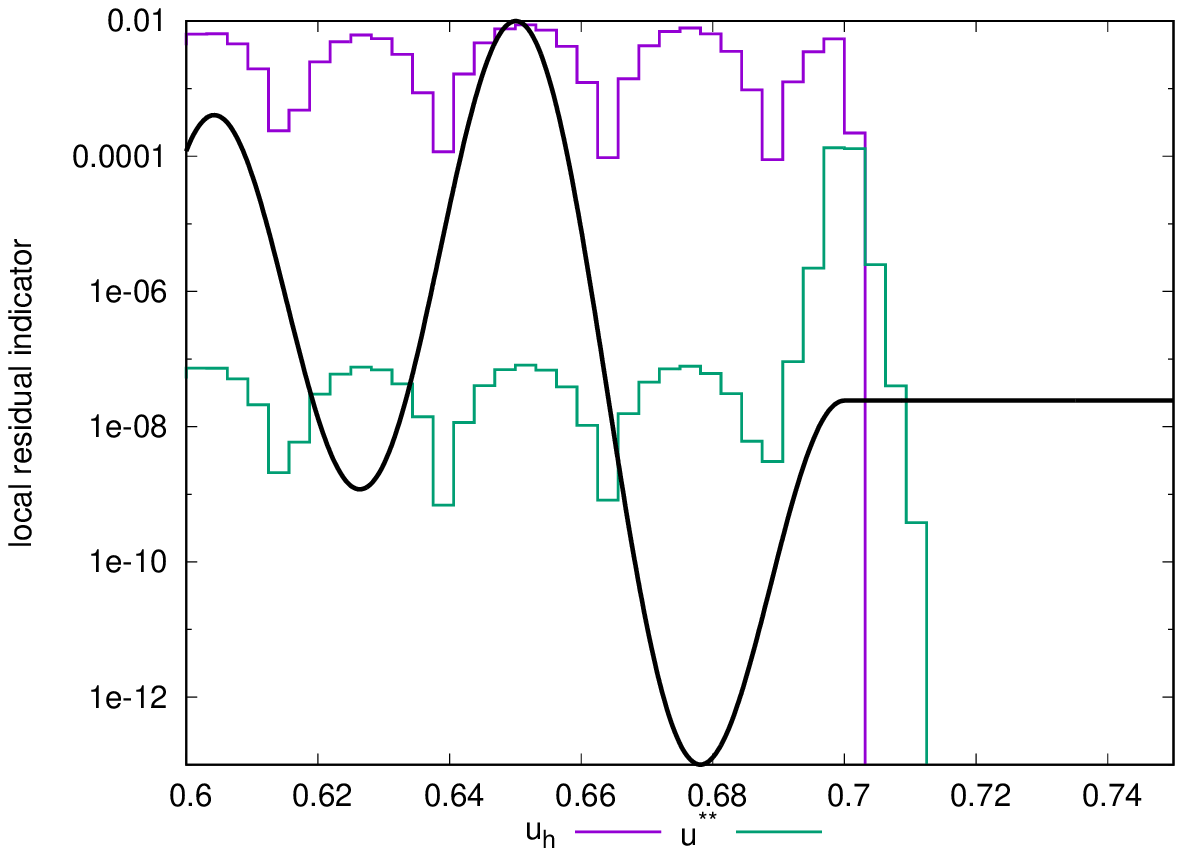}
  \caption{Elementwise residual indicators using $u_h$ and $\uhss$
  for the piecewise smooth problem around $x=0.3$ (left) and around $x=0.7$ (right).
  bottom row shows gradient errors in the same two regions.
  These are results with $p=2$ and $h=1/320$.}
\label{fig:p2q2pointwiseres}
\end{figure}

\subsection{Superconvergent Patch Recovery}
In the following we solve
\begin{equation}
-\div\qp{\D \nabla u} = f
\end{equation}
in a two dimensional domain $\Omega$ where the forcing function $f$ is chosen by
prescribing an exact solution $u$. This function is also used to prescribe
Dirichlet boundary conditions on all of $\partial\Omega$. In the first
example we chose a smooth exact solution $u$ with a scalar diffusion
coefficient $\D=I_2\qp{|x|^2+\frac{1}{2}}$, while for the second test we
use a solution with a corner singularity and $\D = I_2$.

{In the following we show results using a DG scheme on a triangular grid.
The grid is refined by splitting each element into four elements. In the final examples with local adaptivity, this leads to a grid with hanging nodes. We also carried out experiments using a continuous ansatz space with very simular results.} 

Note that in all figures depicting errors and EOCs, the $x$-axis shows the number of degrees of freedom for
$u_h$. While the other approximations have a larger number of degrees of
freedom, the global problem that has to be solved, i.e. solving the linear
system for $u_h$ and for $R\uhs$, scales with the number of degrees of
freedom for $u_h$ and thus this seems a reasonable indication of the
computational complexity.

For our first test we choose $u(x,y) = \sin{\pi x/(0.25+xy)}\sin{\pi
(x+y)}$, and $\Omega=(0,1)^2$. We start with an initial grid which is
slightly irregular as shown in Figure~\ref{fig:macrogrid2d}. This is
to avoid any superconvergence effects due to a structured layout of the
triangles.
\begin{figure}
\centering
  \includegraphics[width=0.47\textwidth]{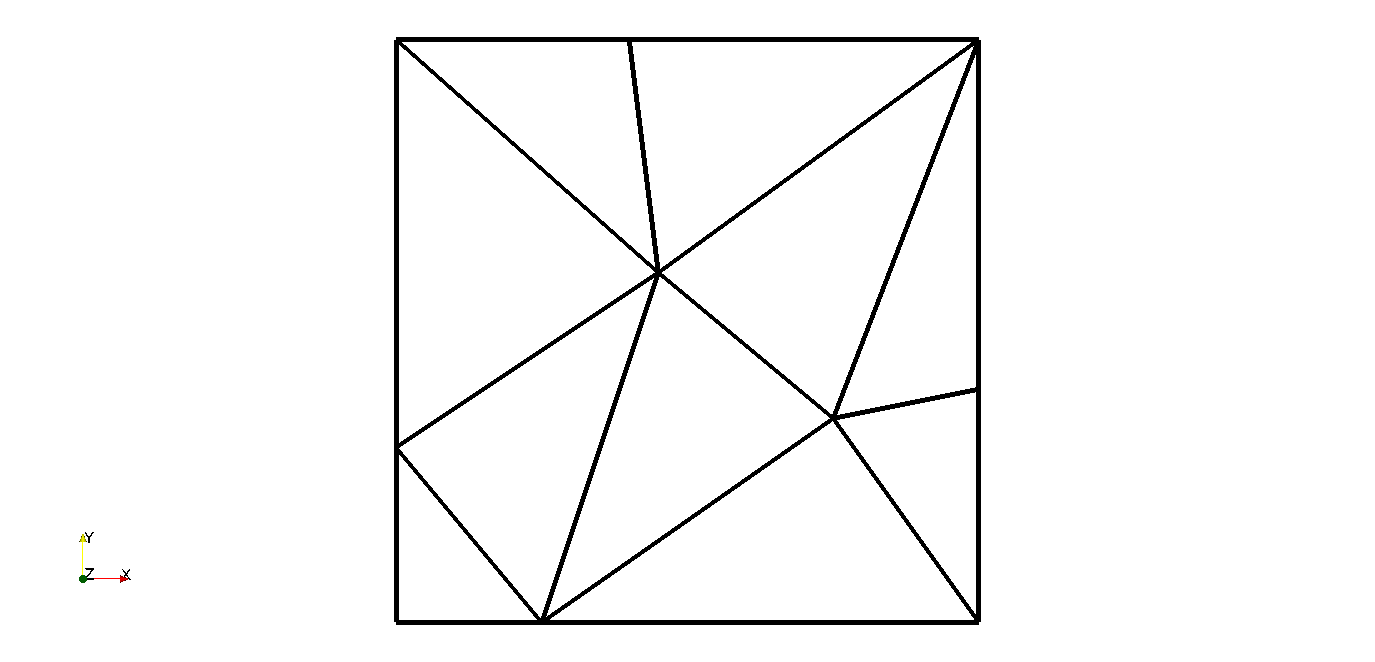}
  \includegraphics[width=0.47\textwidth]{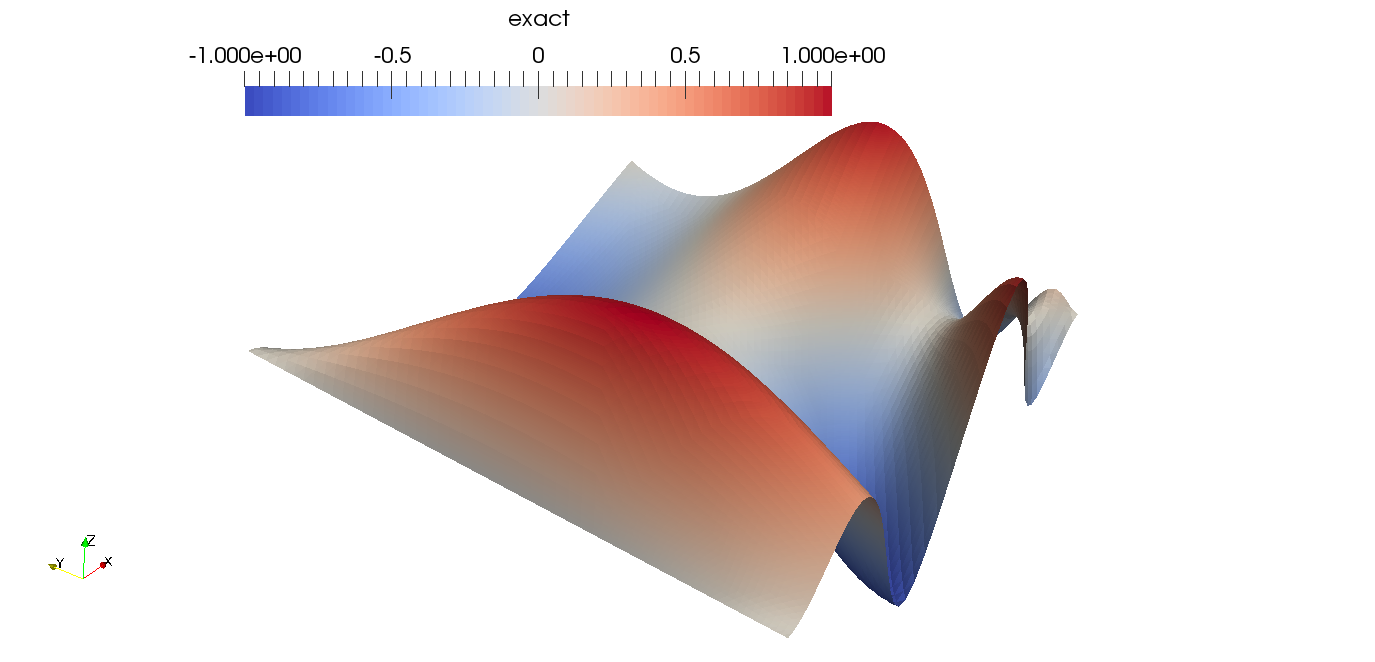}
  \caption{Macro grid and exact solution $u$ for smooth problem. Note that
  the solution has been scaled down by a factor of $4$.
    \label{fig:macrogrid2d}
}
\end{figure}

Figure~\ref{fig:global2dsmooth} shows $\leb2$ and $\sobh1$ errors and EOCs for the
three approximations $u_h,\uhs,\uhss$ with polynomial degrees $p=1,2,3$.
It can be seen that, in general, the postprocessor $\uhs$ improves the EOC
by an order of $1$ in the $\sobh1$ norm and that the EOC of the improved postprocessor
$\uhss$ is at least as good. While the actual error of $\uhs$ can be larger on
coarser grids  than the error computed with $u_h$, the error using
$\uhss$ is significantly better in all cases. 
{Focusing now on the $\leb2$ norm, we see that when computing the error using $\uhss$, the EOC is one order better then the convergence rate in the $\sobh1$ norm, as expected. 
For $p=2,3$, this is also true when using $\uhs$, while for
$p=1$ the EOC is only $2$ in this case, and an increase to $3$ is only achieved with the improved postprocessor $\uhss$.}
The same observation can be made when using the SIAC postprocessor in the previous section.
\begin{figure}
\centering
  \includegraphics[width=0.32\textwidth]{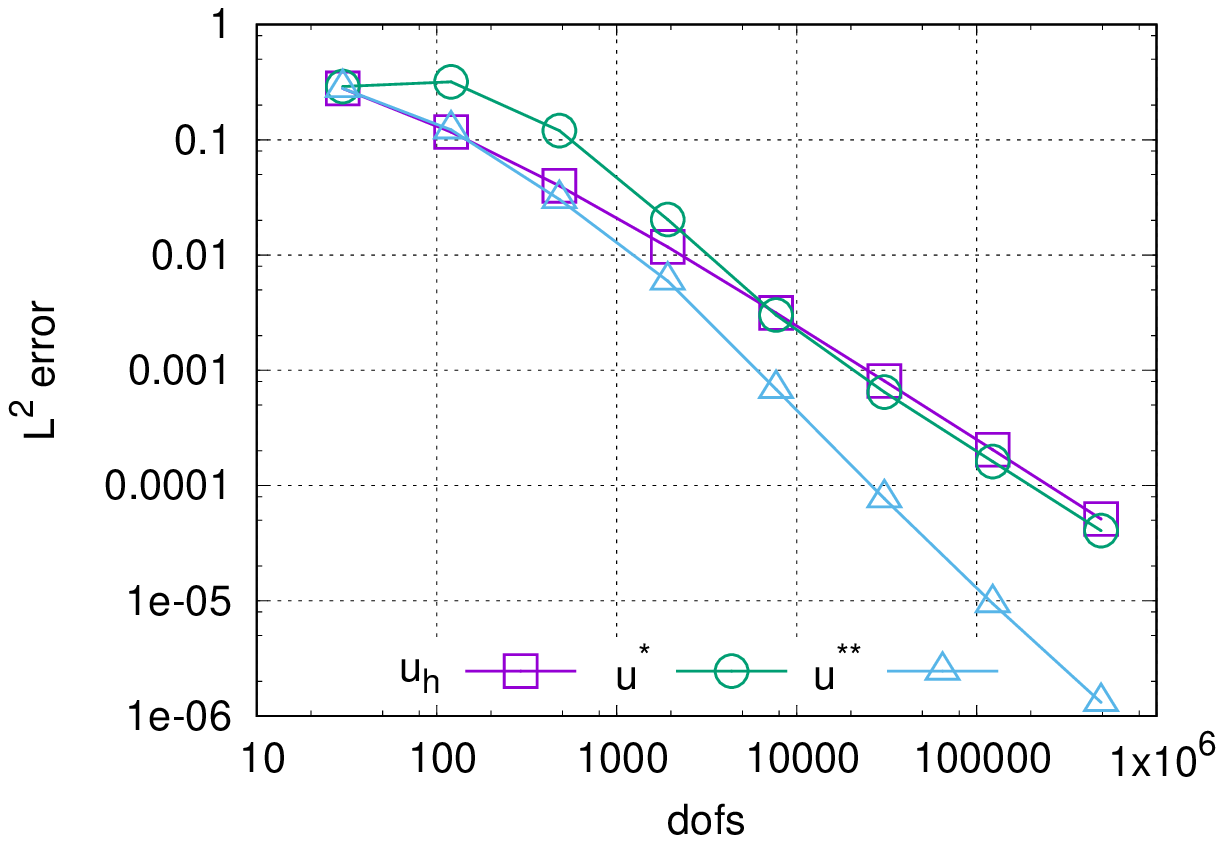}
  \includegraphics[width=0.32\textwidth]{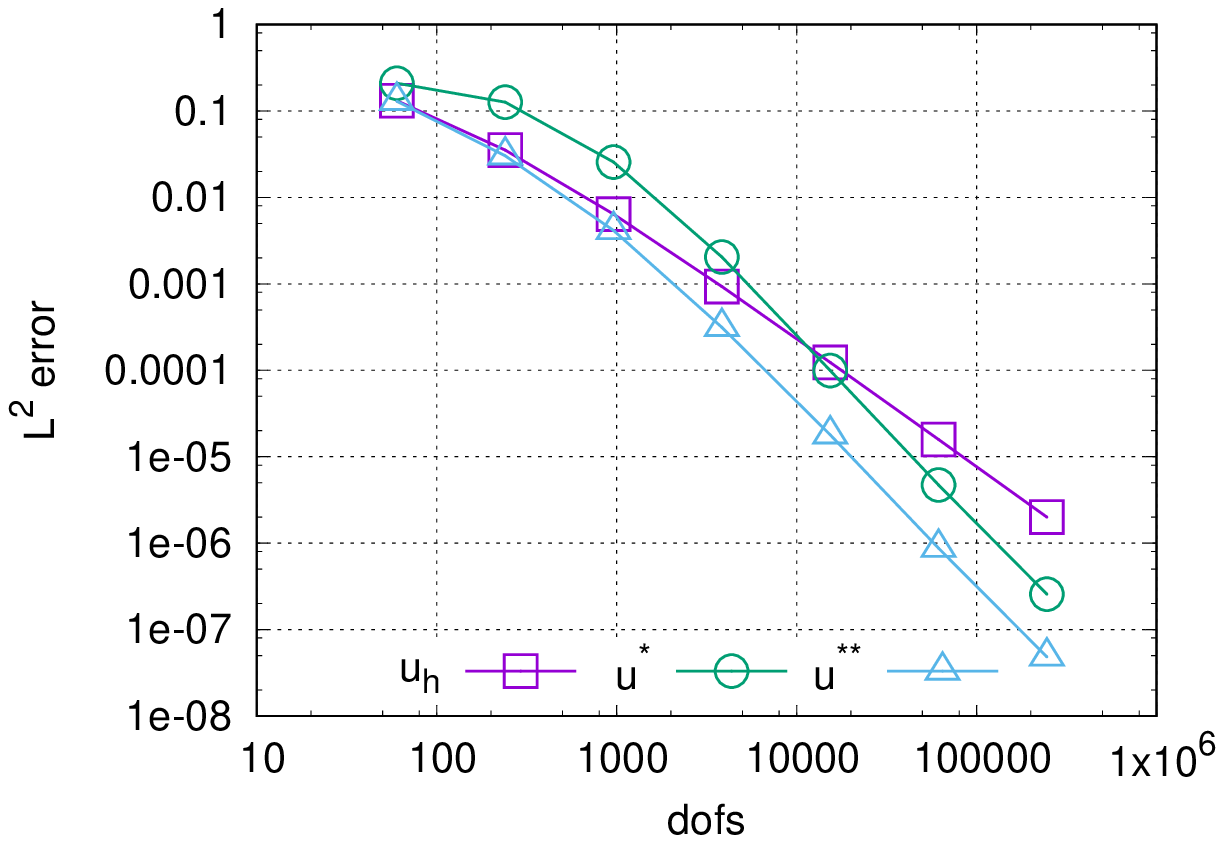}
  \includegraphics[width=0.32\textwidth]{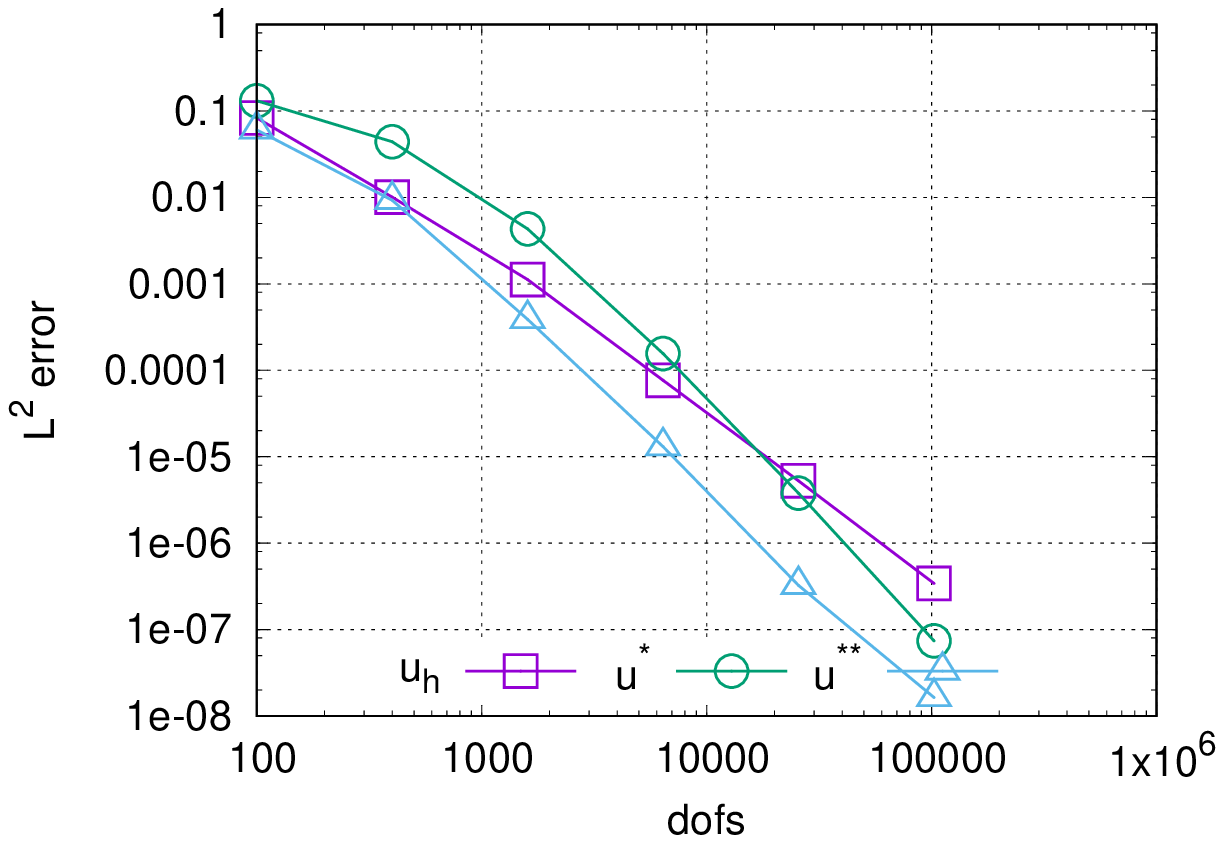} \\
  \includegraphics[width=0.32\textwidth]{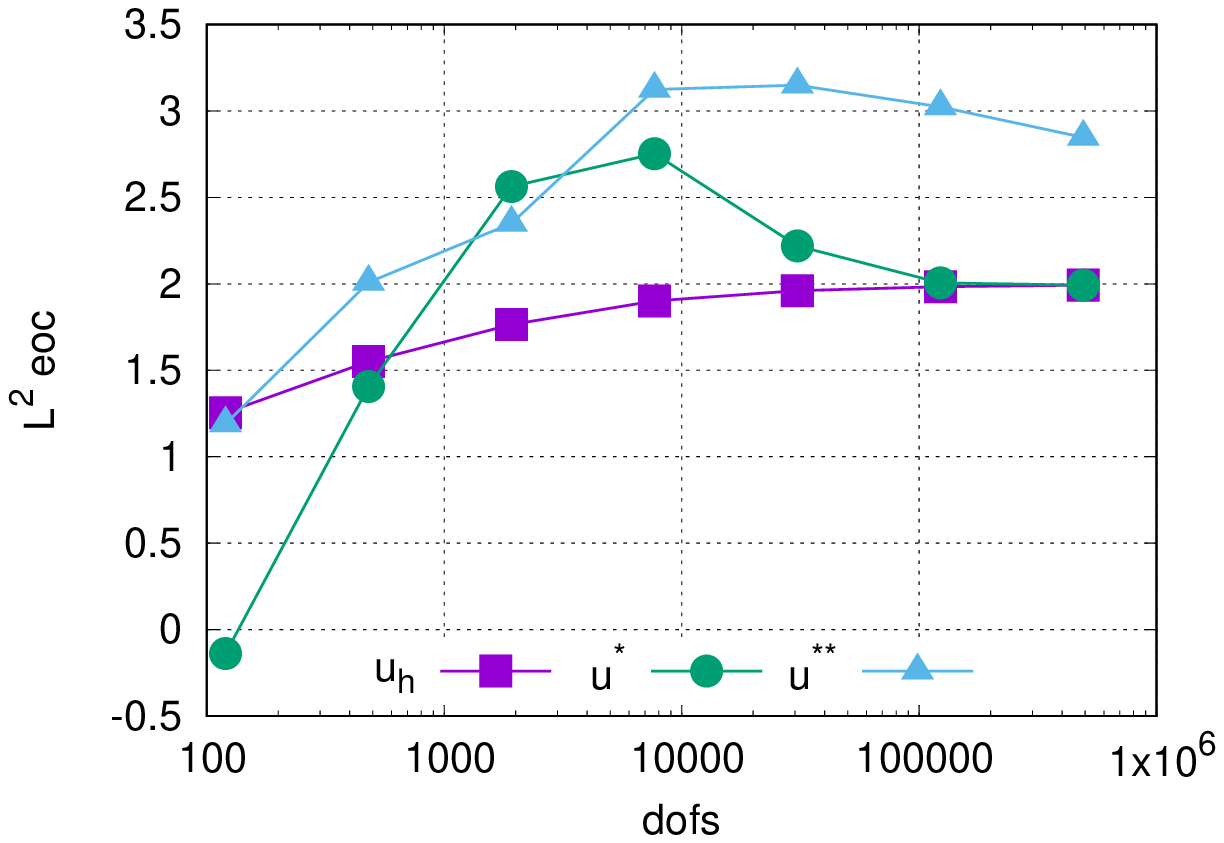}
  \includegraphics[width=0.32\textwidth]{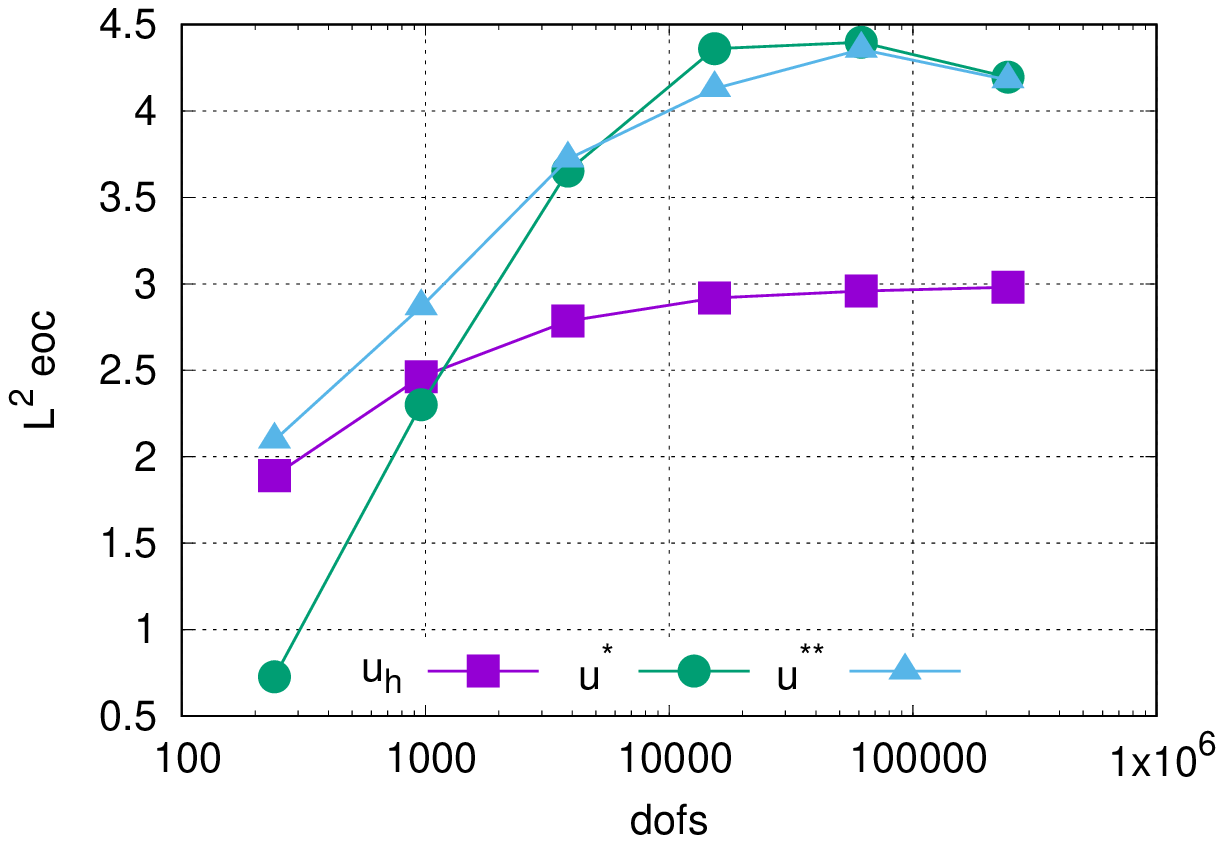}
  \includegraphics[width=0.32\textwidth]{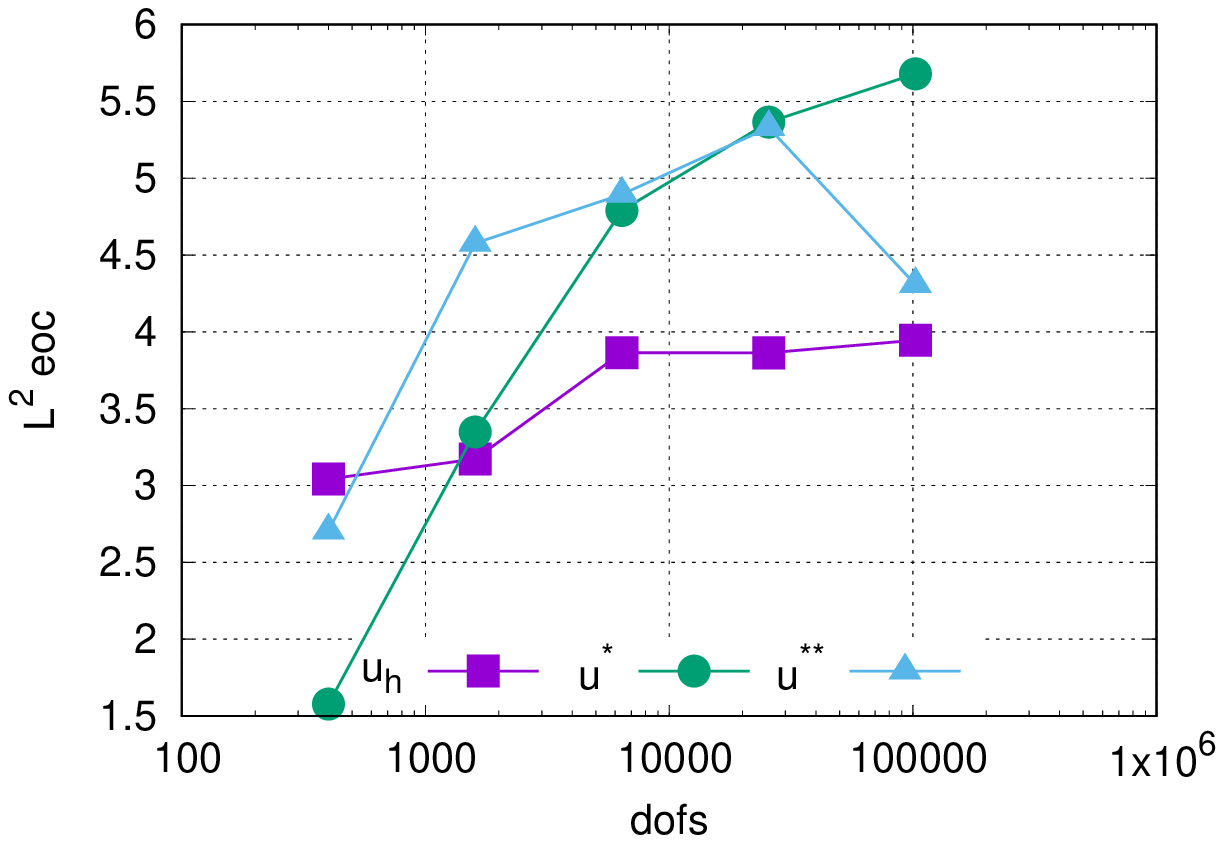} \\
  \includegraphics[width=0.32\textwidth]{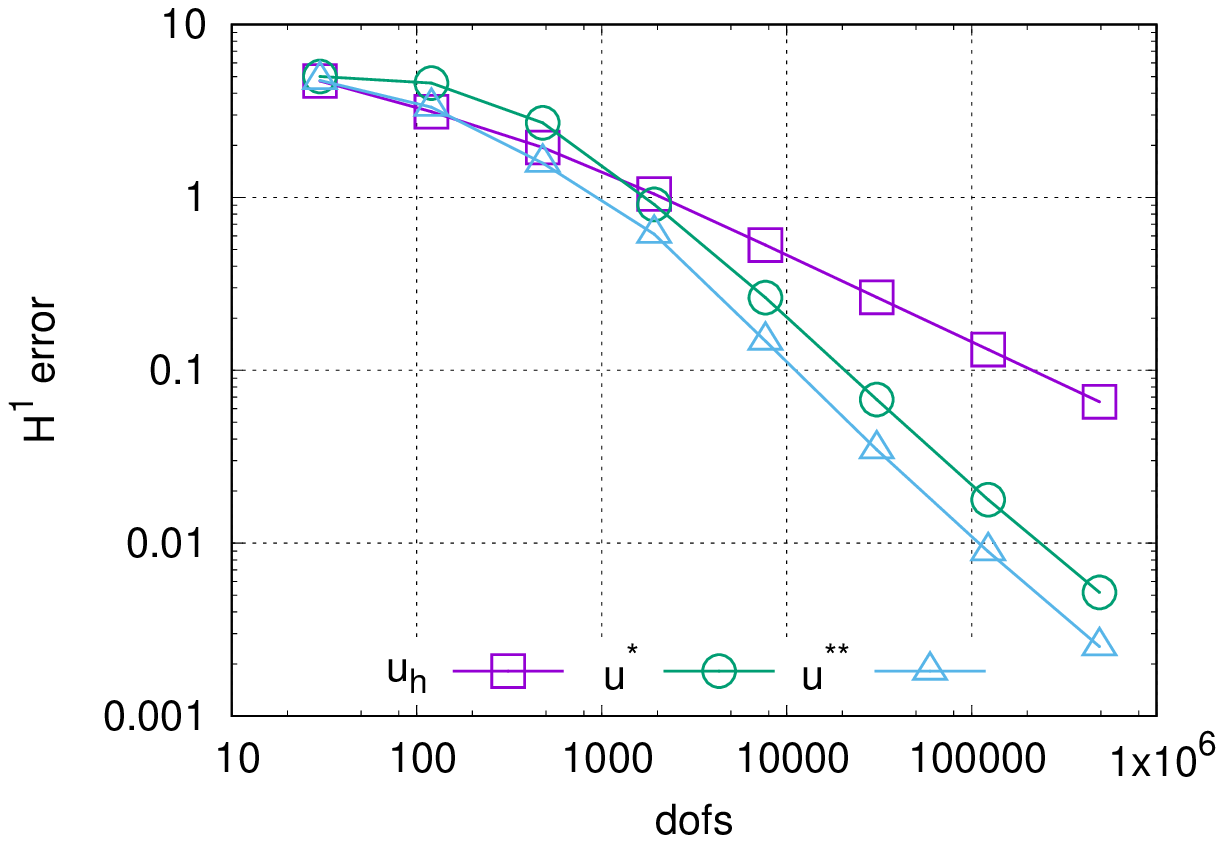}
  \includegraphics[width=0.32\textwidth]{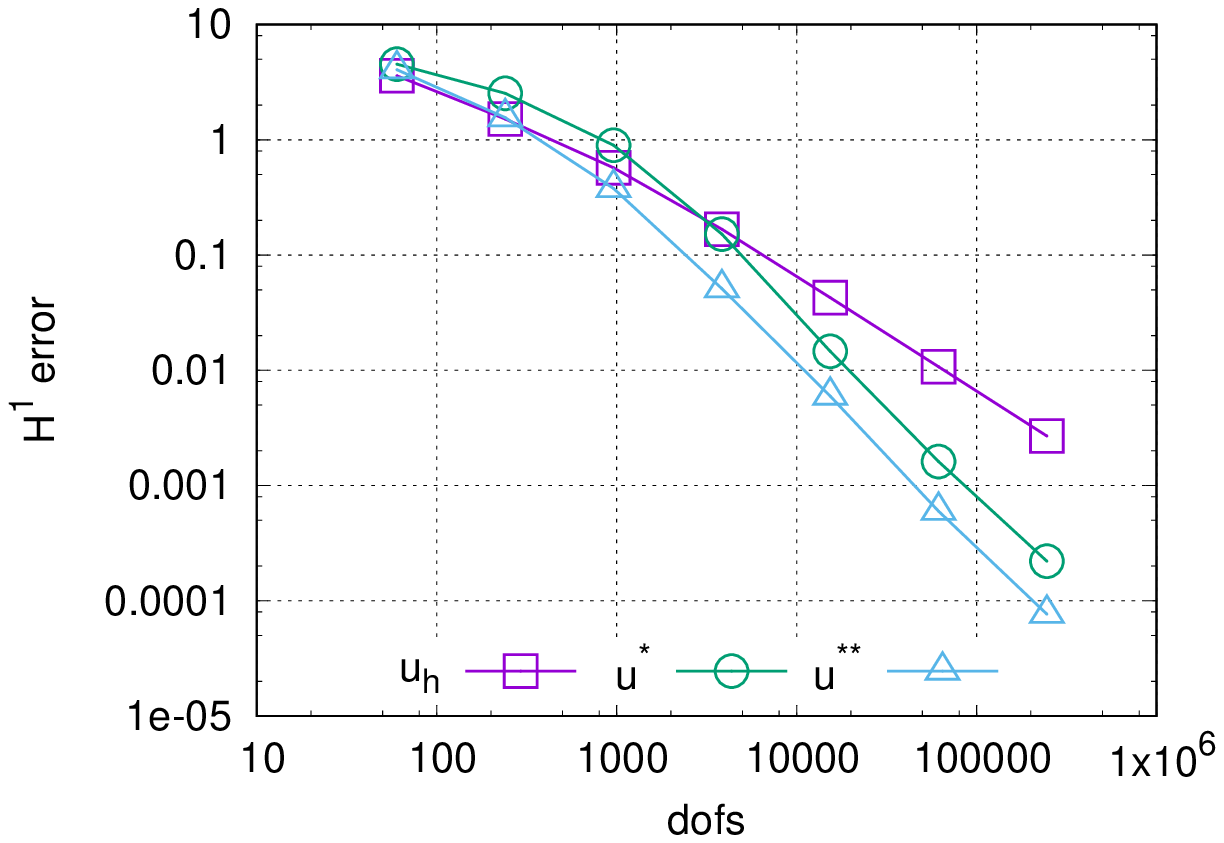}
  \includegraphics[width=0.32\textwidth]{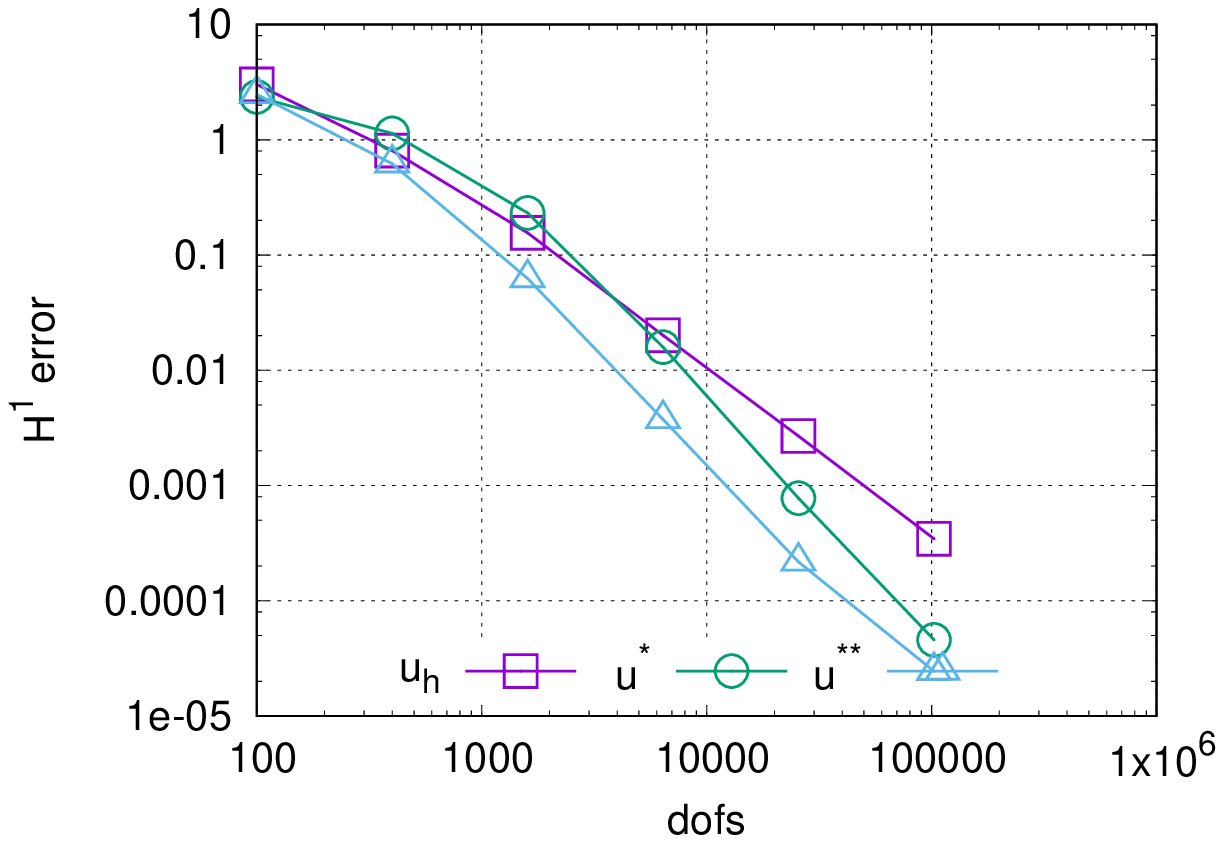} \\
  \includegraphics[width=0.32\textwidth]{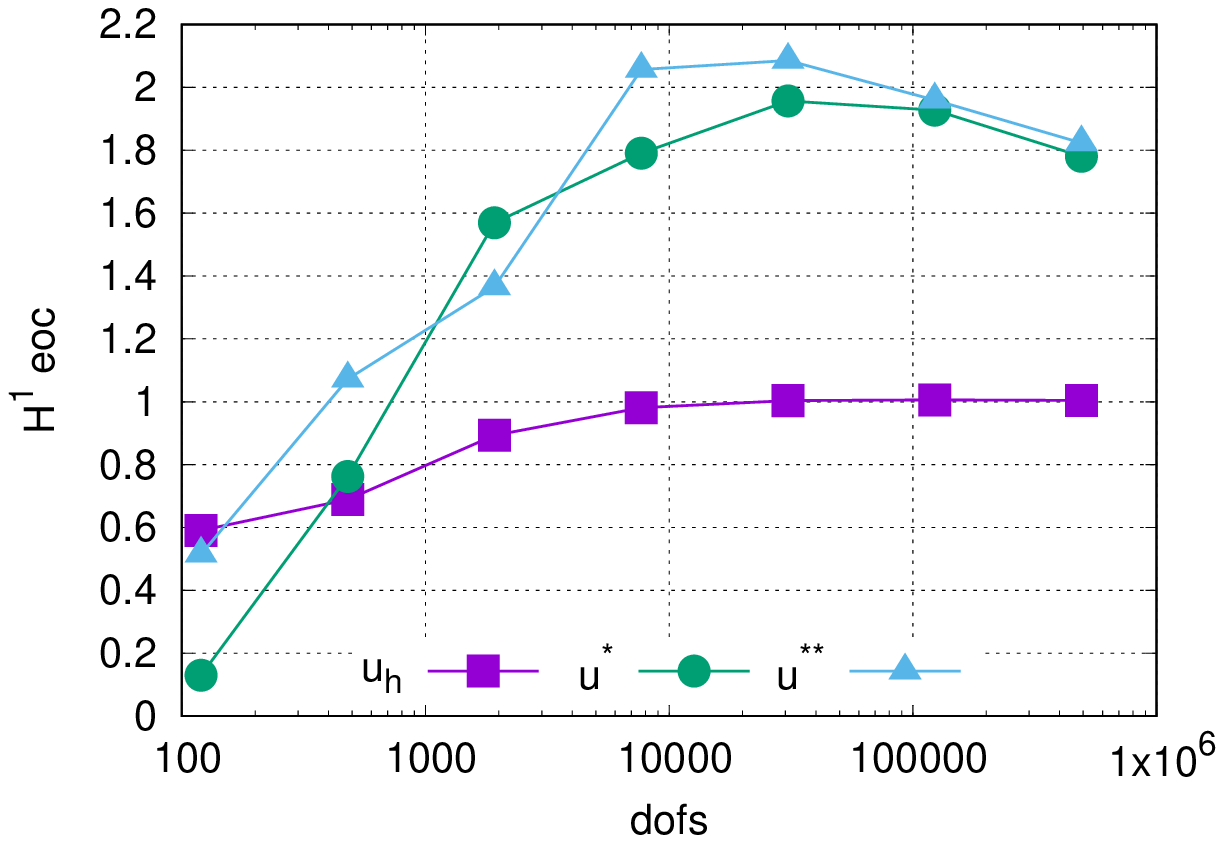}
  \includegraphics[width=0.32\textwidth]{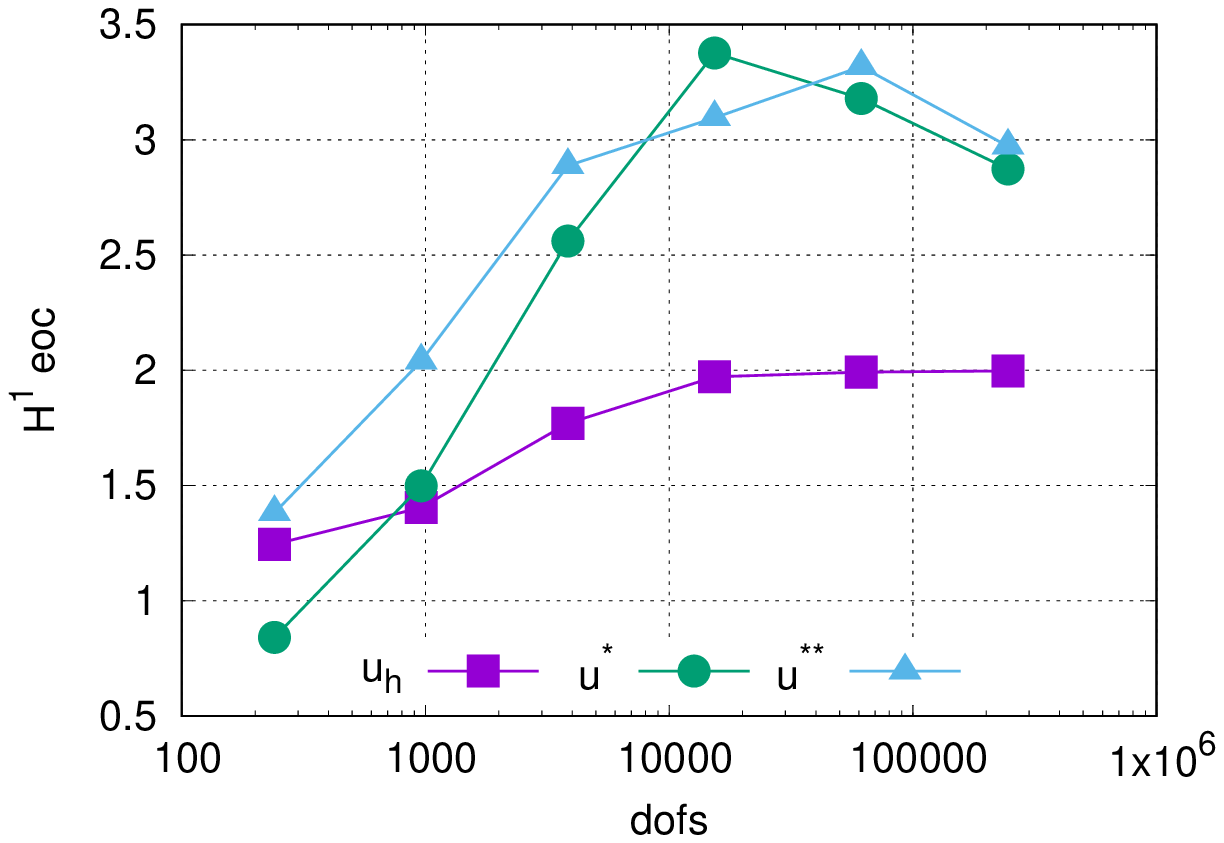}
  \includegraphics[width=0.32\textwidth]{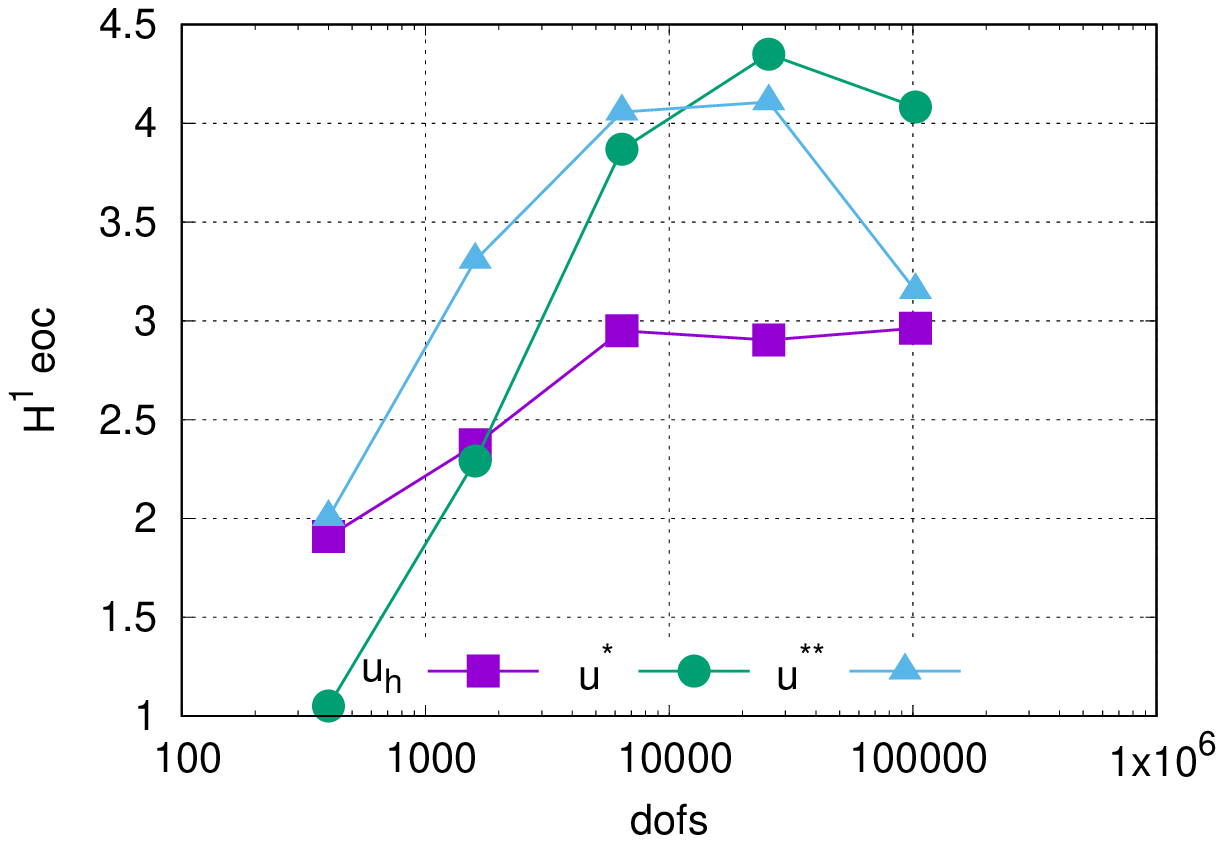}
  \caption{$L^2$ errors and EOC (top two rows)
  and $H^1$ errors and EOC (bottom two rows) for the smooth problem with $p=1,2,3$ (left to right)
  \label{fig:global2dsmooth}.}
\end{figure}

Using the same problem setting, we investigate the performance of an
adaptive algorithm in Figure~\ref{fig:adapt2dsmooth}. We use
a \emph{modified equal distribution} strategy where elements are
marked for refinement when the local indicator $\eta_K$ exceeds
$\frac{\sum\eta_K}{\text{\#elements}}$. We compute the local indicator
on either $u_h$ or on the improved reconstruction $\uhss$.  The
advantage of basing the marking strategy on $\uhss$ is clearly
demonstrated. While marking with respect to $u_h$ and then using the
postprocessor only on the final solution (filled upward triangles)
leads to a significant reduction of the final error, the difference in
the convergence rate between $R_h$ and $R^{**}$ results in a finer grid
than necessary for a given tolerance. A reduction in the number of degrees
of freedom by a factor of $10$ to $100$ can be easily achieved by
using $R^{**}$.

\begin{figure}
\centering
  \includegraphics[width=0.32\textwidth]{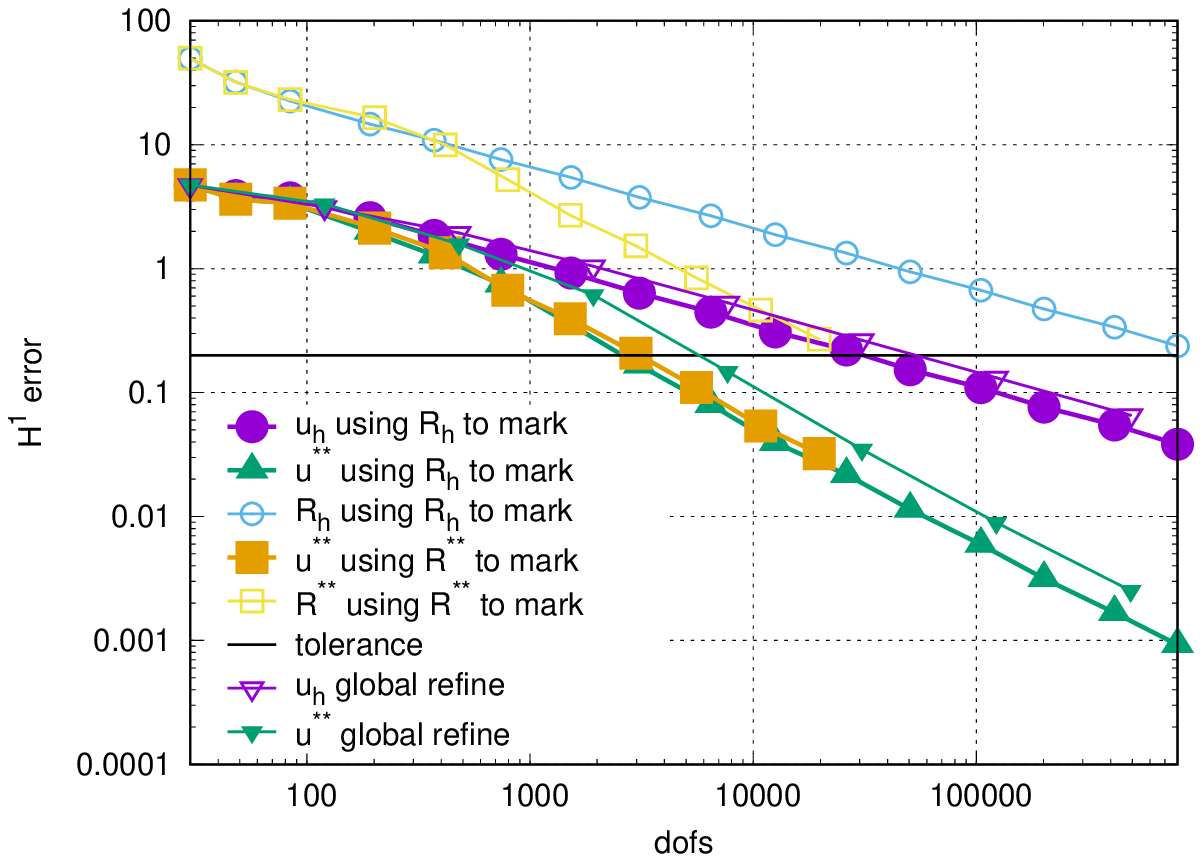}
  \includegraphics[width=0.32\textwidth]{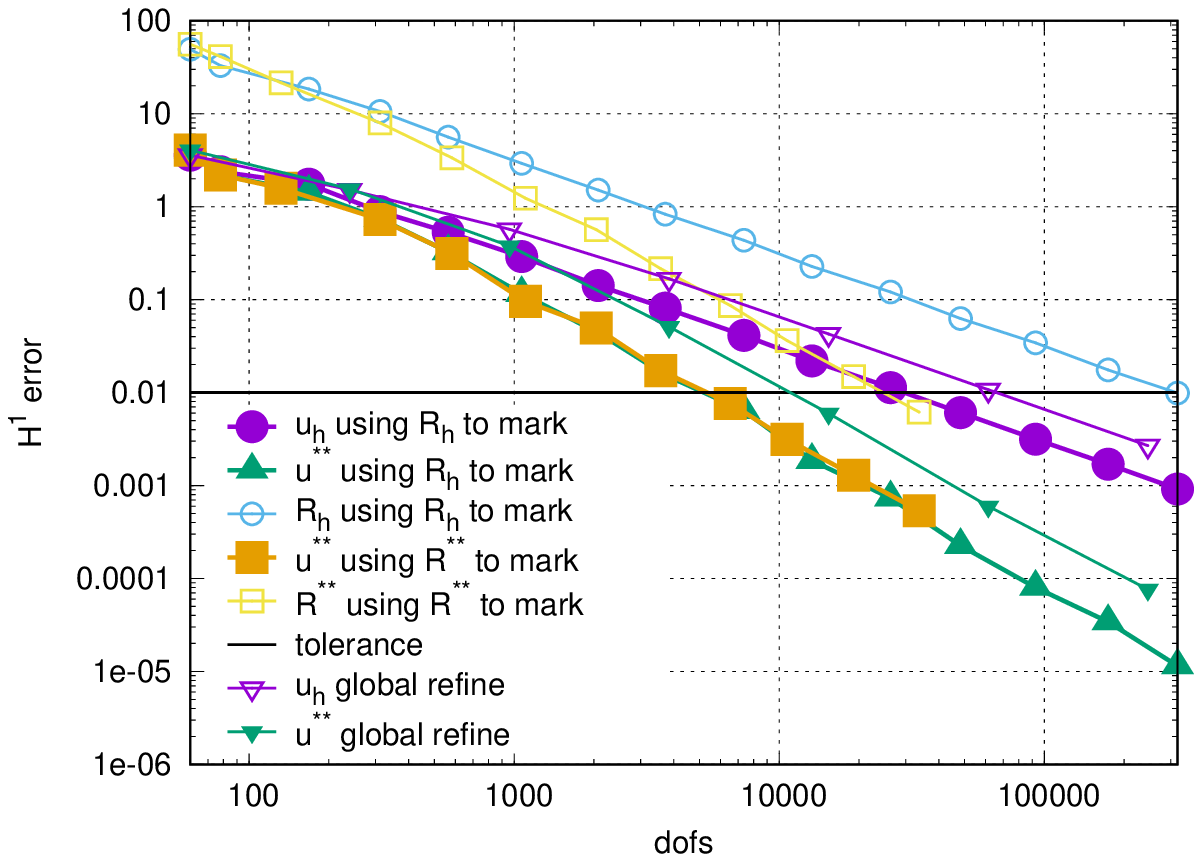}
  \includegraphics[width=0.32\textwidth]{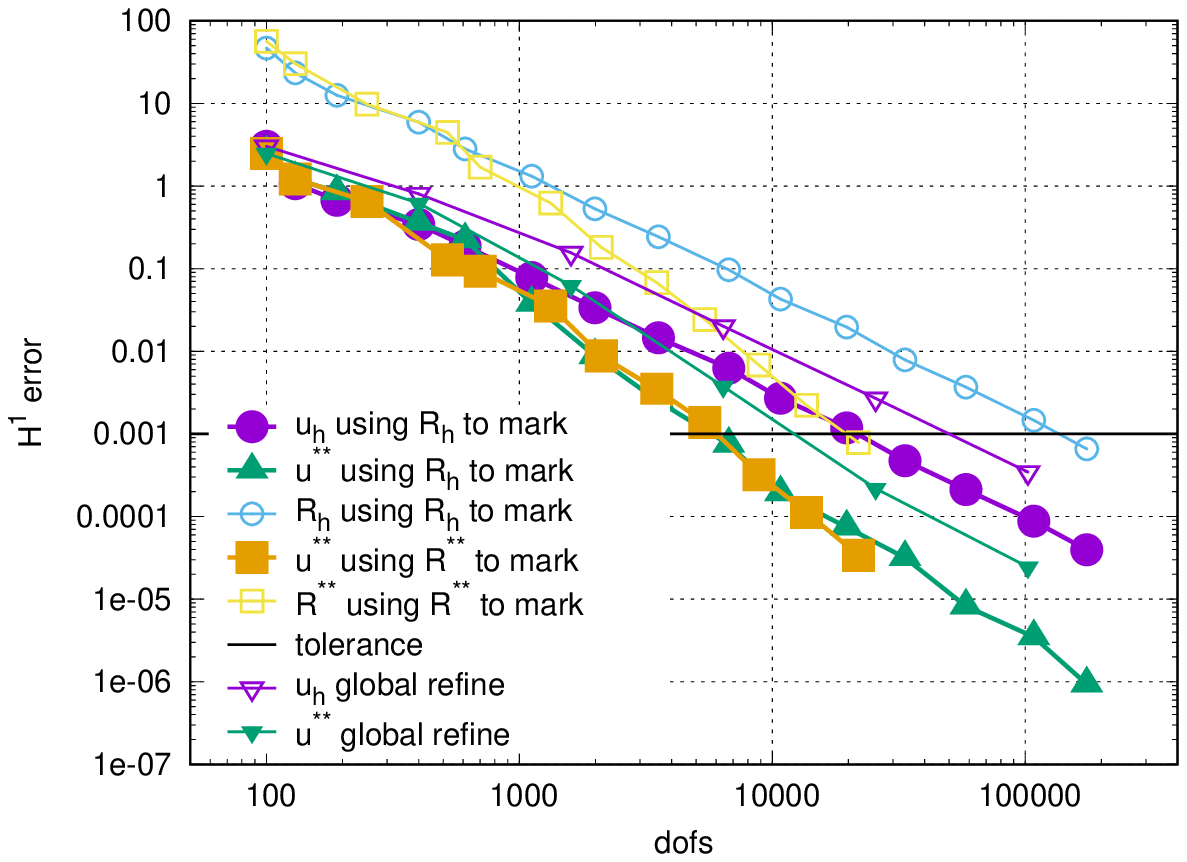}
  \caption{$H^1$ errors and residuals for
  smooth problem with $p=1,2,3$ (left to right).
  Results are shown for an adaptive mesh using a tolerance of
  $0.2,0.01,0.001$ for $p=1,2,3$, respectively.
  }
  \label{fig:adapt2dsmooth}
\end{figure}

For our final test we study a reentrant corner type problem, i.e.,
$\Omega=(-1,1)^2\setminus([0,1]\times[-1,0])$ using a regular triangulation.
First we choose the well known exact solution $u\in \sobh{\frac{3}{2}}$
leading to $f=0$. Since the solution is not even $\sobh2$ we can not expect
the postprocessed solution to have an increased convergence rate.  This 
is confirmed by our numerical tests summarized in
Figure~\ref{fig:global2dcorner}. Due to the reduced smoothness and the
simplicity of the solution away from the corner, the postprocessing does not
only not improve the EOC but can even lead to a slight increase in the
overall error clearly noticeable in the $\sobh1$ error for the $p=2$ case.
This is even more obvious when the postprocessor, $\uhs$, is used directly.
Alternatively, going
from $\uhs$ to $\uhss$ leads to an approximation which is very close to
the original $u_h$ in all cases.
Although the results for the globally refined grid are not that promising,
the postprocessing nevertheless has considerable benefits when adapting the
grid using the residual indicator based on $\uhss$.
Indeed Figure \ref{fig:adapt2dcorner} shows that, for a given number of dofs, mesh adaptation based on $R^{**}$  produces an approximation
$\uhss$ which has a much smaller error  than $u_h$ (on a mesh constructed using $R_h$).

\begin{figure}
\centering
  \includegraphics[width=0.45\textwidth]{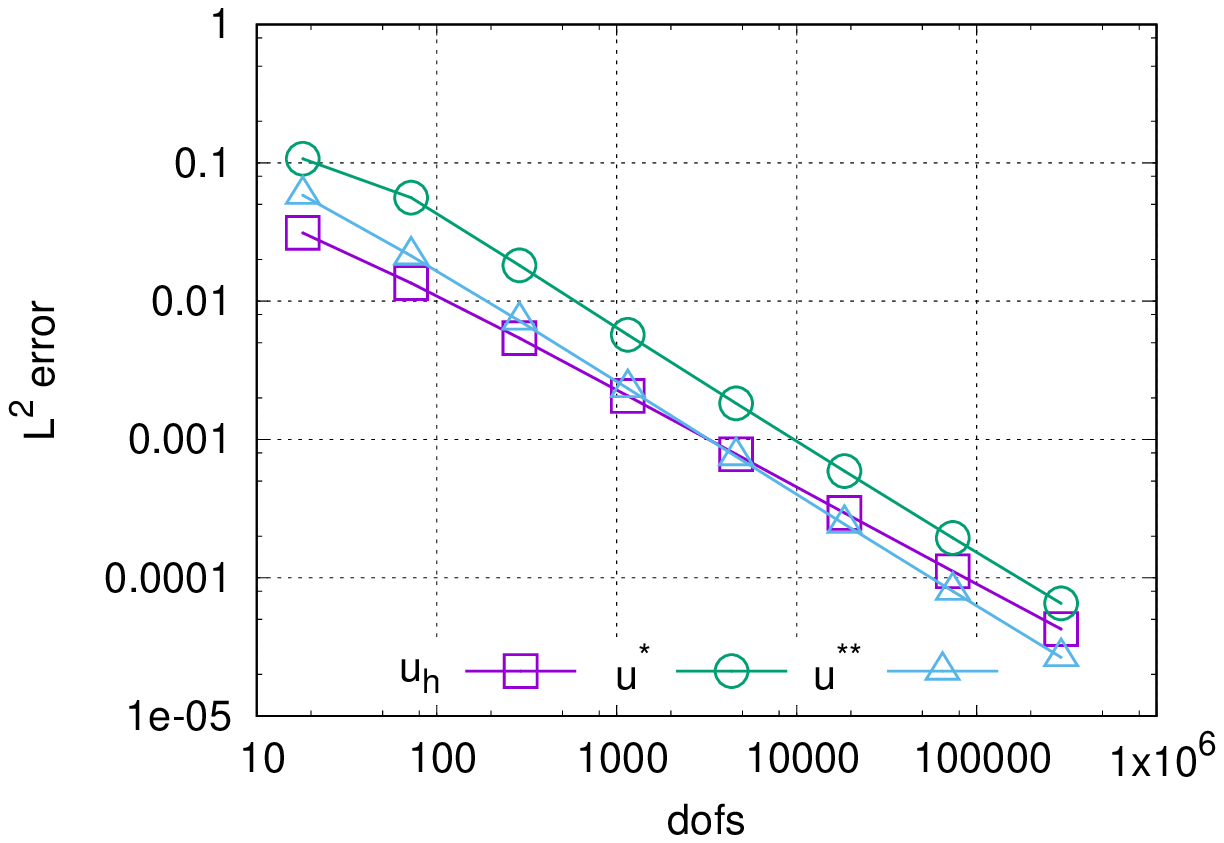}
  \includegraphics[width=0.45\textwidth]{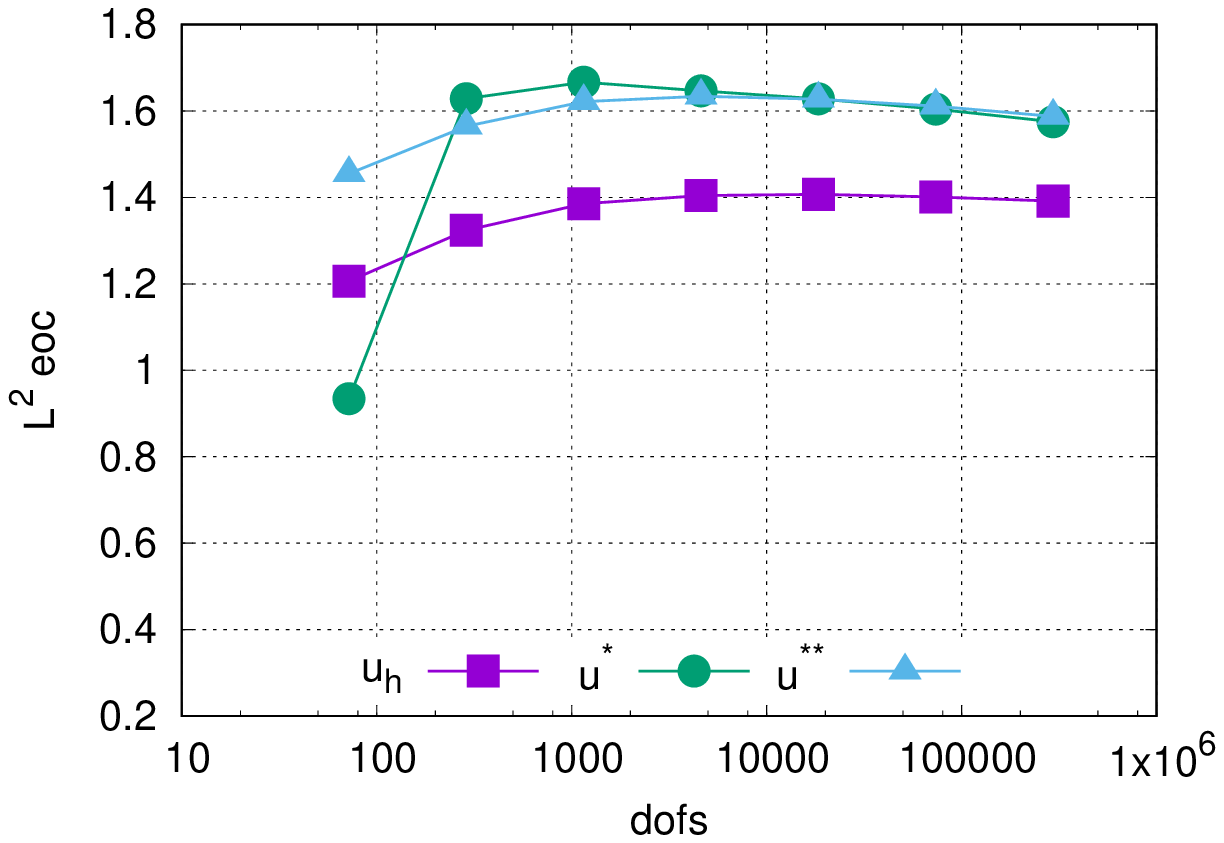}
  \\
  \includegraphics[width=0.45\textwidth]{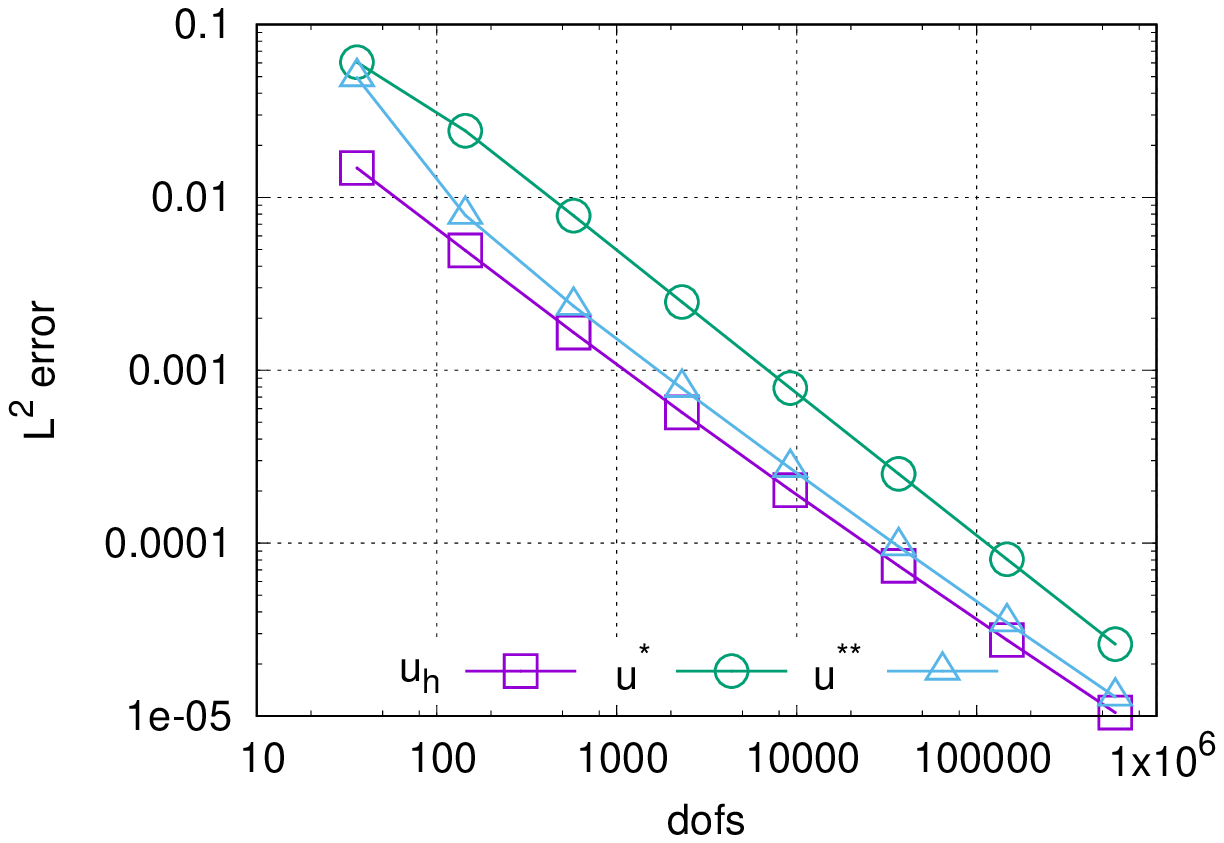}
  \includegraphics[width=0.45\textwidth]{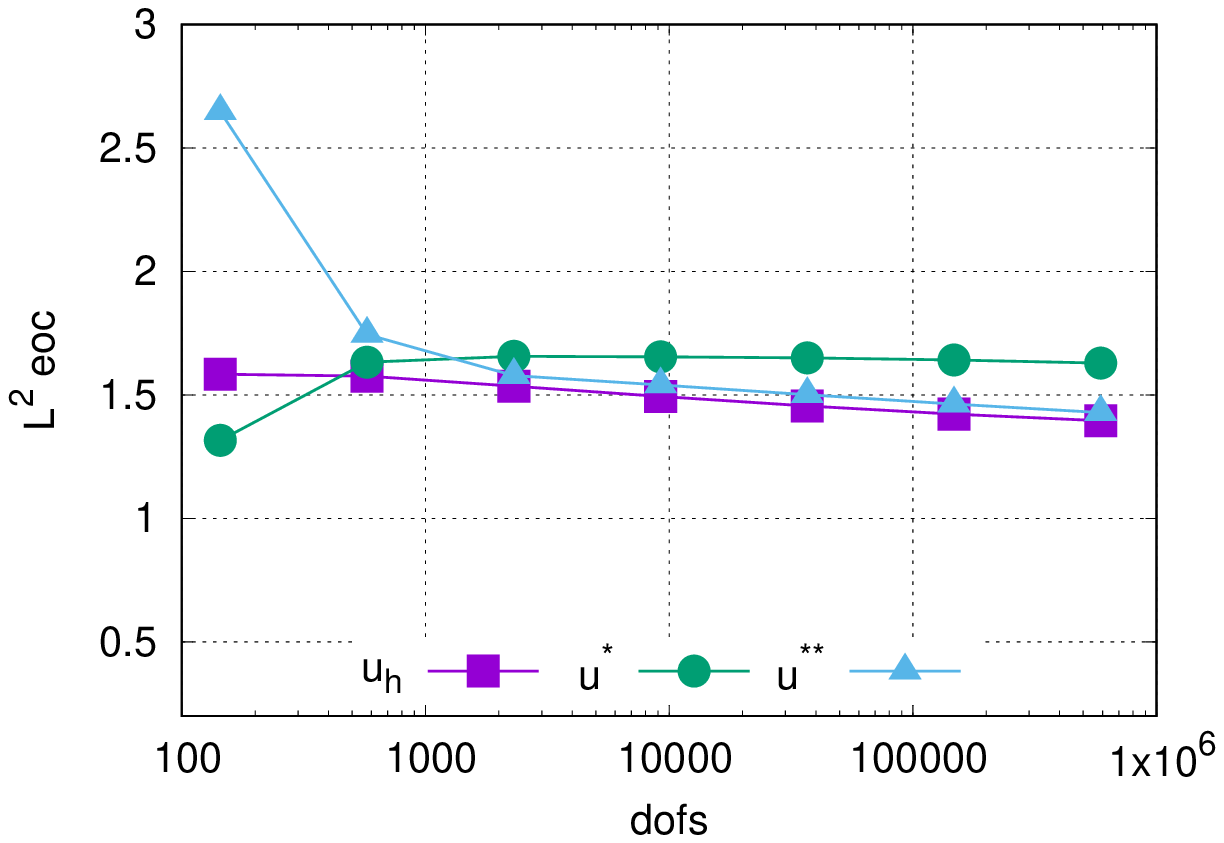}
  \\
  \includegraphics[width=0.45\textwidth]{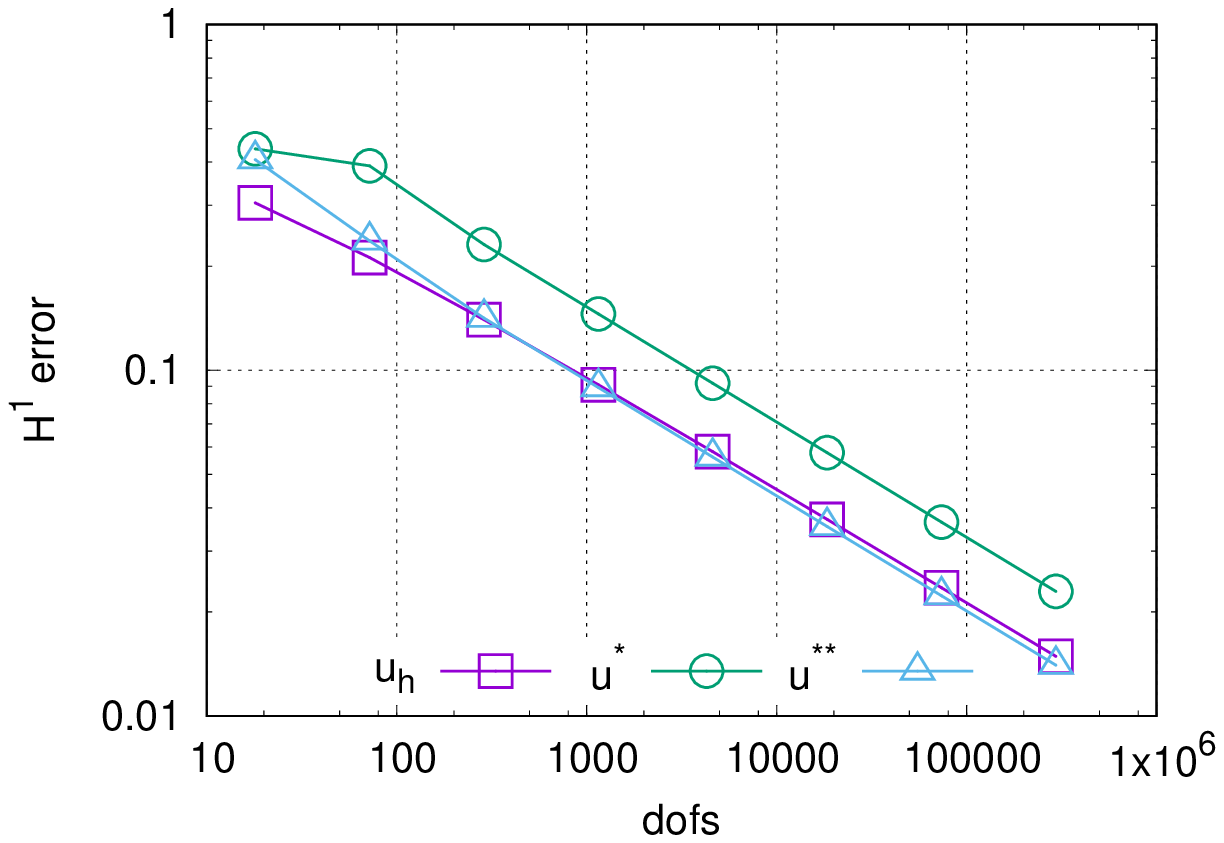}
  \includegraphics[width=0.45\textwidth]{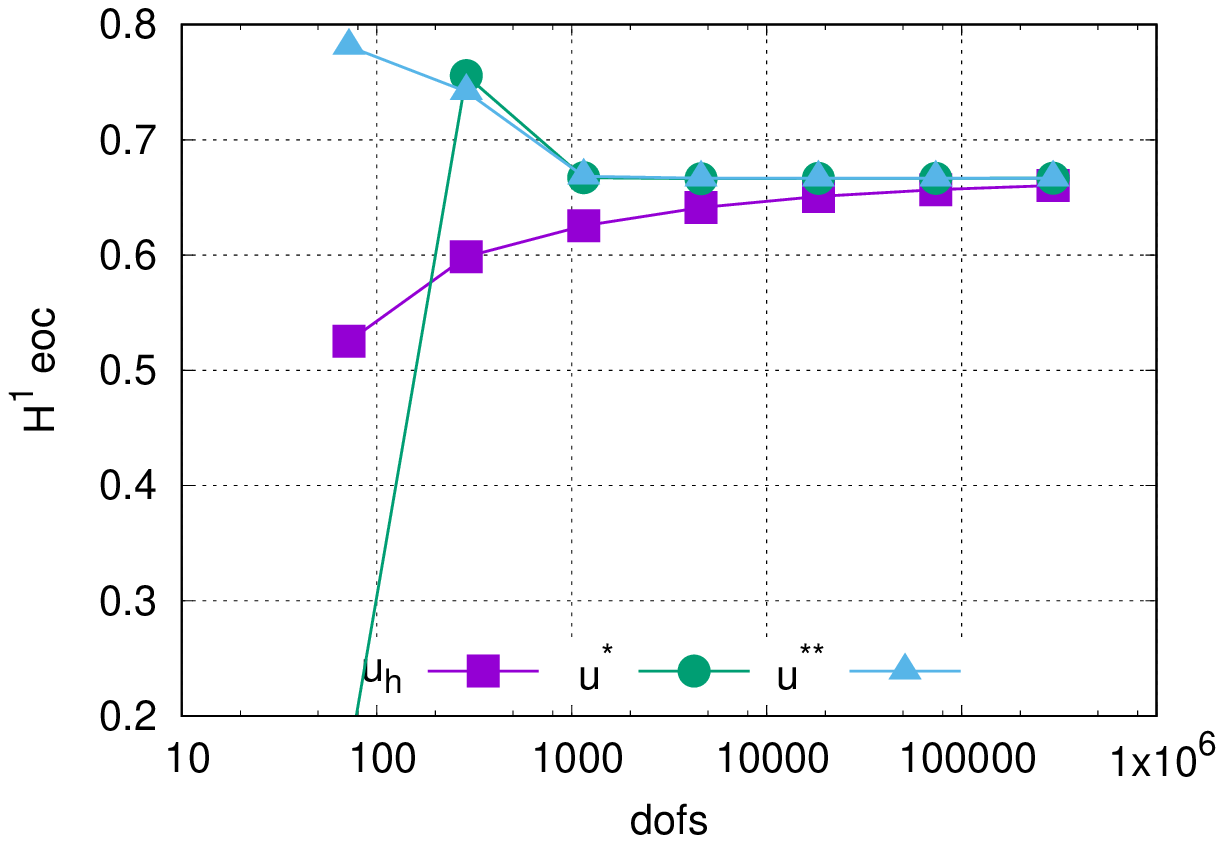}
  \\
  \includegraphics[width=0.45\textwidth]{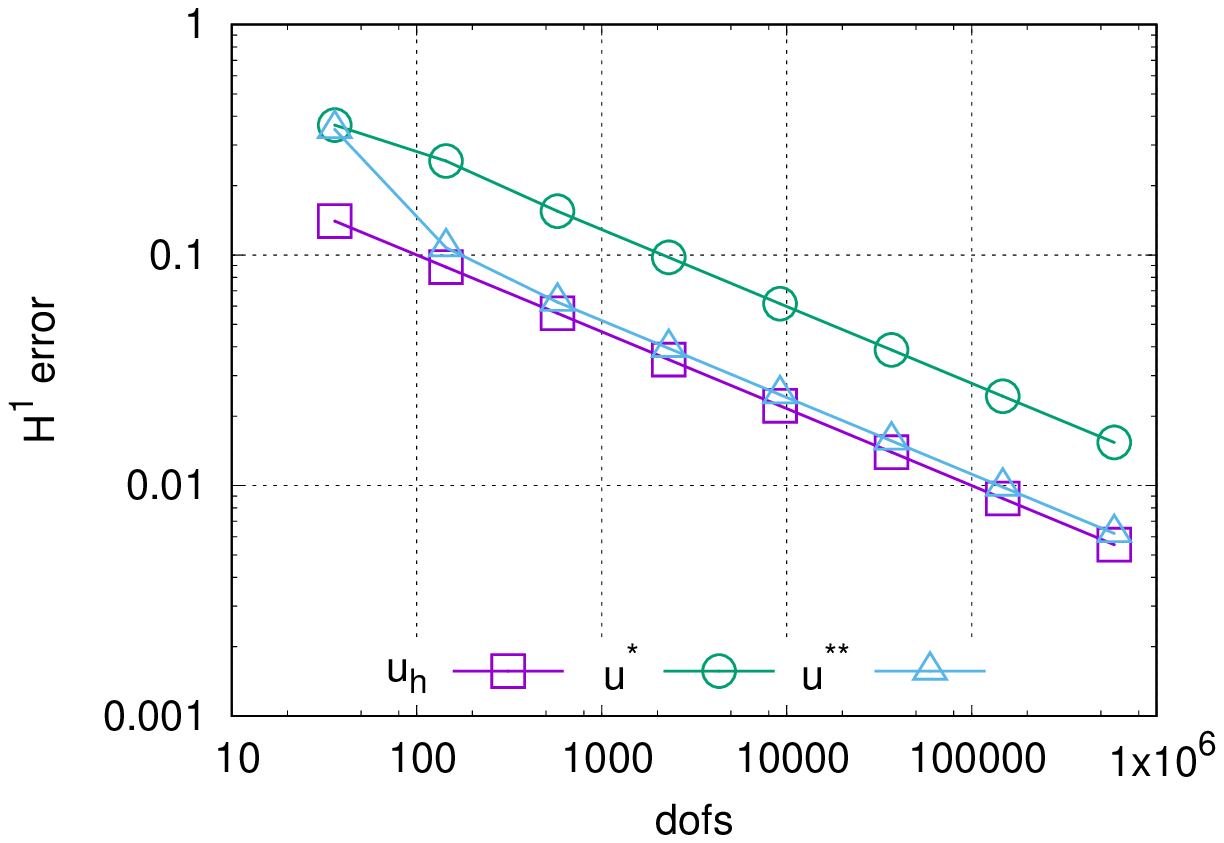}
  \includegraphics[width=0.45\textwidth]{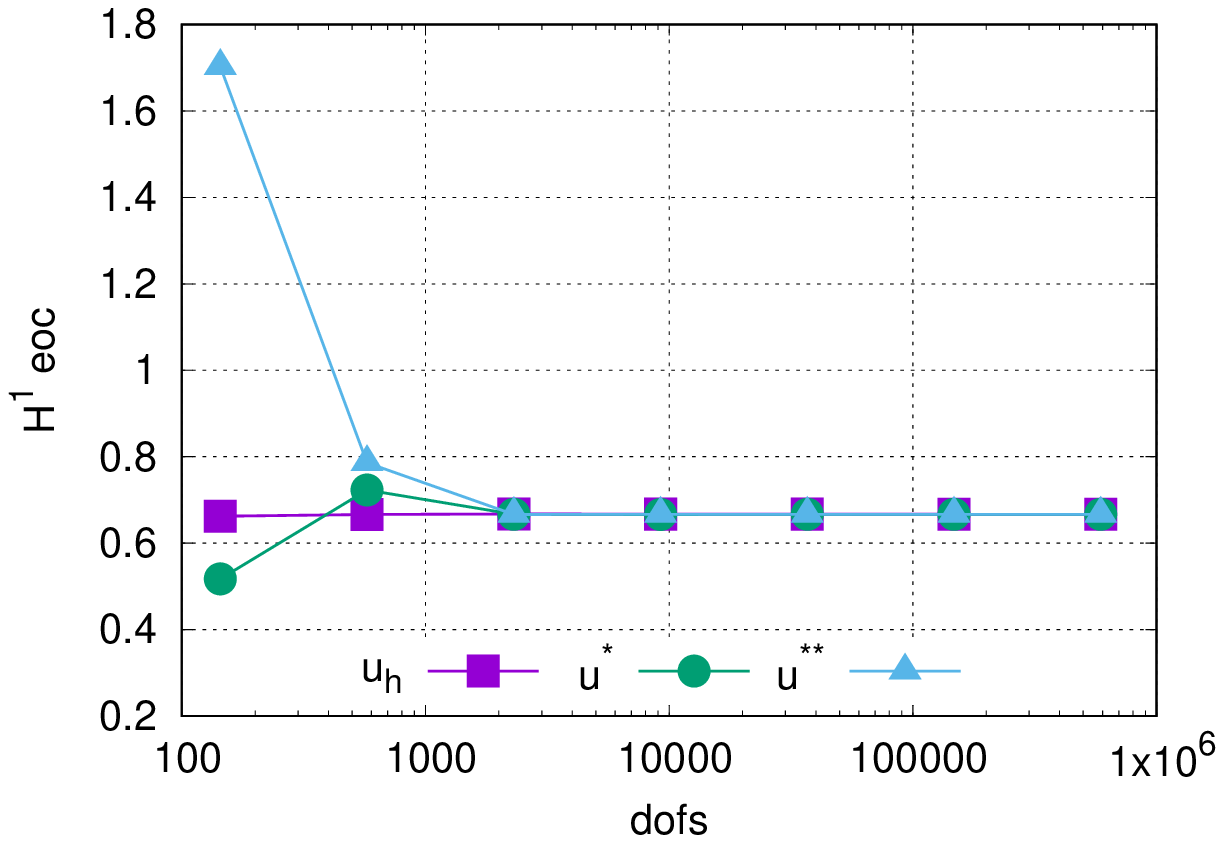}
  \caption{Errors (right) and EOCs (left) for simple corner problem.
  From top to bottom: $\leb2$ with $p=1,2$ and $\sobh1$ with $p=1,2$}
  \label{fig:global2dcorner}
\end{figure}
\begin{figure}
\centering
  \includegraphics[width=0.49\textwidth]{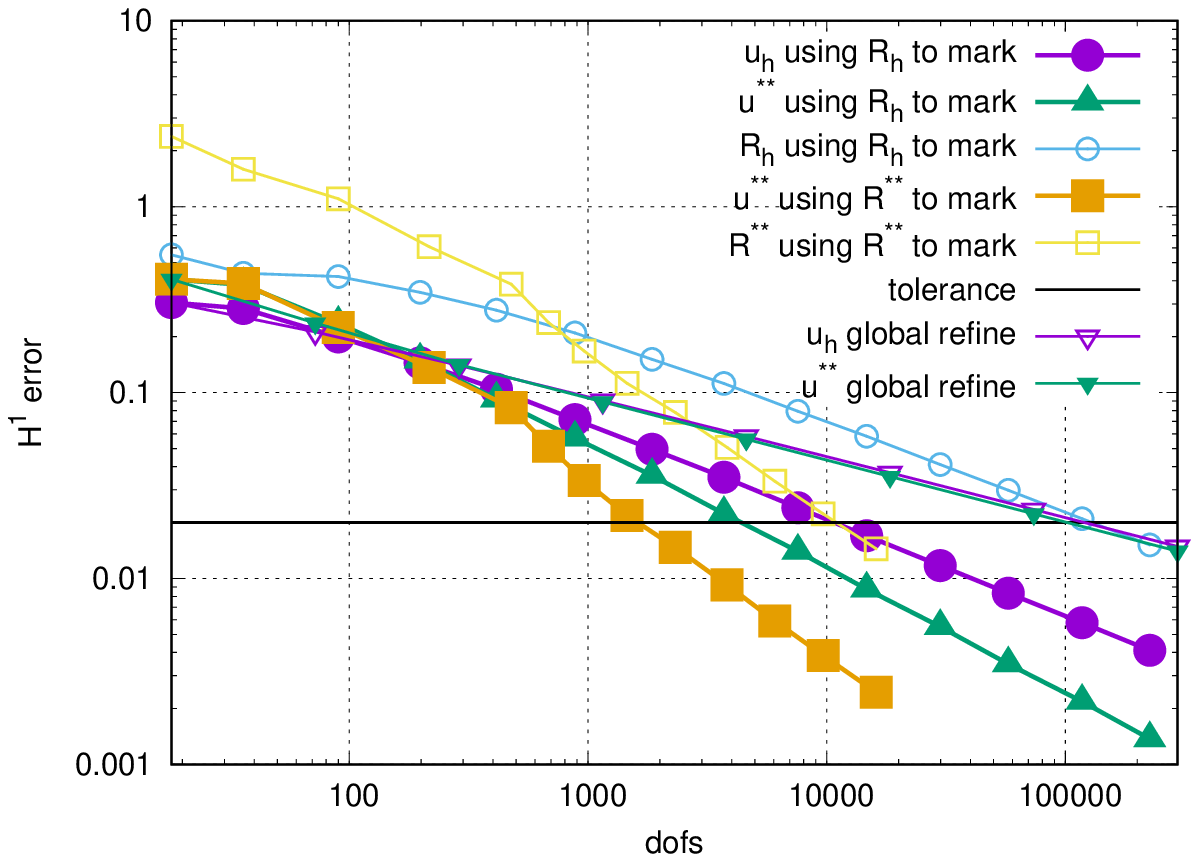}
  \includegraphics[width=0.49\textwidth]{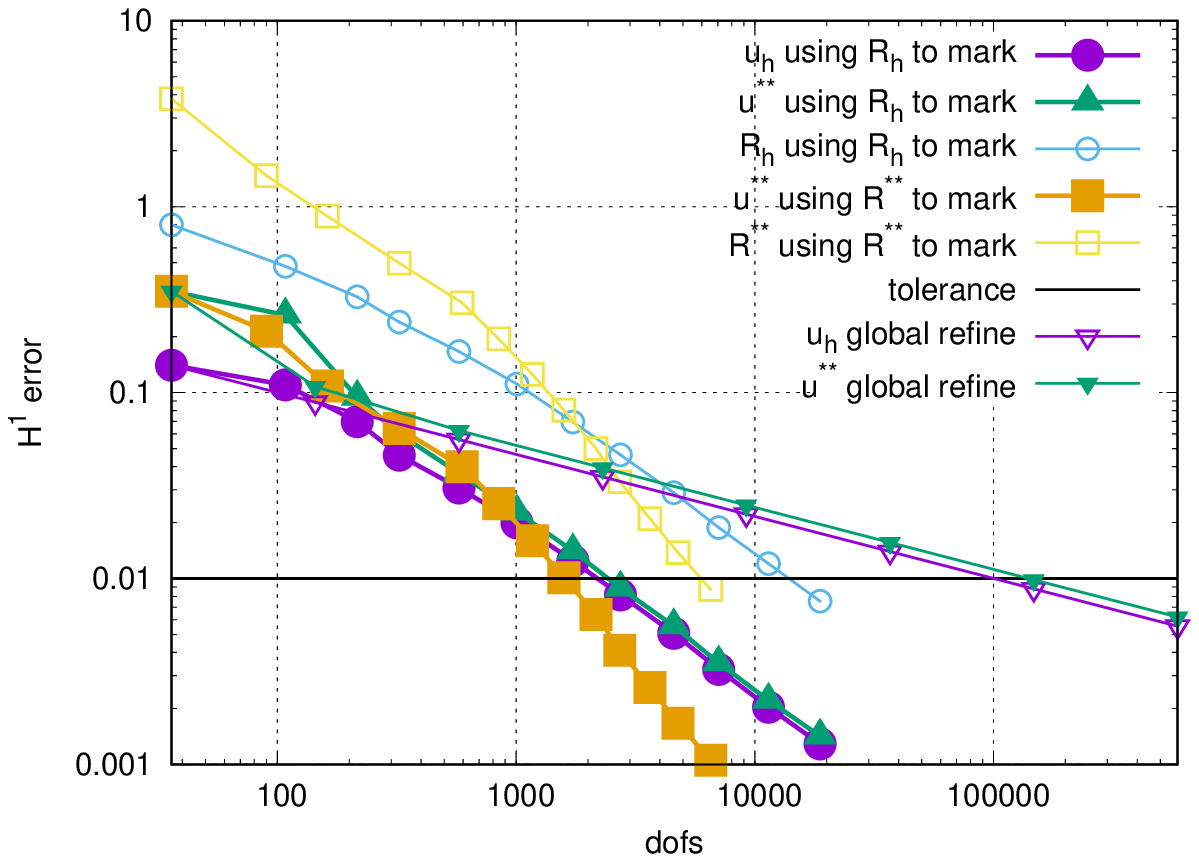}
  \caption{$\sobh1$ errors and residuals for
  simple corner problem with $p=1$ (left) and $p=2$ (right).
  Results are shown for an adaptive mesh using a tolerance of $0.01$.
  }
  \label{fig:adapt2dcorner}
\end{figure}

For a more challenging test, especially for $p=3$, we construct the forcing function
so that the exact solution is $u(x,y)=\omega(x,y)u_{\rm corner}(x,y)$,
where $u_{\rm corner}$ is the solution to the above corner problem and
$\omega(x,y)=-\sin{\frac{3}{2}\pi(1-x^2)(1-y^2)}$. The
function $u$ still has the same corner singularity but is also smooth. However, the challenging nature
of this solution is that it has large
gradients towards the outer boundaries.
Results for $p=2,3$ are summarized in Figures
\ref{fig:global2dextendedcorner} and \ref{fig:adapt2dextendedcorner}.
The final grids for $p=3$ are shown in Figure
\ref{fig:adapt2dextendedcornergrid} using $R_h$ and $R^{**}$ to mark
cells for refinement. In both cases $22$ steps were needed
and the resulting grids have $1597$ and $4540$ cells
($20725$ and $7381$ degrees of freedom), respectively.
While the corner is highly refined in both cases, the regions that are smooth
but strongly varying in their solution are far less refined when using $R^{**}$.
When using $R_h$ the final errors are
$\|u-u_h\|_{dG}\approx 7.4\cdot 10^{-4}$ and
$\|u-\uhss\|_{dG}\approx 6.8\cdot 10^{-4}$
while adaptivity based on
$R^{**}$ results in errors of the size
$\|u-u_h\|_{dG}\approx 3.9\cdot 10^{-3}$ and
$\|u-\uhss\|_{dG}\approx 7.0\cdot 10^{-4}$. Because of   the corner singularity,
using the postprocessor after finishing the refinement (based on $R_h$) does not lead to
a significant improvement while basing the adaptive process on $R^{**}$
leads to an almost identical error while requiring only $35\%$ of the
cells.

\begin{figure}
\centering
  \includegraphics[width=0.45\textwidth]{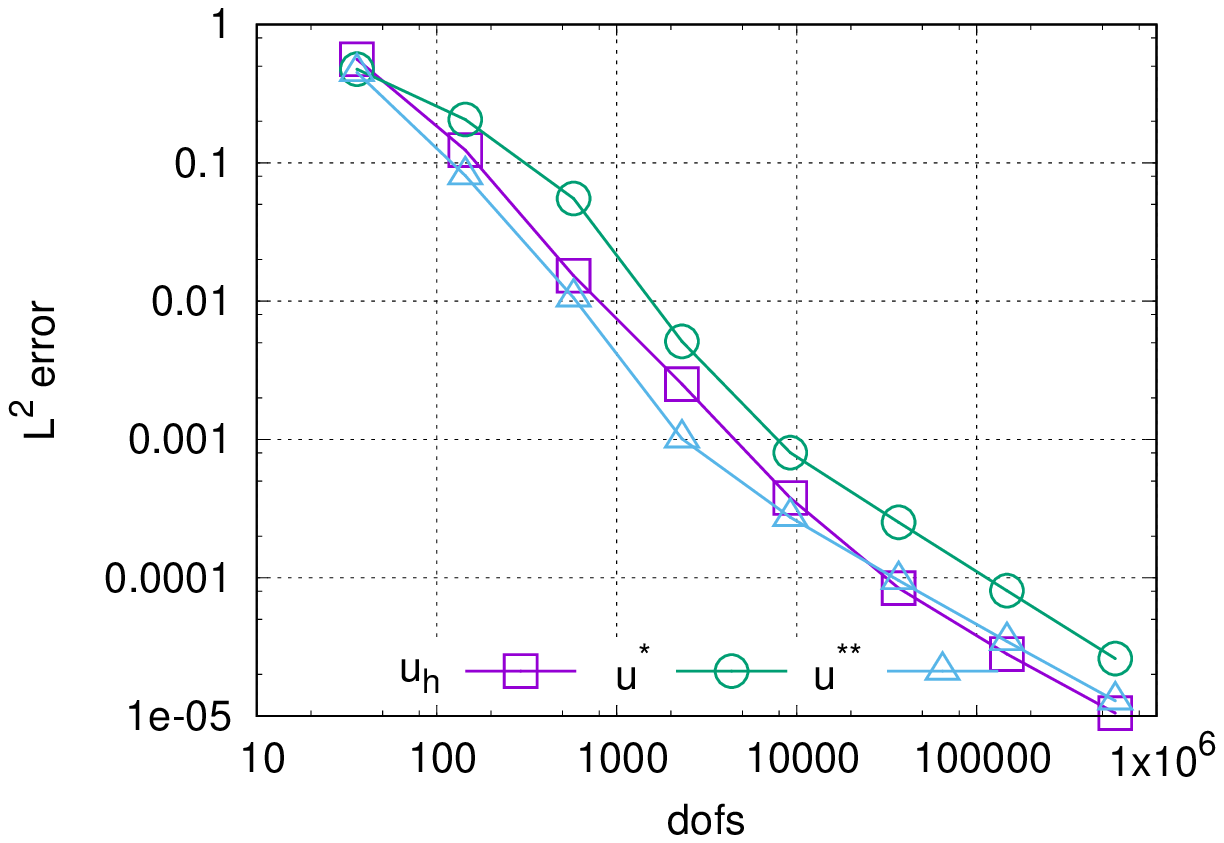}
  \includegraphics[width=0.45\textwidth]{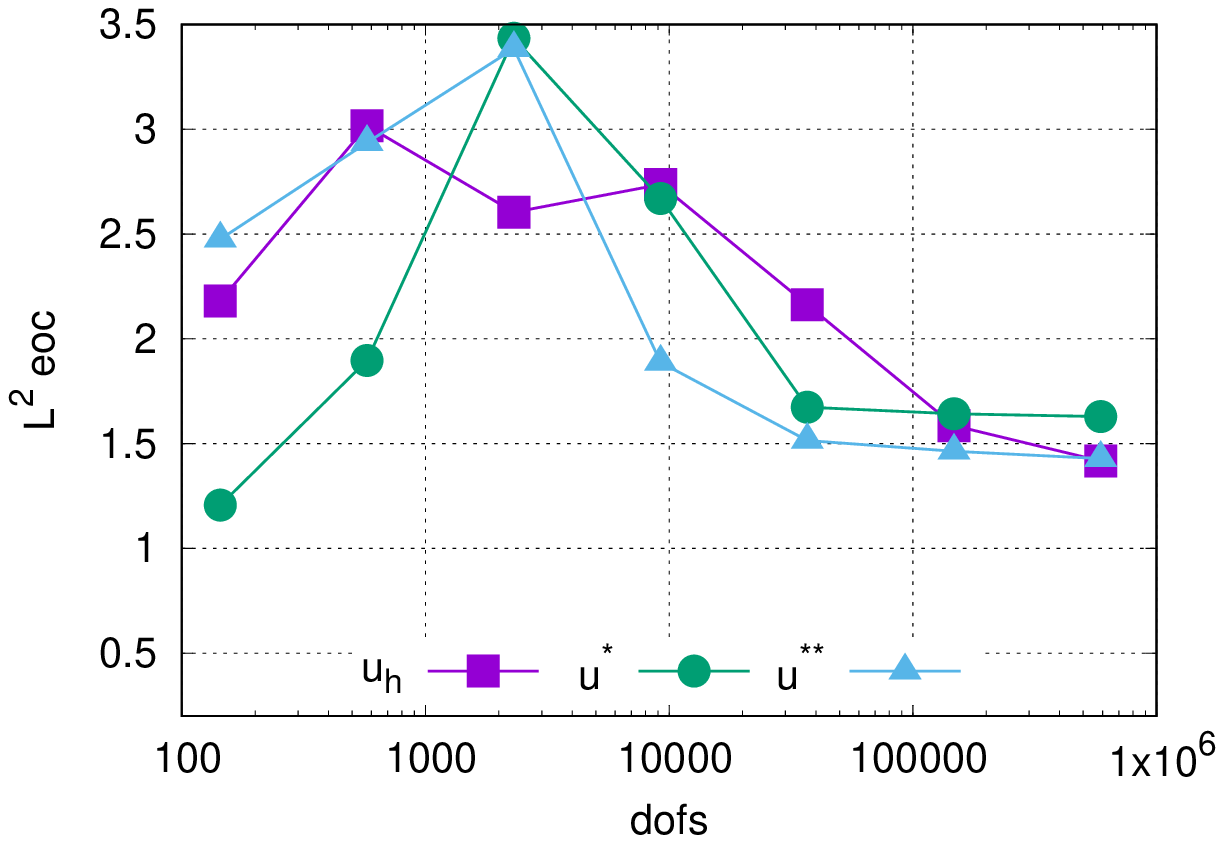}
  \\
  \includegraphics[width=0.45\textwidth]{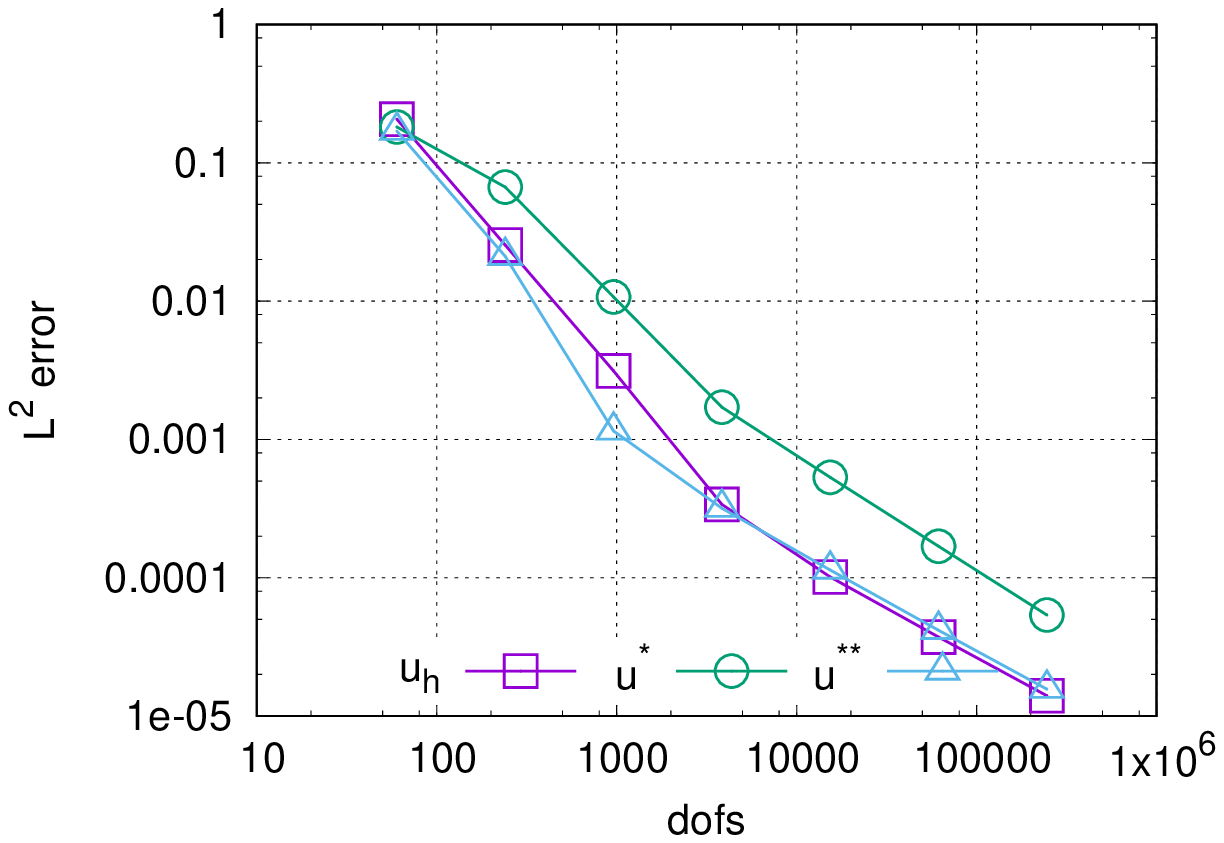}
  \includegraphics[width=0.45\textwidth]{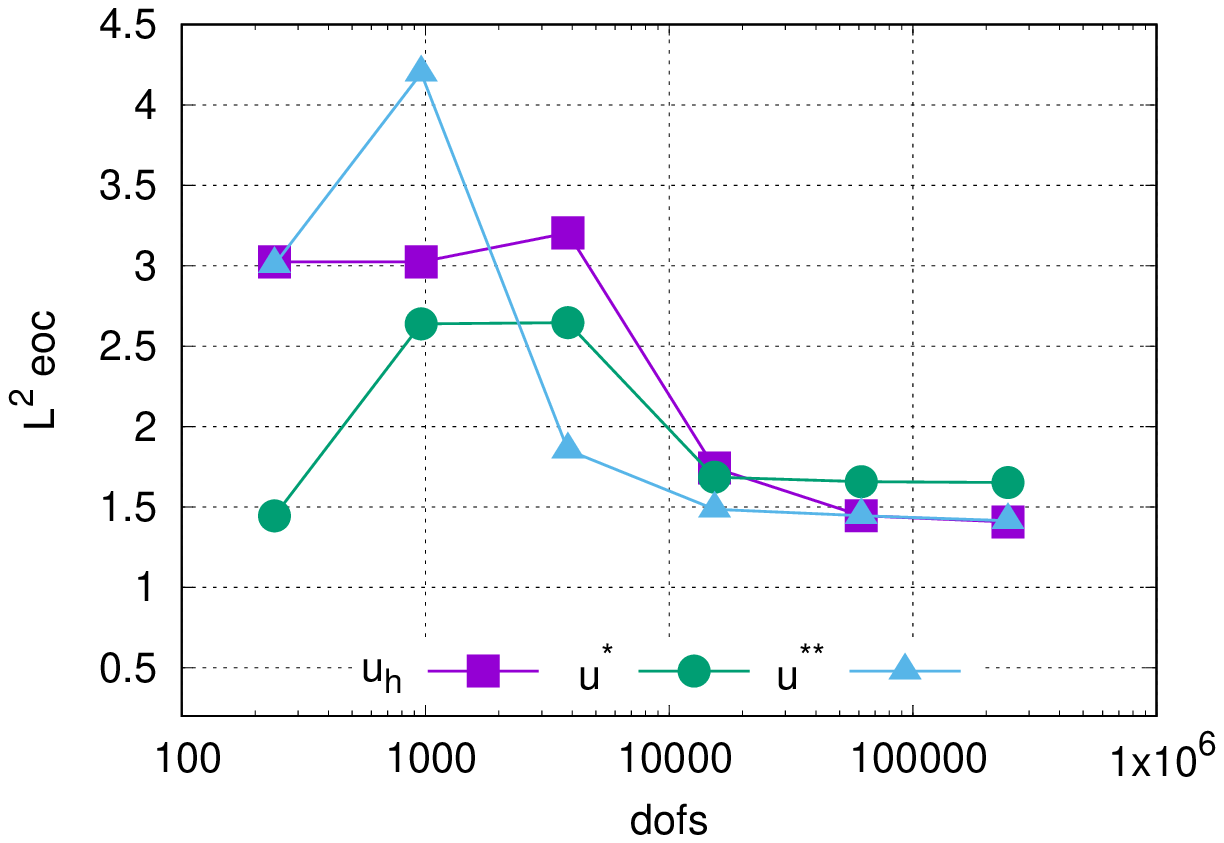}
  \\
  \includegraphics[width=0.45\textwidth]{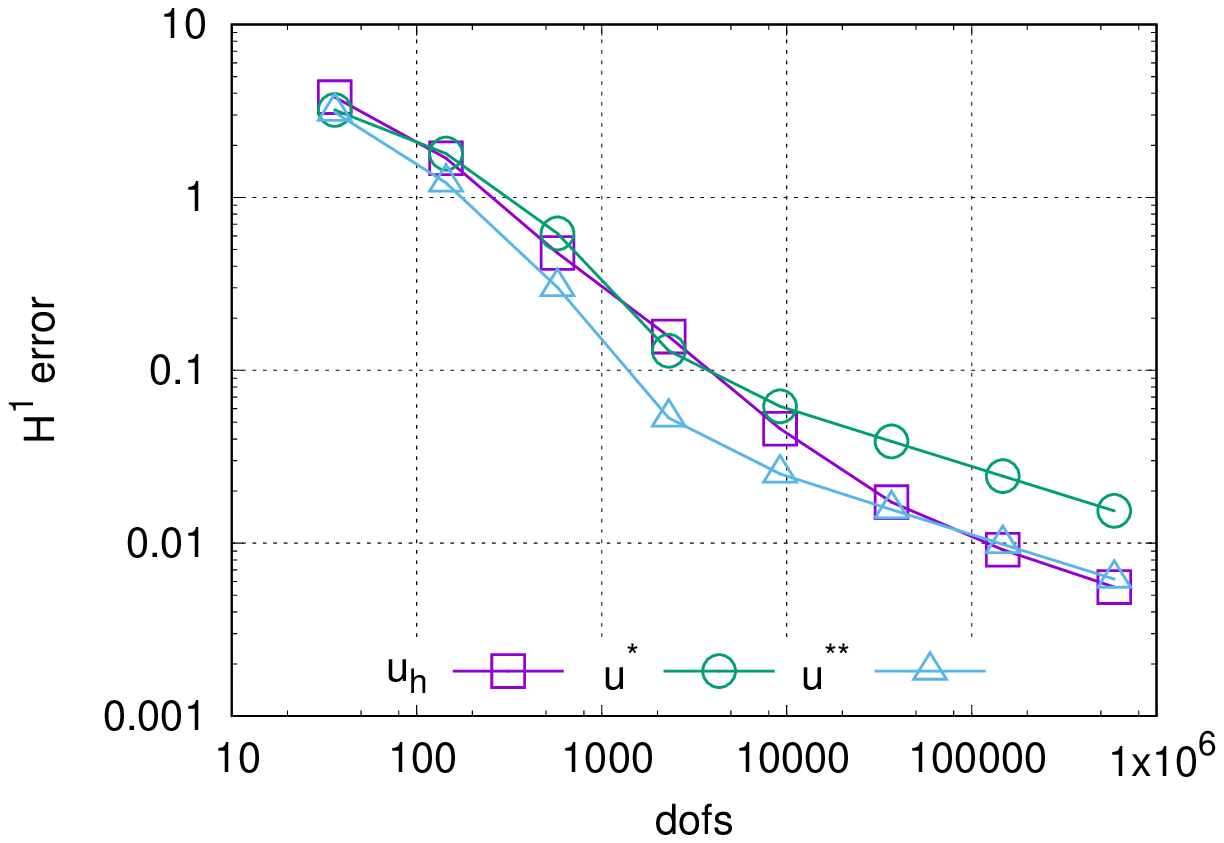}
  \includegraphics[width=0.45\textwidth]{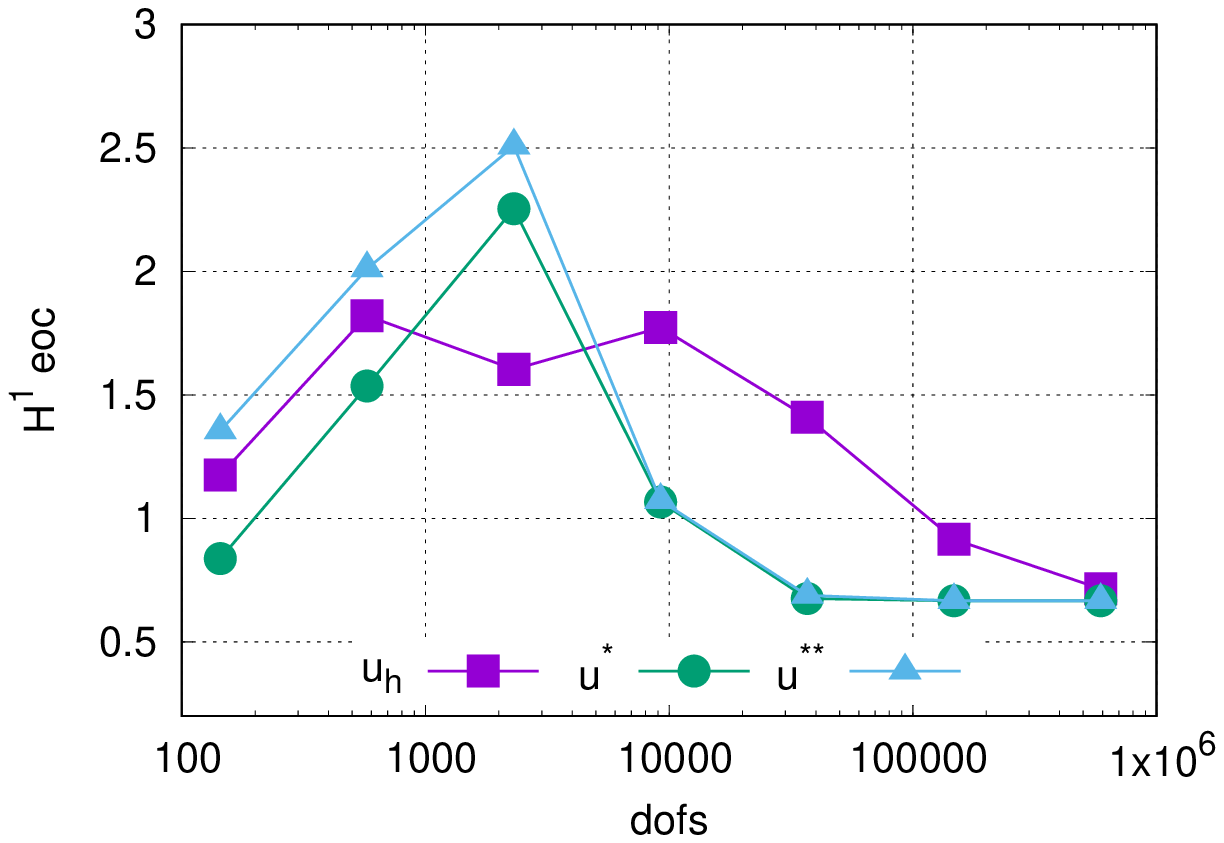}
  \\
  \includegraphics[width=0.45\textwidth]{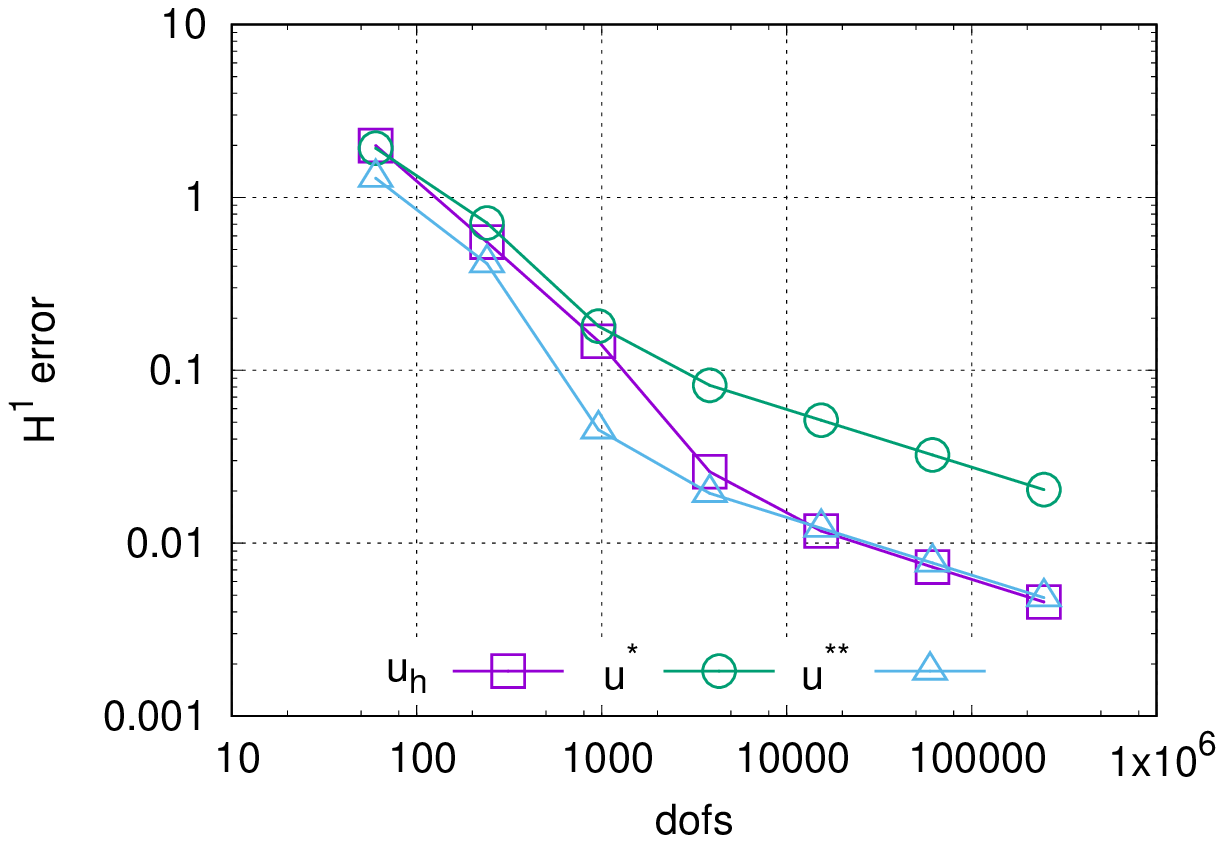}
  \includegraphics[width=0.45\textwidth]{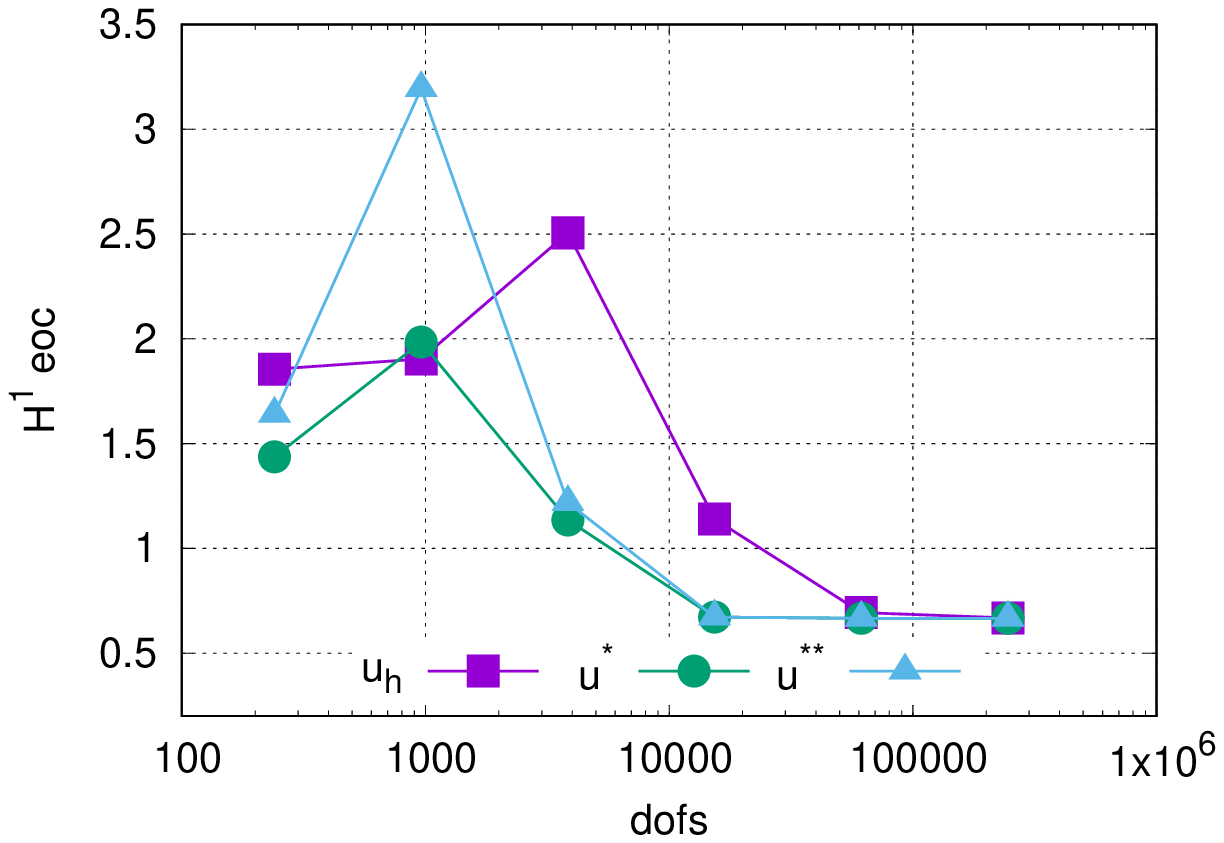}
  \caption{Errors (right) and EOCs (left) for extended corner problem.
  From top to bottom: $\leb2$ with $p=2,3$ and $\sobh1$ with $p=2,3$}
  \label{fig:global2dextendedcorner}
\end{figure}
\begin{figure}
\centering
  \includegraphics[width=0.49\textwidth]{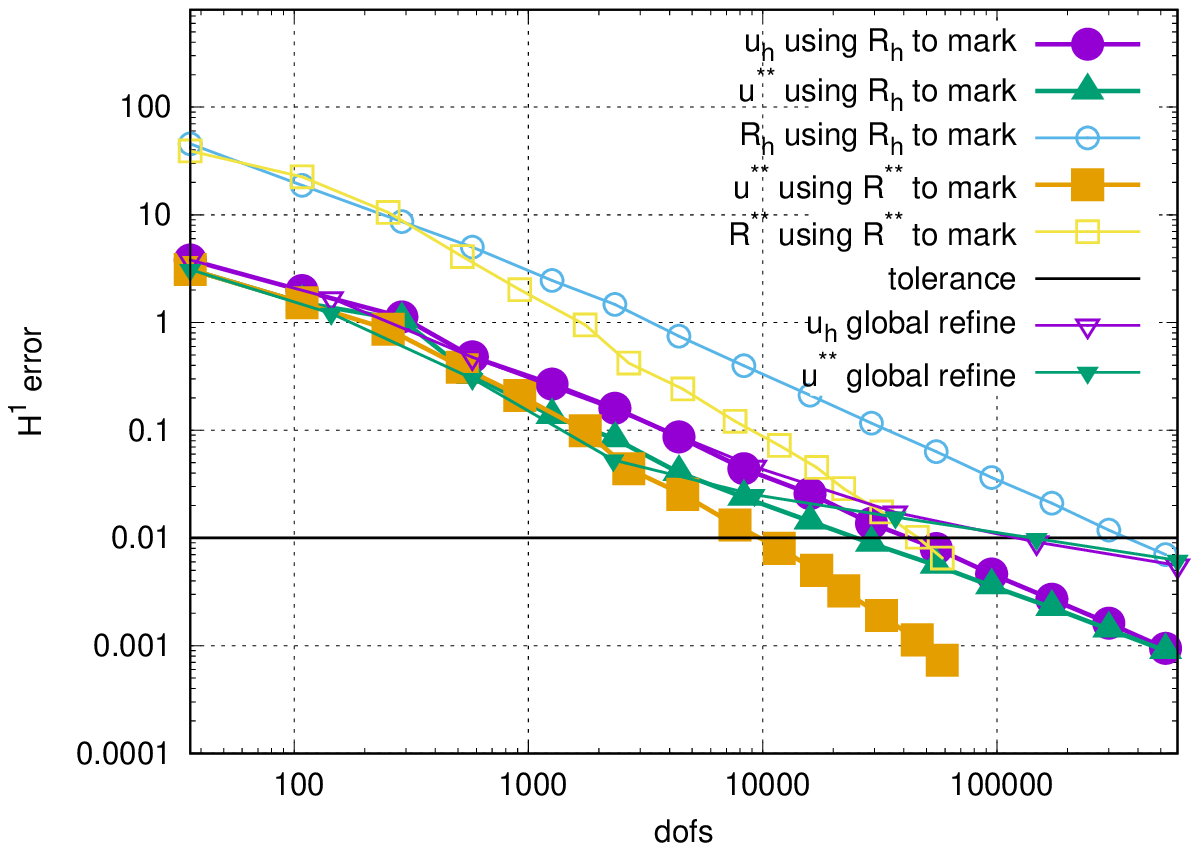}
  \includegraphics[width=0.49\textwidth]{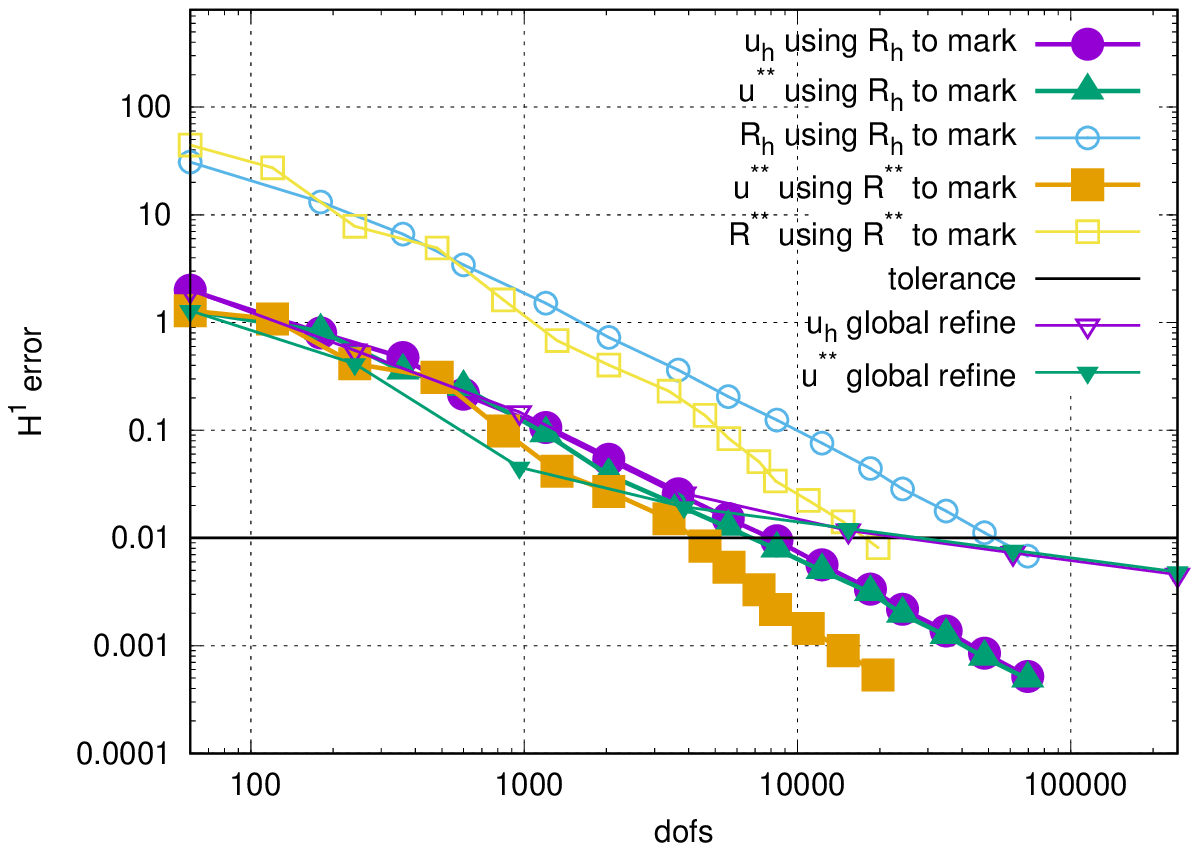}
  \caption{$\sobh1$ errors and residuals for
  extended corner problem with $p=2$ (left) and $p=3$ (right).
  Results are shown for an adaptive mesh using a tolerance of $0.01$.
  }
  \label{fig:adapt2dextendedcornergrid}
\end{figure}

\begin{figure}
\centering
  \includegraphics[width=0.9\textwidth]{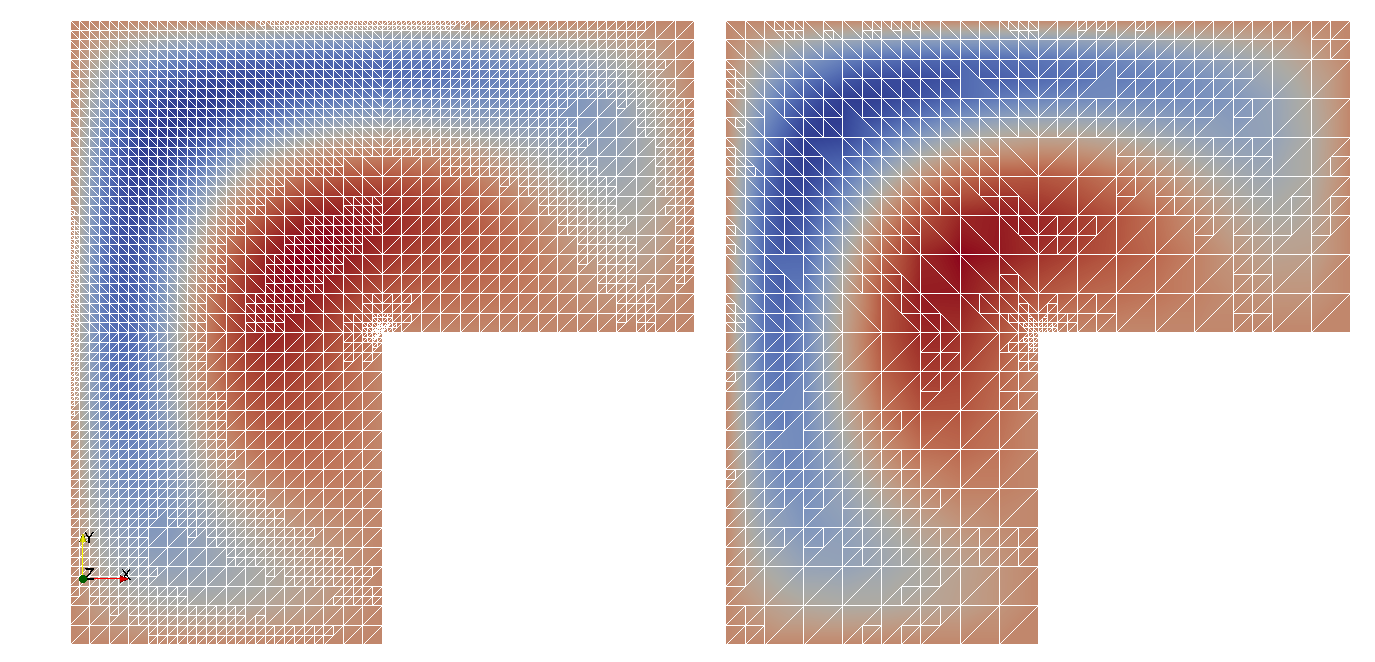}
  \caption{Discrete solution and adapted grid for extended corner problem
  with $p=2$ using $R_h$ (left) and $R^{**}$ (right) for marking.
  The iterate is chosen so that the
  resulting errors satisfty $e_h\approx e^{**}\approx 0.013$.
  }
  \label{fig:adapt2dextendedcorner}
\end{figure}

Figure \ref{fig:global2deffindex} shows the efficiency index for all three
test cases on globally refined grids. The results seem to indicate that
there is only a slight increase in the efficiency index
$\frac{R^{**}}{\|\nabla(\uhss-u)\|}$ compared to
$\frac{R_h}{\|\nabla(u_h-u)\|}$.

\begin{figure}
\centering
  \includegraphics[width=0.32\textwidth]{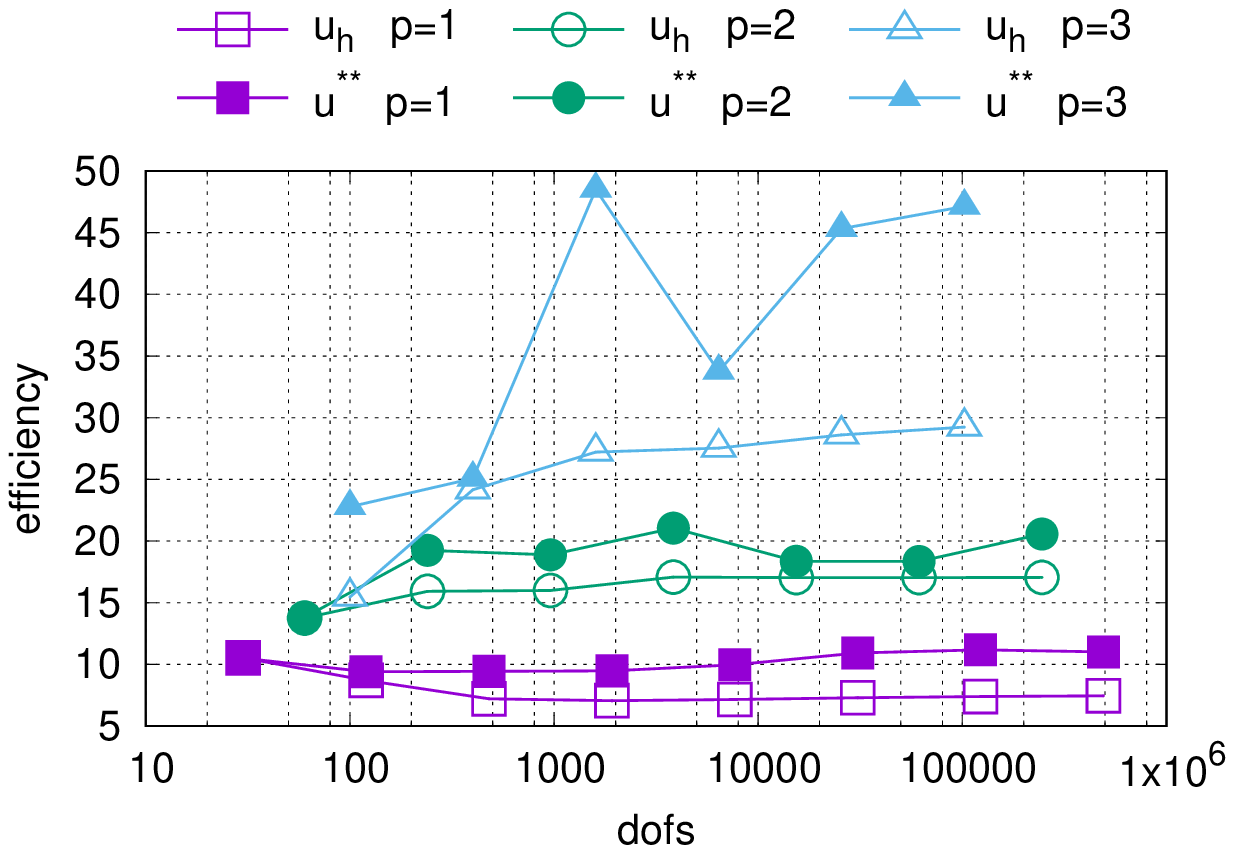}
  \includegraphics[width=0.32\textwidth]{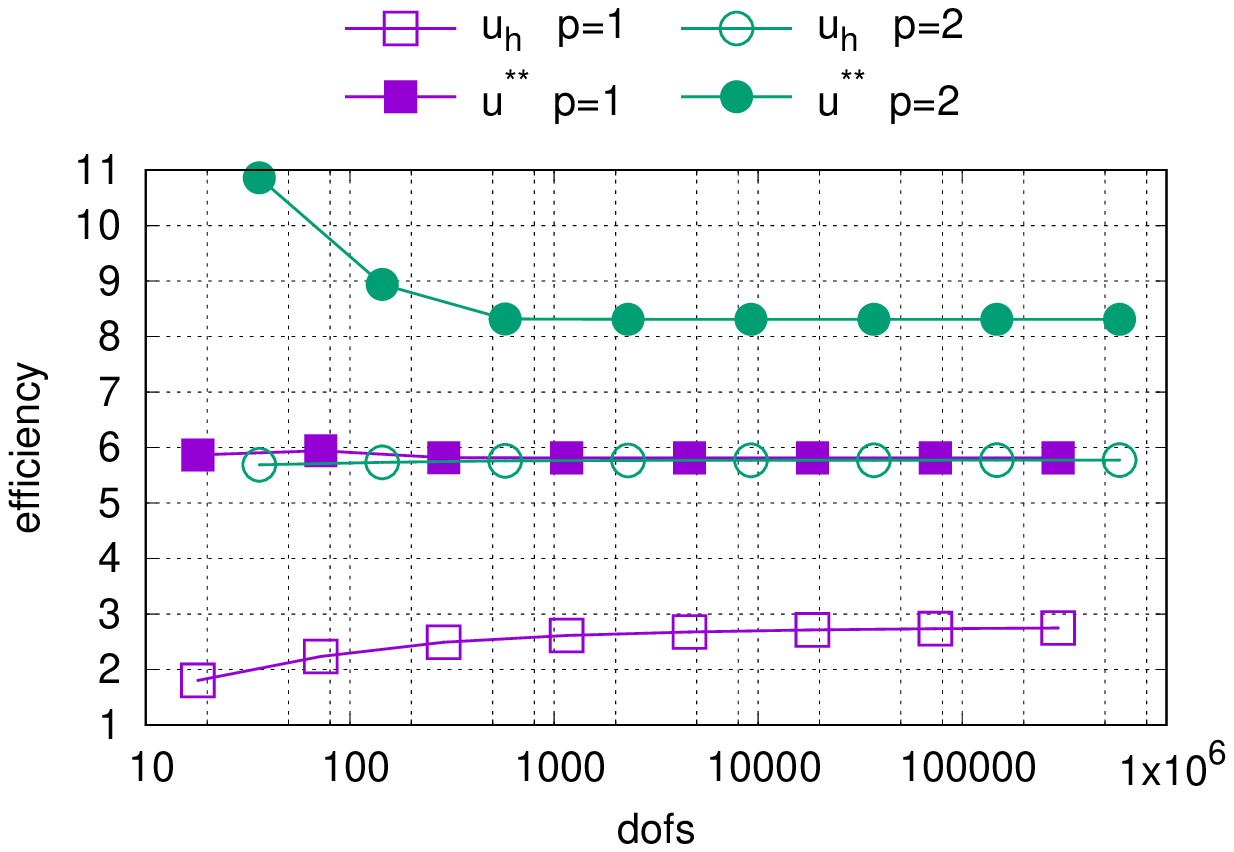}
  \includegraphics[width=0.32\textwidth]{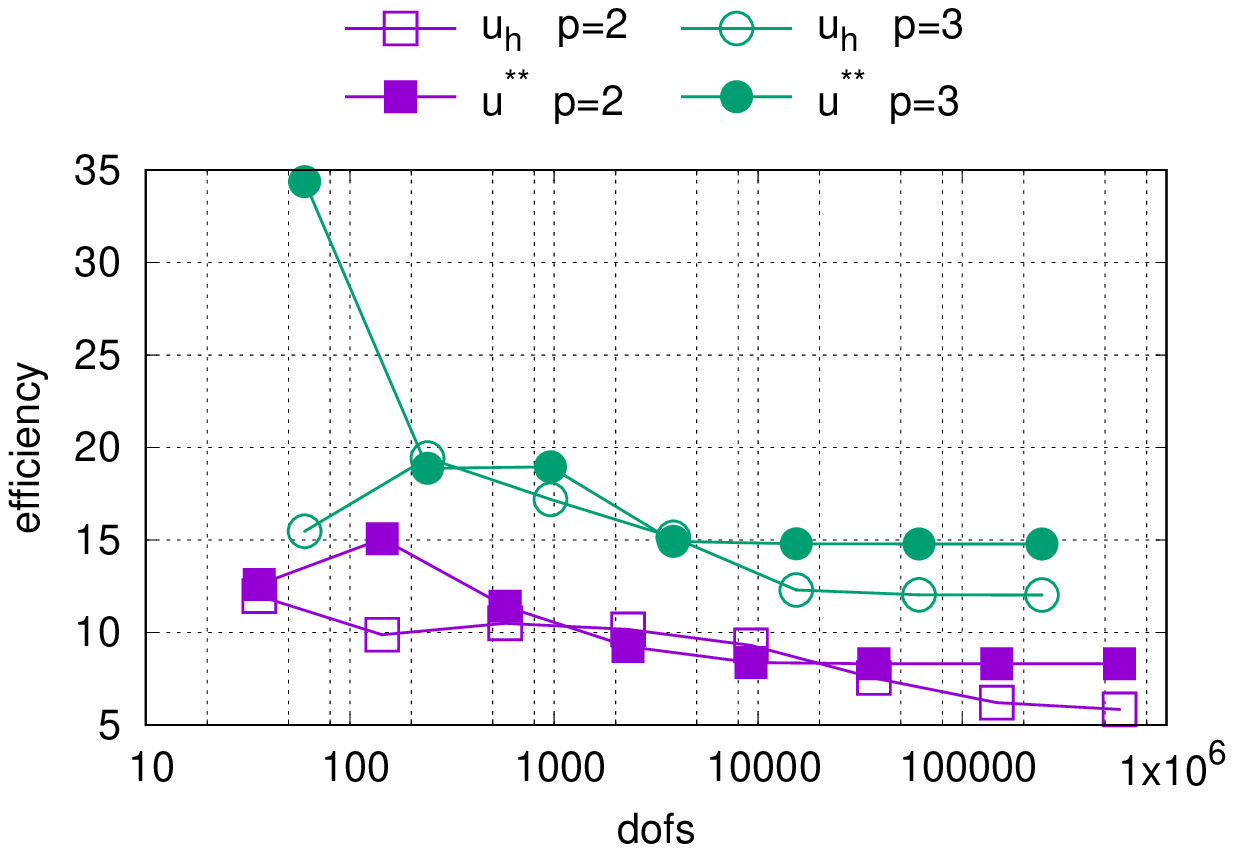}
  \caption{Efficiency index on globally refined grids for
  smooth problem with $p=1,2,3$ (left),
  simple corner problem with $p=1,2$ (middle),
  and extended corner problem with $p=2,3$ (right)}
  \label{fig:global2deffindex}
\end{figure}

\section{Summary Discussion}
In this article, we provide a strategy for improving existing post-processing strategies for 
numerical solutions of a model elliptic problem.
The main idea is to modify the post-processed solution so that it satisfies Galerkin orthogonality. We prove various {\it a priori} type results showing desirable
convergence properties of the orthogonal post-processor including an
increased order of accuracy in the $\leb{2}$ norm.
We supported the
analysis with numerical examples using two types of post-processors --
that of SIAC and SPR -- approximating smooth and non-smooth solutions. 

In addition to the a priori results, we provide a reliable and efficient {\it a
  posteriori} error estimator for the orthogonal post-processed solution, it should be noted tht no such estimator is available for $u - u^*$. 
  We demonstrate in several examples that much more efficient meshes are obtained
when adaptation is based on $R^{**}$ than when  refinement is based on $R_h$ and the post-processor
is only applied to the numerical solution on the final mesh.


\section*{Acknowledgement}
This work was initiated during the authors' stay in Edinburgh with an ICMS ``research-in-group'' grant. J.G. thanks
the German Research Foundation (DFG) for support of the project via DFG grant GI1131/1-1.  Work performed while the fourth author was visiting Heinrich Heine University, D\"{u}sseldorf, Germany and supported by a DAAD fellowship as well as the U.S. Air Force Office of Scientific Research (AFOSR), Computational Mathematics Program, under grant number FA9550-18-1-0486.

\bibliographystyle{alpha}
\bibliography{./tristansbib,./tristanswritings,./ryanbib}

\newcommand{\etalchar}[1]{$^{#1}$}
\begin{thebibliography}{DSRMK17}

\bibitem[ABCM02]{ArnoldBrezziCockburnMarini:2001}
Douglas~N. Arnold, Franco Brezzi, Bernardo Cockburn, and L.~Donatella Marini.
\newblock Unified analysis of discontinuous {G}alerkin methods for elliptic
  problems.
\newblock {\em SIAM J. Numer. Anal.}, 39(5):1749--1779, 2001/02.

\bibitem[AO00]{Ainsworth:2000}
Mark Ainsworth and J.~Tinsley Oden.
\newblock {\em A posteriori error estimation in finite element analysis}.
\newblock Pure and Applied Mathematics (New York). Wiley-Interscience [John
  Wiley \& Sons], New York, 2000.

\bibitem[BBD{\etalchar{+}}08]{BastianBlattDednerEngwerKlofkornKornhuberOhlbergerSander:2008}
P.~Bastian, M.~Blatt, A.~Dedner, C.~Engwer, R.~Kl{\"o}fkorn, R.~Kornhuber,
  M.~Ohlberger, and O.~Sander.
\newblock A generic grid interface for parallel and adaptive scientific
  computing. {II}. {I}mplementation and tests in {DUNE}.
\newblock {\em Computing}, 82(2-3):121--138, 2008.

\bibitem[BMBS95]{benzley1995pre}
Steven~E Benzley, Karl Merkley, Ted~D Blacker, and Larry Schoof.
\newblock Pre-and post-processing for the finite element method.
\newblock {\em Finite elements in analysis and design}, 19(4):243--260, 1995.

\bibitem[BS77]{BrambleSchatz:1977}
J.~H. Bramble and A.~H. Schatz.
\newblock Higher order local accuracy by averaging in the finite element
  method.
\newblock {\em Math. Comp.}, 31(137):94--111, 1977.

\bibitem[CGZZ17]{ChenGuoZhangZou:2017}
Hongtao Chen, Hailong Guo, Zhimin Zhang, and Qingsong Zou.
\newblock A {$C^0$} linear finite element method for two fourth-order
  eigenvalue problems.
\newblock {\em IMA J. Numer. Anal.}, 37(4):2120--2138, 2017.

\bibitem[Dav04]{Davis:2004}
Timothy~A Davis.
\newblock Algorithm 832: Umfpack v4. 3---an unsymmetric-pattern multifrontal
  method.
\newblock {\em ACM Transactions on Mathematical Software (TOMS)},
  30(2):196--199, 2004.

\bibitem[DKNO10]{Dune-Fem}
A.~Dedner, R.~Kl\"{o}fkorn, M.~Nolte, and M.~Ohlberger.
\newblock {A generic interface for parallel and adaptive discretization
  schemes: abstraction principles and the DUNE-FEM module}.
\newblock {\em Computing}, 90:165--196, 2010.

\bibitem[DN18]{Dune-Py}
A.~Dedner and M.~Nolte.
\newblock {The Dune Python Module}.
\newblock {\em ArXiv preprint, https://arxiv.org/abs/1807.05252}, 2018.

\bibitem[DPE12]{Di-PietroErn:2012}
Daniele~Antonio Di~Pietro and Alexandre Ern.
\newblock {\em Mathematical aspects of discontinuous {G}alerkin methods},
  volume~69 of {\em Math\'ematiques \& Applications (Berlin) [Mathematics \&
  Applications]}.
\newblock Springer, Heidelberg, 2012.

\bibitem[DSRMK17]{Docampo-SanchezRyanMirzargarKirby:2017}
Julia Docampo-S{\'a}nchez, Jennifer~K Ryan, Mahsa Mirzargar, and Robert~M
  Kirby.
\newblock Multi-dimensional filtering: reducing the dimension through rotation.
\newblock {\em SIAM Journal on Scientific Computing}, 39(5):A2179--A2200, 2017.

\bibitem[GP18]{georgoulis2018recovered}
Emmanuil~H Georgoulis and Tristan Pryer.
\newblock Recovered finite element methods.
\newblock {\em Computer methods in applied mechanics and engineering},
  332:303--324, 2018.

\bibitem[GZZ18]{GuoZhangZou:2018}
Hailong Guo, Zhimin Zhang, and Qingsong Zou.
\newblock A {$C^0$} linear finite element method for biharmonic problems.
\newblock {\em J. Sci. Comput.}, 74(3):1397--1422, 2018.

\bibitem[HPS04]{HoustonPerugiaSchoetzau:2004}
Paul Houston, Ilaria Perugia, and Dominik Sch\"{o}tzau.
\newblock Mixed discontinuous {G}alerkin approximation of the {M}axwell
  operator.
\newblock {\em SIAM J. Numer. Anal.}, 42(1):434--459, 2004.

\bibitem[HSW05]{HoustonSchoetzauWihler:2005}
Paul Houston, Dominik Sch\"{o}tzau, and Thomas~P. Wihler.
\newblock Energy norm a posteriori error estimation for mixed discontinuous
  {G}alerkin approximations of the {S}tokes problem.
\newblock {\em J. Sci. Comput.}, 22/23:347--370, 2005.

\bibitem[JXR12]{convdiff}
Liangyue Ji, Yan Xu, and Jennifer~K. Ryan.
\newblock Accuracy enhancement of the linear convection-diffusion equation in
  multiple dimensions.
\newblock {\em Mathematics of Computation}, 81:1929--1950, 2012.

\bibitem[KP03]{KarakashianPascal:2003}
O.A. Karakashian and F.~Pascal.
\newblock A posteriori error estimates for a discontinuous {G}alerkin
  approximation of second-order elliptic problems.
\newblock {\em SIAM J. Numer. Anal.}, 41(6):2374--2399 (electronic), 2003.

\bibitem[LP12]{LakkisPryer:2011c}
Omar Lakkis and Tristan Pryer.
\newblock Gradient recovery in adaptive methods for parabolic problems.
\newblock {\em IMA J. Numer. Anal.}, 32(1):246--278, 2012.

\bibitem[LW06]{Leykekhman:2006}
Dmitriy Leykekhman and Lars Wahlbin.
\newblock A posteriori error estimates by recovered gradients in parabolic
  finite element equations.
\newblock Technical report, University of Texas, Austin, 2006.
\newblock Preprint (submitted to Math. Comp.).

\bibitem[{Mir}12]{mirzaee2012}
{Mirzaee, Hanieh and Ryan, {Jennifer K.} and Kirby, {Robert M.}}
\newblock {E}fficient {I}mplementation of {S}moothness-{I}ncreasing
  {A}ccuracy-{C}onserving ({S}{I}{A}{C}) {F}ilters for {D}iscontinuous
  {G}alerkin {S}olutions.
\newblock {\em Journal of Scientific Computing}, 52(1):85--112, 7 2012.

\bibitem[MJRK11]{StrucTri}
Hanieh Mirzaee, Liangyue Ji, Jennifer~K. Ryan, and R.M. Kirby.
\newblock Smoothness-{I}ncreasing {A}ccuracy-{C}onserving ({SIAC})
  post-processing for discontinuous {G}alerkin solutions over structured
  triangular meshes.
\newblock {\em SIAM Journal on Numerical Analysis}, 49:1899--1920, 2011.

\bibitem[MJRK17]{Mahsa}
Mahsa Mirzargar, Ashok Jallepalli, Jennifer~K. Ryan, and Robert~M. Kirby.
\newblock Hexagonal smoothness-increasing accuracy-conserving filtering.
\newblock {\em Journal of Scientific Computing}, 73(2):1072--1093, Dec 2017.

\bibitem[MKRK13]{SIACunstrct}
Hanieh Mirzaee, James King, Jennifer~K. Ryan, and R.M. Kirby.
\newblock Smoothness-{I}ncreasing {A}ccuracy-{C}onserving ({SIAC}) {F}ilters
  for {D}iscontinuous {G}alerkin {S}olutions over {U}nstructured {T}riangular
  {M}eshes.
\newblock {\em SIAM Journal on Scientific Computing}, 35:A212--A230, 2013.

\bibitem[MRK10]{MirzaeeRyanKirby:2010}
Hanieh Mirzaee, Jennifer~K Ryan, and Robert~M Kirby.
\newblock Quantification of errors introduced in the numerical approximation
  and implementation of smoothness-increasing accuracy conserving (siac)
  filtering of discontinuous galerkin (dg) fields.
\newblock {\em Journal of Scientific Computing}, 45(1-3):447--470, 2010.

\bibitem[MRK14]{TetMesh}
Hanieh Mirzaee, Jennifer~K. Ryan, and Robert~M. Kirby.
\newblock Smoothness-increasing accuracy-conserving (siac) filters for
  discontinuous galerkin solutions: Application to structured tetrahedral
  meshes.
\newblock {\em Journal of Scientific Computing}, 58(3):690--704, 2014.

\bibitem[MRK16]{MirzargarRyanKirby:2016}
Mahsa Mirzargar, Jennifer~K Ryan, and Robert~M Kirby.
\newblock Smoothness-increasing accuracy-conserving (siac) filtering and
  quasi-interpolation: a unified view.
\newblock {\em Journal of Scientific Computing}, 67(1):237--261, 2016.

\bibitem[Ova07]{Ovall:2007}
Jeffrey Ovall.
\newblock Function, gradient and hessian recovery using quadratic edge-bump
  functions.
\newblock {\em J. Sci. Comput.}, 45(3):1064--1080, 2007.

\bibitem[{Rya}05]{ryan2005}
{Ryan, Jennifer K. and Shu, Chi-Wang and Atkins, Harold}.
\newblock {{E}xtension of a {P}ostprocessing {T}echnique for the
  {D}iscontinuous {G}alerkin {M}ethod for {H}yperbolic {E}quations with
  {A}pplication to an {A}eroacoustic {P}roblem}.
\newblock {\em {SIAM Journal on Scientific Computing}}, {26}({3}):{821--843},
  {2005}.

\bibitem[{Rya}09]{ryan2009}
{Ryan, Jennifer K. and Cockburn, Bernardo}.
\newblock {{L}ocal {D}erivative {P}ost-{P}rocessing for the {D}iscontinuous
  {G}alerkin {M}ethod}.
\newblock {\em {Journal of Computational Physics}}, {228}({23}):{8642--8664},
  {2009}.

\bibitem[{Rya}15]{ryan2015}
{Ryan, Jennifer K.}
\newblock {\em {E}xploiting {S}uperconvergence through
  {S}moothness-{I}ncreasing {A}ccuracy-{C}onserving ({S}{I}{A}{C})
  {F}iltering}, pages 87--102.
\newblock Springer International Publishing, Cham, 2015.

\bibitem[SCKR08]{Steffanetal}
Michael Steffan, Sean Curtis, Robert~M. Kirby, and Jennifer~K. Ryan.
\newblock Investigation of smoothness enhancing accuracy-conserving filters for
  improving streamline integration through discontinuous fields.
\newblock {\em IEEE-TVCG}, 14:680--692, 2008.

\bibitem[Tho77]{Thomee1977}
Vidar Thom{\'e}e.
\newblock High order local approximations to derivatives in the finite element
  method.
\newblock {\em Mathematics of Computation}, 31(139):652--660, 1977.

\bibitem[Ver96]{Verfurth:1996}
R{\"u}diger Verf{\"u}rth.
\newblock {\em A Review of {\em A Posteriori} Error Estimation and Adaptive
  Mesh-Refinement Techniques}.
\newblock Wiley-Teubner, Chichester-Stuttgart, 1996.

\bibitem[WRKH09]{WalfischRyanKirbyHaimes:2009}
David Walfisch, Jennifer~K Ryan, Robert~M Kirby, and Robert Haimes.
\newblock One-sided smoothness-increasing accuracy-conserving filtering for
  enhanced streamline integration through discontinuous fields.
\newblock {\em Journal of Scientific Computing}, 38(2):164--184, 2009.

\bibitem[ZN05]{ZhangNaga:2005}
Z.~Zhang and A.~Naga.
\newblock A new finite element gradient recovery method: superconvergence
  property.
\newblock {\em SIAM J. Sci. Comput}, 26:1192--1213, 2005.

\bibitem[ZZ87]{Zienkiewicz:1987}
O.~C. Zienkiewicz and J.~Z. Zhu.
\newblock A simple error estimator and adaptive procedure for practical
  engineering analysis.
\newblock {\em Internat. J. Numer. Methods Engrg.}, 24(2):337--357, 1987.

\bibitem[ZZ92]{ZienkiewiczZhu}
O.~C. Zienkiewicz and J.~Z. Zhu.
\newblock The superconvergent patch recovery and a posteriori error estimates.
  part 1: The recovery technique.
\newblock {\em International Journal for Numerical Methods in Engineering},
  33(7):1331--1364, 1992.

\bibitem[ZZ95]{ZhangZhu:1995}
Zhimin Zhang and J.~Z. Zhu.
\newblock Superconvergence of the derivative patch recovery technique and a
  posteriori error estimation.
\newblock In {\em Modeling, mesh generation, and adaptive numerical methods for
  partial differential equations ({M}inneapolis, {MN}, 1993)}, volume~75 of
  {\em IMA Vol. Math. Appl.}, pages 431--450. Springer, New York, 1995.

\bibitem[ZZ98]{ZHANG1998159}
Zhimin Zhang and J.Z. Zhu.
\newblock Analysis of the superconvergent patch recovery technique and a
  posteriori error estimator in the finite element method (ii).
\newblock {\em Computer Methods in Applied Mechanics and Engineering},
  163(1):159 -- 170, 1998.

\end{thebibliography}

\end{document}